%
\documentclass[12pt, reqno]{amsart}
\usepackage{amsmath, amsthm, amscd, amsfonts, amssymb, graphicx, color, mathrsfs}
\usepackage[bookmarksnumbered, colorlinks, plainpages]{hyperref}
\usepackage{cite}
\usepackage[all]{xy}
\usepackage{slashed}
\usepackage{tikz-cd}
\usepackage{mathabx}
\usepackage{tipa}
\usepackage{soul}
\usepackage{cancel}
\usepackage{ulem}
\usepackage{algorithm}
\usepackage{algpseudocode}
\usepackage{float}

\textheight 22.5truecm \textwidth 14.5truecm
\setlength{\oddsidemargin}{0.35in}\setlength{\evensidemargin}{0.35in}

\setlength{\topmargin}{-.5cm}


\setlength{\textwidth}{15.2cm}
\setlength{\textheight}{22.7cm}
\setlength{\topmargin}{0mm}
\setlength{\oddsidemargin}{3mm}
\setlength{\evensidemargin}{3mm}
\setlength{\footskip}{1cm}


\newtheorem{theorem}{Theorem}[section]
\newtheorem{lemma}[theorem]{Lemma}

\newtheorem{proposition}[theorem]{Proposition}
\newtheorem{corollary}[theorem]{Corollary}

\theoremstyle{definition}

\theoremstyle{remark}
\newtheorem{remark}[theorem]{Remark}
\numberwithin{equation}{section}

\begin{document}
\setcounter{page}{1}

\title[Strichartz estimates on compact Lie groups]{On the sharpness of Strichartz estimates and spectrum of compact Lie groups}

\author[D. Cardona]{Duv\'an Cardona}
\address{
  Duv\'an Cardona:
  \endgraf
  Department of Mathematics: Analysis, Logic and Discrete Mathematics
  \endgraf
  Ghent University, Belgium
  \endgraf
  {\it E-mail address} {\rm duvan.cardonasanchez@ugent.be}
  }

  \author[B. Grajales]{Brian Grajales}
\address{
  Brian Grajales:
  \endgraf
  Department of Mathematics
  \endgraf
  University of Campinas, Brazil
  \endgraf
  {\it E-mail address} {\rm grajales@ime.unicamp.br}
  }

\author[M. Ruzhansky]{Michael Ruzhansky}
\address{
  Michael Ruzhansky:
  \endgraf
  Department of Mathematics: Analysis, Logic and Discrete Mathematics
  \endgraf
  Ghent University, Belgium
  \endgraf
 and
  \endgraf
  School of Mathematical Sciences
  \endgraf
  Queen Mary University of London
  \endgraf
  United Kingdom
  \endgraf
  {\it E-mail address} {\rm michael.ruzhansky@ugent.be}
  }

\thanks{The authors were supported  by the FWO  Odysseus  1  grant  G.0H94.18N:  Analysis  and  Partial Differential Equations, by the Methusalem programme of the Ghent University Special Research Fund (BOF)
(Grant number 01M01021). B. Grajales has been partially supported by grant 2023/04083-0 (São Paulo Research Foundation FAPESP).  M. Ruzhansky is also supported  by EPSRC grant 
EP/R003025/2.
}

     \keywords{ Strichartz estimate, compact Lie group, Waring problem, Vinogradov's conjecture, analytic number theory}
     \subjclass[2020]{35S30, 42B20; Secondary 42B37, 42B35}

\begin{abstract} We prove Strichartz estimates on any compact connected simple  Lie group. In the diagonal case of Bourgain's exponents $p=q,$ we provide a new  regularity order $s_{0}^{R}(p)$ in the sense that our (reverse) Strichartz estimates are valid when   $s> s_{0}^{R}(p)$ and when $p\rightarrow 2^{+}.$ As expected our Sobolev index satisfies the estimate $ s_{0}^{R}(p)>s_{0}(d)=\frac{d}{2}-\frac{d+2}{p}.$ Motivated by the recent progress in the field, in the spirit of the analytical number theory methods developed by Bourgain in the analysis of periodic Schr\"odinger equations, we link the problem of finding Strichartz estimates on compact Lie groups with the problem of counting the number of representations $r_{s,2}(R)$ of an integer number $R$ as a sum of $s$ squares, and then, we implicitly use  the very well known bounds for $r_{s,2}(R)$ from the Hardy-Littlewood-Ramanujan circle method.  In our analysis, we explicitly compute the parametrisation of the spectrum of the Laplacian (modulo a factor depending on the geometry of the group) in terms of sums of squares. As a byproduct,  our approach also yields explicit expressions for  the spectrum of the Laplacian on all  compact connected simple  Lie groups, providing also a number of  results for Strichartz estimates in the borderline case $p=2.$
\end{abstract} 

\maketitle

\tableofcontents
\allowdisplaybreaks

\section{Introduction}

Through J.  Bourgain's work about dispersive equations on rational and irrational torus,  the role of the analytic number theory and in particular,  the analysis of the {\it Waring
 problem} via the recent complete solution in \cite{BourgainDemeter2015,BourgainDemeterGuth2016} of {\it Vinogradov's mean value conjecture}, has shown to be relevant for several problems in harmonic analysis and dispersive PDE.  In this work we follow the approach of Bourgain in order to investigate the extension of Strichartz estimates on an arbitrary simple compact Lie group $G$ (endowed with a bi-invariant Riemannian metric $g$) connecting this problem to the one of computing the number of representations $r_{s,2}(R)$ of an integer $R,$ as sums of $s\geq 2$ squares. 

\subsection{Outline}
Let $(M,g)$ be a closed Riemannian manifold. Let $u(t,x)=e^{it\Delta_g}$ be the solution of the Sch\"rodinger equation for the positive Laplacian $\Delta_g$ on a compact Riemannian manifold $(M,g).$   We record that an estimate of the type 
\begin{equation}\label{CS:E}
     \Vert u(t,x)\Vert_{L^p(I,L^q(M))}\leq C\Vert u_0\Vert_{H^{s}(M)},\,\,
 \end{equation} is called a {\it classical Strichartz estimate}, where $I\subset \mathbb{R}^+_0$ is a closed interval. It is well known that the Sobolev exponent $s$ in \eqref{CS:E} is expected to satisfy the inequality
 \begin{equation}
     s\geq s_{0}(p,q)=\frac{d}{2}-\frac{2}{p}-\frac{d}{q}.
 \end{equation}

 The validity of this kind of Strichartz estimates is very well known from the work of Burq, G\'erard and Tzvetkov \cite{BurqGerardTzvetkov2004}. Their estimate is as follows.
 
 {\it Let $d\geq 1.$  The solution to the linear Schrodinger equation
 \begin{equation}\label{SchEq}
     iu_t+\Delta_gu=0,\,\,u(0,x)=u_0(x),\,\,(t,x)\in [0,1]\times M,
 \end{equation} satisfies the estimate
 \begin{equation}\label{BGT}
     \Vert u(t,x)\Vert_{L^p([0,1],L^q(M))}\leq C\Vert u_0\Vert_{H^{\frac{1}{p}}(M)},\,\,
 \end{equation}where $(p,q)$ are Lebesgue indices satisfying the admissibility condition  $$2/p+d/q=d/2, \,\,(p,q,d)\neq (2,\infty,2).$$} 
 The optimality of \eqref{BGT} in the end-point case $p=2$ was proved in \cite{BurqGerardTzvetkov2004} for spheres $\mathbb{S}^{d}.$ Moreover, they have proved the following general statement when adapting Bourgain's argument to this generality level.

 { \it Let $d\geq 2.$  Assume that $(M,g)$ is a compact Riemannian manifold whose geodesics are closed and with a common period. Then the estimate \eqref{BGT} can be improved as
 \begin{equation}\label{BGT:2}
     \Vert u(t,x)\Vert_{L^4([0,1]\times M)}\leq C\Vert u_0\Vert_{H^{s}(M)},\,\,s>s_0(d),\,\,
 \end{equation}where 
 \begin{equation}\label{BGT:s(d)}
     s_0(2)=1/8,\,s_0(d)=\frac{d}{4}-\frac{1}{2},\, \forall d\geq 3.
 \end{equation}Moreover, if $M=\mathbb{S}^d$ is endowed with the canonical metric, the Sobolev regularity order $s>s_0(d)$ in \eqref{BGT:2} is sharp. 
 } 
 
 We note that the the regularity order in \eqref{BGT:s(d)} agrees with the one obtained by Bourgain in the case of the torus only when $d\geq 3.$ Indeed, on the flat torus $\mathbb{T}^d,$ and by computing explicitly $$\Vert u\Vert_{L^4(\mathbb{T}\times \mathbb{T}^d)}^2=\Vert u^2\Vert_{L^2(\mathbb{T}\times \mathbb{T}^d)}.$$ Bourgain showed that:

 {\it if $d=2,$ for every $\varepsilon>0,$ $\Vert u\Vert_{L^4(\mathbb{T}\times \mathbb{T}^d)}\leq C_\varepsilon\Vert u_0\Vert_{H^\varepsilon(\mathbb{T}^d)}, $ and when $d\geq 3,$ the following estimate
 \begin{equation}\label{Bourgain}
     \Vert u(t,x)\Vert_{L^4([0,1]\times M)}\leq C\Vert u_0\Vert_{H^{s}(M)},\,\,s>s_0(d),\,\,
 \end{equation}holds for $s_0(d)={d}/{4}-{1}/{2}.$ 
 }

Due to the general perspective of the results in \eqref{BGT} or in \eqref{BGT:2} the sharpness of these results is an interesting problem. The best-studied situation is when $M=\mathbb{R}^d/\Gamma$ is a torus for some rank $d$ lattice. Improvements of \eqref{Bourgain} for general indices of $p$ is a significant problem due to its connections with {\it weak turbulence} for non-linear Schr\"odinger equations, see  e.g.  \cite[Page 703]{DengGermainGuthRydinMyerson}. 

Recently, in a series of works, Zhang in \cite{ZhangThesis,Zhang2020,Zhang2020:2,Zhang2021} has obtained a  number of improvements in the setting of simple compact Lie groups  for Strichartz estimates of the Schr\"odinger equation by obtaining analogues of \eqref{BGT:2} and extensions of it for all $1<p<\infty.$ Zhang's result can be stated as follows, see \cite[Theorem 1]{Zhang2021} in the compact Lie group setting.

{\it Let $(G,g)$ be a compact Lie group  of rank $\ell\geq 2$ equipped with the canonical
Killing metric $g$ and let $I\subset \mathbb{R}^{+}_0$ be a closed interval. Then the solution to \eqref{SchEq} satisfies the estimate
\begin{equation}\label{Zhang}
    \Vert u(t,x)\Vert_{L^p(I\times G)}\lesssim  \Vert u \Vert_{H^s(G)},
\end{equation}provided that $p\geq 2+\frac{8}{\ell},$ for all $s$ such that
\begin{equation}\label{Zhang:2}
  s\geq \frac{d}{2}-\frac{d+2}{p}.  
\end{equation}
 }

{{ 
The aim of this work is to analyse the case of reverse Strichartz estimates on a compact Lie group $G$. By {\it reverse Strichartz estimates} on a compact Riemannian manifold $(M,g)$ we mean Sobolev estimates of the form 
\begin{equation}\label{CS:E}
     \Vert u(t,x)\Vert_{L^q(M,\,L^p(I))}\leq C\Vert u_0\Vert_{H^{s}(M)}.\,\,
 \end{equation} The main goal of this work is to estimate the Sobolev index $s_{0}^{R}(p,q)$ that allows the validity of \eqref{CS:E} when $s\geq s_{0}^{R}(p,q). $ The Sobolev index $s_{0}^{R}(p,q)$ is expected to hold  the inequality 
 \begin{equation}
     s_{0}^{R}(p,q)\geq  s_{0}(p,q)=\frac{d}{2}-\frac{2}{p}-\frac{d}{q}.
 \end{equation} In particular, when $p=q$ we expect to provide a new value for the index $s_{0}^{R}(p,p),$ just when $p\in [2,2+\varepsilon]$ with $\varepsilon>0$ small enough. Indeed, with $\varepsilon=2+\frac{4(d+\ell)}{d\ell},$ is known from the works of Zhang \cite{ZhangThesis,Zhang2020,Zhang2020:2,Zhang2021} that one can take $s_{0}^{R}(p,p)=  s_{0}(p,p),$ if $p\in [2+\frac{4(d+\ell)}{d\ell},\infty).$  }} 
 
 Here, we consider only the case where $G$ is a simple and compact Lie group, because we make use of explicit representations of the spectrum of the Laplace operator, which modulo a factor, can be written in terms of a sum of squares. Indeed, it is a very well-known fact that for a fixed set of simple positive roots $\Sigma$ with corresponding fundamental weights $\overline{\omega}_1,...,\overline{\omega}_n,$ and each highest weight $$\small{{\tilde{\nu}}=\sum\limits_{j=1}^l{\tilde{\nu}}_j\overline{\omega}_j},\,{\tilde{\nu}}_j\in\mathbb{N}_0,$$ on a compact Lie group $G,$ one can associate an eigenvalue $\lambda_{{\tilde{\nu}}}$ for the positive Laplacian $\mathcal{L}_G$ given by 
 \begin{equation}\label{eigenvalue:formula:2:intro}
    \lambda_{{\tilde{\nu}}}=(\eta,\eta)-({\tilde{\nu}}+\eta,{\tilde{\nu}}+\eta)\footnote{where
 $ 
    \eta:=\frac{1}{2}\sum\limits_{\alpha\in\Pi^+}\alpha,
$ and the sum runs over the system of positive roots $\Pi^+.$ },
\end{equation}
Conversely, every eigenvalue of $\mathcal{L}_G$ can be written in the form of  \eqref{eigenvalue:formula:2:intro}. Due to the importance that it has for us the explicit construction of the spectrum of the Laplace operator in terms of the root systems, we compute it on every  simple and simply connected $G.$ Since these groups  are classified by the root system associated with their Cartan subalgebras, in terms of their Dynkin diagrams, we split our analysis into the following cases:
 \begin{itemize}
     \item the Lie algebra $\mathfrak{g}$ of $G$ is $A_\ell, $ in which case $G\cong \textnormal{SU}(\ell+1).$
     \item The Lie algebra $\mathfrak{g}$ of $G$ is $B_\ell, $ in which case $G\cong \textnormal{Spin}(2\ell+1).$
     \item The Lie algebra $\mathfrak{g}$ of $G$ is $C_\ell, $ in which case $G\cong \textnormal{Sp}(\ell).$
     \item  The Lie algebra $\mathfrak{g}$ of $G$ is $D_\ell, $ in which case $G\cong \textnormal{Spin}(2\ell).$
     \item  The Lie algebra $\mathfrak{g}$ of $G$ is one of the exceptional Lie algebras $\mathfrak{g}_2,\mathfrak{f}_4 $ or $\mathfrak{e}_8,$ in which case  $G\cong G_2,$ $G\cong F_4,$ or $G\cong E_8,$ respectively.    
 \end{itemize}
 For our further analysis, we exclude the case  $\ell=1$  since $\textnormal{Spin(3)}\cong \textnormal{SU}(2)\cong \mathbb{S}^3$ and the Strichartz estimates in the case of the spheres have been analysed in the work of Burq, Gerard, and Tzvetkov \cite{BurqGerardTzvetkov2004}. Having explained this classification, we are ready to present our result. 
 \subsection{Main result}
 In the case where $2\leq p<\infty$ and $2\leq q\leq \infty,$ the results of this paper can be summarised in Theorem \ref{Main:Theorem} below. In view of Remark \ref{nonsimply:connected},   we only  consider the validity of the Strichartz estimates in the case where $G$ is a connected simple and simply connected compact Lie group. However, our analysis also covers  the case where $G$ is not simply connected. Indeed,  if $G$ is a connected (not necessarily simply-connected) compact simple Lie group, we have that $$G\cong \tilde{G}/\Gamma,$$ where $\tilde{G}$ is a connected simply-connected compact simple Lie group (diffeomorphic to the universal cover of $G$) and $\Gamma$ is a discrete subgroup of $\tilde{G}.$ This property allows us to conclude the inclusion \begin{equation}
  \textnormal{Spec}(G)\subseteq\textnormal{Spec}(\tilde{G}).
\end{equation}As in our approach we apply an explicit construction of the spectrum of the Laplacian we have proved the following theorem.
 
\begin{theorem}\label{Main:Theorem} Let $2\leq p<\infty.$ Let $(G,g)$ be a compact connected simple Lie group of rank $\ell\geq 2$ endowed with a Riemmanian bi-invariant metric $g$ and let $\tilde{G}$ be its universal cover.  Let 
\begin{equation}\label{m}
   m=\left\{\begin{array}{lll}
\ell+1, &&\textnormal{if}\ \tilde{G}\cong\textnormal{SU($\ell+1$)}\ \textnormal{or}\ \tilde{G}\cong G_2,\\
\ell, &&\textnormal{otherwise}.
\end{array}\right. 
\end{equation}
Let $T=2\pi b  \gamma,$ where $b$ is a non-negative integer depending on $G$ and $\gamma$ is  a geometric constant defined by the metric $g.$ Then 
\begin{equation}\label{Intro:maisn:SE}
\left|\left|u\right|\right|_{L^q(G,L^p[0,2\pi b  \gamma])}\lesssim_s  \Vert u_0\Vert_{H^{ s} (G) },
\end{equation} 
where $$s \geq s_{0}^R(p,q):=   \frac{m}{2}-\frac{2}{p}+\sigma(q)$$ if $m\geq 5,$ $s> s_{0}^R(p,q)$ if $m=2,3,4$ and 
\begin{equation*}
    \sigma(q):=\left\{\begin{array}{lll}
     \frac{d-1}{2}\left(\frac{1}{2}-\frac{1}{q}\right),&& \textnormal{if}\ 2\leq q\leq \frac{2(d+1)}{d-1},\\
     \\
     \frac{d-1}{2}-\frac{d}{q},&& \textnormal{if}\ \frac{2(d+1)}{d-1}<q\leq {\infty.}
     \end{array}\right.
\end{equation*} 
\end{theorem}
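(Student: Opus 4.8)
The plan is to run Bourgain's analytic–number–theory scheme in the time variable and Sogge's spectral cluster estimates in the space variable, glued together by Littlewood--Paley theory. Write $u(t,x)=e^{it\mathcal L_G}u_0=\sum_{\tilde\nu}e^{it\lambda_{\tilde\nu}}u_{0,\tilde\nu}(x)$, with $u_{0,\tilde\nu}$ the projection of $u_0$ onto the $\pi_{\tilde\nu}$-isotypic subspace (Peter--Weyl). By the explicit parametrisation of $\mathrm{Spec}(\mathcal L_G)$ obtained above --- in which, up to the factor $\gamma$ and an additive constant, $\lambda_{\tilde\nu}$ equals the squared Euclidean length $|n(\tilde\nu)|^2=n_1^2+\cdots+n_m^2$ of the image of $\tilde\nu$ under an explicit map into a shifted lattice $\Lambda\subset\mathbb Z^m$, with $m$ as in \eqref{m} and $\tilde\nu\mapsto n(\tilde\nu)$ having uniformly bounded fibres --- the arithmetic built into this parametrisation makes $t\mapsto u(t,x)$ periodic of period $T=2\pi b\gamma$, and after extracting a unimodular scalar (which does not affect $L^p_t$-norms) we may take $u(t,x)=\sum_{n\in\Lambda}e^{i\omega|n|^2t}\,v_n(x)$, $v_n:=u_{0,\tilde\nu(n)}$, with $\omega T\in 2\pi\mathbb Z$. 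Since $v_n$ carries the single temporal frequency $\omega|n|^2\asymp\omega N^2$ when $|n|\asymp N$, a dyadic decomposition of $u_0$ in $\sqrt{\mathcal L_G}$ is simultaneously a Littlewood--Paley decomposition of $u(x,\cdot)$ in time. Using the Littlewood--Paley inequality in $t$ (valid for $1<p<\infty$ --- this is where the hypothesis $p<\infty$ enters), Minkowski's inequality in the square function ($p\ge 2$), and Minkowski's inequality in the $\ell^2$ of $L^q_x$-norms ($q\ge2$, including $q=\infty$), it suffices to prove the single-shell estimate
\[
\big\|e^{it\mathcal L_G}u_0\big\|_{L^q(G,\,L^p[0,T])}\ \lesssim\ N^{\,s_{0}^{R}(p,q)+\varepsilon}\,\|u_0\|_{L^2(G)},\qquad\varepsilon=0\ \text{if}\ m\ge5,
\]
for $u_0$ spectrally localised to $\sqrt{\mathcal L_G}\sim N$, and then sum in $N$: for $m\ge5$ one sums at $s=s_{0}^{R}$, while for $m=2,3,4$ the factor $N^\varepsilon$ only permits $s>s_{0}^{R}$.

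For the time estimate, fix $x$ and dyadic $N$ and set $g(t)=\sum_{|n|\asymp N}v_n(x)\,e^{i\omega|n|^2t}$ on the period $[0,T]$. For $p=2k$ an even integer, expanding $|g|^{2k}=|g^k|^2$ and using orthogonality of $\{e^{i\omega Rt}\}_{R}$ on $[0,T]$ gives
\[
\|g\|_{L^{2k}_t}^{2k}\ \lesssim\ \sum_{R}\Big|\sum_{\substack{(n_1,\dots,n_k)\in\Lambda^k,\ |n_i|\asymp N\\ |n_1|^2+\cdots+|n_k|^2=R}}v_{n_1}(x)\cdots v_{n_k}(x)\Big|^2,
\]
and Cauchy--Schwarz on each inner sum, followed by summation in $R$, yields $\|g\|_{L^{2k}_t}\lesssim(\max_R J_k(R))^{1/(2k)}\big(\sum_n|v_n(x)|^2\big)^{1/2}$, where $J_k(R):=\#\{\vec n\in\Lambda^k:\ |n_i|\asymp N,\ \sum_i|n_i|^2=R\}$. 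Here analytic number theory enters: identifying $\Lambda^k\subset\mathbb Z^{km}$ and $\sum_i|n_i|^2$ with the squared Euclidean norm, one has $J_k(R)\le r_{km,2}(R)$, and the Hardy--Littlewood--Ramanujan circle-method bound gives $r_{km,2}(R)\lesssim_\varepsilon R^{km/2-1+\varepsilon}\lesssim N^{km-2+\varepsilon}$, with $\varepsilon=0$ whenever $km\ge5$ (bounded singular series). Hence $\|g\|_{L^{2k}_t}\lesssim N^{m/2-1/k+\varepsilon}(\sum_n|v_n(x)|^2)^{1/2}=N^{m/2-2/p+\varepsilon}(\sum_n|v_n(x)|^2)^{1/2}$; the endpoint $p=2$ is the case $k=1$ (Plancherel, the same Cauchy--Schwarz, and $r_{m,2}(R)\lesssim_\varepsilon R^{m/2-1+\varepsilon}$), whose $\varepsilon$-loss is genuine precisely when $m\le4$. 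Interpolating (Riesz--Thorin) the linear map $\{v_n(x)\}_n\mapsto g$ between the endpoints $L^2_t$ and $L^{2k}_t$ with $2k\ge p$ gives, for every $2\le p<\infty$ and pointwise in $x$,
\[
\|u(x,\cdot)\|_{L^p[0,T]}\ \lesssim\ N^{\,m/2-2/p+\varepsilon}\Big(\sum_n|v_n(x)|^2\Big)^{1/2},
\]
with $\varepsilon=0$ if $m\ge5$ (otherwise $\varepsilon>0$ arbitrary, the loss coming from the $p=2$ endpoint and hence affecting all $p$).

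For the space estimate, take $L^q(G)$-norms in $x$; by Minkowski's inequality ($q\ge 2$, trivially also $q=\infty$), $\big\|(\sum_n|v_n|^2)^{1/2}\big\|_{L^q(G)}\le(\sum_n\|v_n\|_{L^q(G)}^2)^{1/2}$. Each $v_n$ lies in the single eigenspace $\ker(\mathcal L_G-\lambda_{\tilde\nu(n)})$ with $\sqrt{\lambda_{\tilde\nu(n)}}\asymp N$; since the spectral projector onto a single eigenvalue $\mu^2$ factors through the unit cluster $\mathbf 1_{[\mu,\mu+1]}(\sqrt{\mathcal L_G})$, Sogge's spectral cluster estimate on the compact Riemannian manifold $(G,g)$ gives $\|v_n\|_{L^q(G)}\lesssim N^{\sigma(q)}\|v_n\|_{L^2(G)}$ with exactly the exponent $\sigma(q)$ of the statement (here $d=\dim G$). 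By orthogonality of distinct isotypic components (Peter--Weyl), $\sum_n\|v_n\|_{L^2}^2=\|u_0\|_{L^2}^2$, so $\big\|(\sum_n|v_n|^2)^{1/2}\big\|_{L^q}\lesssim N^{\sigma(q)}\|u_0\|_{L^2}$. Combining with the previous paragraph, the single-shell estimate holds with exponent $\tfrac m2-\tfrac2p+\sigma(q)+\varepsilon=s_{0}^{R}(p,q)+\varepsilon$, and the summation in $N$ from the first paragraph finishes the proof.

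The substantive difficulty is the arithmetic interface, not the (routine) harmonic analysis. One needs the exact sum-of-squares description of $\mathrm{Spec}(\mathcal L_G)$ --- carried out case by case for $A_\ell,B_\ell,C_\ell,D_\ell,\mathfrak g_2,\mathfrak f_4,\mathfrak e_8$ in the preceding sections, and responsible for the value of $m$ in \eqref{m} (which for $A_\ell$ and $G_2$ exceeds the rank, the Casimir form being realised as an honest sum of $m>\ell$ integer squares) --- together with the sharp circle-method bound for sums of $km$ squares, whose $\varepsilon$-freeness for $km\ge5$ versus the divisor-type growth for $km\le4$ is exactly the source of the dichotomy $m\ge5$ (allowing $s=s_{0}^{R}$) versus $m\in\{2,3,4\}$ (forcing $s>s_{0}^{R}$, through the $p=2$ interpolation endpoint) in the statement.
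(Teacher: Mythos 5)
Your proof is correct, and it arrives at the same Sobolev index via a genuinely different route from the paper. Both arguments share the same scaffolding: the explicit sum-of-squares parametrisation of $\mathrm{Spec}(\mathcal L_G)$ (Theorem \ref{general:spectrum:theorem}), the periodicity $T=2\pi b\gamma$, the Minkowski inequality to bring the $L^q(G)$-norm inside the square function, the Sogge cluster estimate (Lemma \ref{lq:l2:estimate}) for the spatial gain $\sigma(q)$, and the Hardy--Littlewood--Ramanujan bounds from Theorem \ref{rsk:theorem}. Where you diverge is in the $L^p_t$-step. The paper applies Wainger's Sobolev embedding $\|\sum_n a_n e^{itn}\|_{L^p([0,2\pi])}\lesssim\|\sum_n |n|^{\alpha}a_n e^{itn}\|_{L^2([0,2\pi])}$ with $\alpha=\tfrac12-\tfrac1p$ directly, in one shot for all $p\in[2,\infty)$; after rescaling and using orthogonality of $\{e^{iRs}\}$ over the period it reduces to $\sum_R N_R\,R^{2\alpha+\sigma(q)}\|P_{R/(b\gamma)}u_0\|_{L^2}^2$ with $N_R\le r_{m,2}(R+R_0)$, and never needs a Littlewood--Paley reduction nor interpolation. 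You instead dyadically localise in $\sqrt{\mathcal L_G}$ (observing, correctly, that this doubles as a temporal Littlewood--Paley decomposition because the time frequencies are $\omega|n|^2$), then run Bourgain's power-expansion $|g|^{2k}=|g^k|^2$ plus Cauchy--Schwarz on each level set $\{\sum_i|n_i|^2=R\}$, invoking $J_k(R)\le r_{km,2}(R)$, and finally interpolate between $p=2$ and an even $p=2k$. The paper's route is shorter and handles all $p$ uniformly with a single arithmetic input $r_{m,2}$; yours is conceptually closer to Bourgain's original torus argument and uses $r_{km,2}$ for varying $k$. Interestingly, your approach is strictly sharper on the even exponents: for $m\in\{2,3,4\}$ and $p=2k$ with $km\ge5$, your single-shell estimate carries no $\varepsilon$-loss (the loss only enters through the $p=2$ interpolation endpoint), so you actually obtain the closed inequality $s\ge s_0^R(p,q)$ there, whereas the paper's $r_{m,2}$-based bound forces $s>s_0^R(p,q)$ for all $p$ as soon as $m\le4$. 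This is a genuine (if modest) improvement that the Wainger route does not see.
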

Now, we briefly discuss this result and its relation to the existent results on compact manifolds. 
\begin{remark}
    Note that when $\Tilde{G}\cong \textnormal{SU}(\ell+1)$ or if $\Tilde{G}\cong \textnormal{G}_2,$ the root system of the Lie algebra is constructed in an Euclidean space of dimension $\ell+1.$ Also, if $\Tilde{G}$ is not diffeomorphic to $ \textnormal{SU}(\ell+1)$ or to $\textnormal{G}_2,$ its root system is constructed in an Euclidean $\ell$-dimensional space. These facts justify the two values $m=\ell,$ or $m=\ell+1,$ of $m$ defined in \eqref{m}.
\end{remark}
\begin{remark} On $\mathbb{R}^n,$ the sharp Strichartz estimate
\begin{equation}\label{Strichartz}
    \Vert u(t,x)\Vert_{L^p_tL^q_x(\mathbb{R}\times \mathbb{R}^d )}\leq C\Vert u_0\Vert_{L^2(\mathbb{R}^d)},
\end{equation}is valid for all the {\it admissible pairs} $(p,q),$ i.e. $p$ and $q$ satisfying $2/p+d/q=d/2,$ see Ginibre and Velo \cite{GinibreVelo} and Keel and Tao \cite{KeelTao98}. Note that by the Sobolev embedding theorem one has that the estimate
\begin{equation}\label{1}
    \Vert u(t,x)\Vert_{L^p_tL^r_x(\mathbb{R}\times \mathbb{R}^d )}\leq C\Vert u_0\Vert_{H^s(\mathbb{R}^d)},
\end{equation}is valid with
\begin{equation}\label{2}
    s=\frac{d}{2}-\frac{2}{p}-\frac{d}{r}\geq 0,
\end{equation} where $p,q\geq 2,$ and $(p,r,d)\neq (2,\infty,2).$ The triples $(p,q,r)$ satisfying  \eqref{1} and \eqref{2} are usually called {\it scale-invariant}. However, on compact manifolds the Strichartz estimates as in \eqref{Strichartz} are expected to fail, while in the setting of compact manifolds the validity of an estimate of the type \eqref{1}
is expected to hold only for positive values of $s>0.$ Moreover, is also expected to have Strichartz estimates for $(p,q,r)$ being a non-admissible triple and with $s>\frac{d}{2}-\frac{2}{p}-\frac{d}{r},$ see e.g. the discussion in Zhang \cite{Zhang2020}.   
\end{remark}
\begin{remark}\label{Remark:sh}
Recently Zhang in \cite{Zhang2023} has improved the validity of Strichartz estimates    \eqref{Zhang} where $s\geq \frac{d}{2}-\frac{d+2}{p}$ and    $p\geq 2+\frac{8(s_0-1)}{s_0\ell},$ where $s_0=2d/(d-\ell).$ Note that for $s=\frac{d}{2}-\frac{d+2}{p}$ one has that $p\geq  2+\frac{4(d+\ell)}{d\ell}.$  So, the contributions of our result for $p$ in the interval $$[2,2+\frac{4(d+\ell)}{d\ell}]$$ are  new.  
{{If we denote by
\begin{equation}
 s_{0}^R(p,q):=   \frac{m}{2}-\frac{2}{p}+\sigma(q)
\end{equation}  the Sobolev exponent obtained in Theorem \ref{LemmaSpin2l:1},
 one has the inequality $s_{0}^R(p,q)\geq s_{0}(p,q).$ Indeed, for $q\in [2,\frac{2(d+1)}{d-1}],$ one can prove that
$$ s_{0}^R(p,q)-s_{0}(p,q)=\frac{m}{2}-\frac{d+1}{2}\left(\frac{1}{2}-\frac{1}{q}\right) \geq 0, $$ while for $2\in \left(\frac{2(d+1)}{d-1},\infty\right)$ one has that
$$ s_{0}^R(p,q)-s_{0}(p,q)=\frac{m-1}{2}\geq 0. $$
 We do not expect that the index  $s_{0}^R(p,q)$ will be sharp when $p\rightarrow 2^+,$ but still when $q=2,$ the index $s_{0}^R(p,2)$ is sharp with respect to the approach used in this work.}}

\end{remark}

\section{Tools from Lie theory and from analytic number theory}

\subsection{Spectrum of the Laplacian on compact simple Lie groups}
We will find the spectrum of the Laplacian operator on a Lie group via representations of its Lie algebra and provide some explicit examples of the eigenvalues in compact and simply-connected Lie groups. For this, we shall follow \cite{BeSvir2010,BeZuSvir2016,Svirkin2010,Zubareva2017,Zubareva2020}. 

Let $G$ be a $n$-dimensional connected and simply connected compact simple Lie group whose corresponding Lie algebra $\mathfrak{g}$ is the real compact form of a complex simple Lie algebra $\mathfrak{k}.$ Let us endow $\mathfrak{g}$ with the $\textnormal{Ad}$-invariant inner product $g$ given by a multiple $\gamma>0$ of the negative of the Killing form of $\mathfrak{g}$, that is, $g=-\gamma\kappa,\ \textnormal{where}\ \kappa(X,Y):=\textnormal{Tr(ad(}X\textnormal{)ad(}Y\textnormal{))}$. It is known that $g$ induces a bi-invariant Riemannian metric on $G$ which will be also denoted by $g$. The Laplacian on $G$ is defined by $$\mathcal{L}_G:=-\sum\limits_{j=1}^nX_j^2,$$ where $\mathbb{X}=\{X_1,...,X_n\}$ is a $g$-orthonormal basis of left-invariant vector fields. Let  $\mathfrak{t}$ be a Cartan subalgebra of $\mathfrak{k}$ (i.e. $\mathfrak{t}$ is nilpotent and self-normalising) and denote by $\mathfrak{t}(\mathbb{R})$ its corresponding real form so that we can consider the root system  $$\Pi:=\{\alpha\in\mathfrak{t}(\mathbb{R})^*\setminus\{0\}|\exists X\in\mathfrak{g}\setminus\{0\}\ \textnormal{such that}\ [H,X]=\alpha(H)X,\ \forall H\in\mathfrak{t}(\mathbb{R})\}\subseteq\mathfrak{t}(\mathbb{R})^*.$$
Fix subsets $\Pi^+\subseteq\Pi$ and $\Sigma=\{\alpha_1,...,\alpha_\ell\}\subseteq\Pi^+$ of positive roots and positive simple roots respectively. The restriction $(\cdot,\cdot):=\gamma\kappa\left|_{\mathfrak{t}(\mathbb{R})\times\mathfrak{t}(\mathbb{R})}\right.$ on $\mathfrak{t}(\mathbb{R})$ is non-degenerate and it induces a quadratic form on $\mathfrak{t}(\mathbb{R})^*$ (also denoted by $(\cdot,\cdot)$) as follows: since $(\cdot,\cdot)$ is non-degenerate, then the map $H\mapsto\alpha_H:=(H,\cdot)$ is an isomorphism between $\mathfrak{t}(\mathbb{R})$ and $\mathfrak{t}(\mathbb{R})^*$; this means that for each $\alpha\in\mathfrak{t}(\mathbb{R})$ there exists a unique $H_\alpha\in\mathfrak{t}(\mathbb{R})$ satisfying $\alpha(H)=(H_\alpha,H),\ \forall H\in\mathfrak{t}(\mathbb{R})$, so we define $(\alpha,\beta):=(H_\alpha,H_\beta).$ 

The fundamental weights $\overline{\omega}_1,...,\overline{\omega}_\ell\in\mathfrak{t}(\mathbb{R})^*$ of $\mathfrak{g}$ with respect to $\Sigma$ are defined by the relations
$$(\overline{\omega}_i,\alpha_j)=\delta_{ij}\frac{(\alpha_j,\alpha_j)}{2},\ i,j=1,...,\ell,$$
where $\delta_{ij}$ is the Kronecker symbol. 
For a given a complex finite dimensional representation $\rho:\mathfrak{k}\rightarrow\mathfrak{gl}(V)$, there exists a decomposition
$$V=\bigoplus\limits_{\alpha\in\Phi_\rho}V_\alpha,$$
where  $V_\alpha:=\{v\in V:\rho(H)v=\alpha^{\mathbb{C}}(H)v, \forall H\in\mathfrak{t}\}$ and $\Phi_\rho:=\{\alpha\in\mathfrak{t}(\mathbb{R})^*:V_\alpha\neq\{0\}\}.$ The elements of $\Phi_\rho$ and the subspaces $V_\alpha,\ \alpha\in\Phi_\rho,$ are called the {\it weights} and the {\it weight subspaces} of the representation $\rho$ respectively. If $\alpha,\beta\in\Phi_\rho$, we say that $\alpha$ is {\it higher} than $\beta$ whenever $\alpha-\beta$ is a linear combination of positive roots with non-negative coefficients. A  {\it highest weight} of $\rho$ is a weight $\tilde{\nu}$ which is higher than every other weight. The following is classical result by Cartan \cite{Cartan}.
\begin{theorem}[Theorem of the highest weight]\label{Cartan:theorem} With the notations above we have that the following assertions hold:

\begin{itemize}
\item[(1)] Every finite-dimensional irreducible complex representation $\rho:\mathfrak{k}\rightarrow\mathfrak{gl}(V)$ has a unique highest weight $\tilde{\nu}$ which can be written as a linear combination of the fundamental weights $\overline{\omega}_1,...,\overline{\omega}_\ell$ with integral non-negative coefficients, that is,
\begin{equation}\label{highest:weight}\tilde{\nu}=\sum\limits_{j=1}^\ell\tilde{\nu}_j\overline{\omega}_j,\ \tilde{\nu}_j\in\mathbb{N}_0.
\end{equation}
Moreover, two irreducible finite-dimensional representations of $\mathfrak{k}$ with the same highest weight are equivalent.
\item[(2)] Every functional of the form \eqref{highest:weight} is the highest weight of an irreducible complex representation of $\mathfrak{k}.$
\end{itemize}
\end{theorem}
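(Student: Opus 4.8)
The plan is to reproduce the classical argument, organised around the $\mathfrak{sl}(2,\mathbb{C})$-triples attached to the simple roots and, for the converse, around Verma modules. Write $\mathfrak{k}_{\alpha}$ for the root space of $\mathfrak{k}$ associated with $\alpha\in\Pi$, set $\mathfrak{n}^{\pm}:=\bigoplus_{\alpha\in\Pi^{+}}\mathfrak{k}_{\pm\alpha}$ and $\mathfrak{b}:=\mathfrak{t}\oplus\mathfrak{n}^{+}$, let $U(\cdot)$ denote the universal enveloping algebra, and for each $j=1,\dots,\ell$ fix $e_{j}\in\mathfrak{k}_{\alpha_{j}}$, $f_{j}\in\mathfrak{k}_{-\alpha_{j}}$ with $h_{j}:=[e_{j},f_{j}]=2H_{\alpha_{j}}/(\alpha_{j},\alpha_{j})$, so that $\mathfrak{s}_{j}:=\mathrm{span}_{\mathbb{C}}\{e_{j},f_{j},h_{j}\}\cong\mathfrak{sl}(2,\mathbb{C})$.

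For part (1) I would first produce a highest weight: since $\dim V<\infty$ the set $\Phi_{\rho}$ is finite, so choosing a linear functional on $\mathfrak{t}(\mathbb{R})^{*}$ strictly positive on $\Pi^{+}$ and a weight $\tilde{\nu}\in\Phi_{\rho}$ that maximises it, one has $\rho(\mathfrak{k}_{\alpha})V_{\tilde{\nu}}\subseteq V_{\tilde{\nu}+\alpha}=\{0\}$ for every $\alpha\in\Pi^{+}$, i.e.\ any $0\neq v\in V_{\tilde{\nu}}$ is killed by $\mathfrak{n}^{+}$. The commutation relations show that $U(\mathfrak{n}^{-})v$ is a nonzero $\mathfrak{k}$-submodule, hence all of $V$ by irreducibility; this yields at once the uniqueness of the highest weight and $\dim V_{\tilde{\nu}}=1$. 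Restricting $V$ to $\mathfrak{s}_{j}$ and using the representation theory of $\mathfrak{sl}(2,\mathbb{C})$, the number $\tilde{\nu}(h_{j})=2(\tilde{\nu},\alpha_{j})/(\alpha_{j},\alpha_{j})$ is an integer, and it is non-negative because $e_{j}v=0$; the relations $(\overline{\omega}_{i},\alpha_{j})=\delta_{ij}(\alpha_{j},\alpha_{j})/2$ then give $\tilde{\nu}=\sum_{j}\tilde{\nu}_{j}\overline{\omega}_{j}$ with $\tilde{\nu}_{j}=\tilde{\nu}(h_{j})\in\mathbb{N}_{0}$, which is the first assertion of (1). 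For the equivalence statement I would take two irreducibles $V,V'$ with the same highest weight and highest weight vectors $v,v'$, form the cyclic submodule $W:=U(\mathfrak{k})(v\oplus v')\subseteq V\oplus V'$, and note that the two coordinate projections restrict to surjections $W\to V$ and $W\to V'$; since $\dim W_{\tilde{\nu}}=1$, a dimension count forces these surjections to be isomorphisms, whence $V\cong W\cong V'$.

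For part (2), given $\tilde{\nu}=\sum_{j}\tilde{\nu}_{j}\overline{\omega}_{j}$ with $\tilde{\nu}_{j}\in\mathbb{N}_{0}$, the plan is to form the Verma module $M(\tilde{\nu}):=U(\mathfrak{k})\otimes_{U(\mathfrak{b})}\mathbb{C}_{\tilde{\nu}}$, where $\mathbb{C}_{\tilde{\nu}}$ is the line on which $\mathfrak{t}$ acts by $\tilde{\nu}$ and $\mathfrak{n}^{+}$ by $0$. The Poincar\'e--Birkhoff--Witt theorem gives $M(\tilde{\nu})\cong U(\mathfrak{n}^{-})$ as vector spaces, all weights of $M(\tilde{\nu})$ lie in $\tilde{\nu}-\sum_{j}\mathbb{N}_{0}\alpha_{j}$ with finite-dimensional weight spaces, and the $\tilde{\nu}$-weight space is one-dimensional, so $M(\tilde{\nu})$ has a unique maximal proper submodule and the quotient $L(\tilde{\nu})$ is irreducible with highest weight $\tilde{\nu}$. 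It remains to show $\dim L(\tilde{\nu})<\infty$, which is the step I expect to be the main obstacle. Denoting by $v_{+}$ the image of $1\otimes 1$, one computes $e_{j}f_{j}^{\,k}v_{+}=k(\tilde{\nu}_{j}-k+1)f_{j}^{\,k-1}v_{+}$ and, since $[e_{i},f_{j}]\in\mathfrak{k}_{\alpha_{i}-\alpha_{j}}=\{0\}$ for $i\neq j$ (a difference of distinct simple roots is not a root), $e_{i}f_{j}^{\,k}v_{+}=0$; hence $f_{j}^{\,\tilde{\nu}_{j}+1}v_{+}$ is annihilated by $\mathfrak{n}^{+}$, generates a proper submodule of $M(\tilde{\nu})$, and so vanishes in $L(\tilde{\nu})$. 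This makes the $\mathfrak{s}_{j}$-action on $v_{+}$, and then on all of $L(\tilde{\nu})$, locally finite for every $j$, which forces the weight set of $L(\tilde{\nu})$ to be stable under every simple reflection $s_{j}$, hence under the Weyl group; a Weyl-invariant subset of $\tilde{\nu}-\sum_{j}\mathbb{N}_{0}\alpha_{j}$ lies in the convex hull of the finite orbit of $\tilde{\nu}$, so it is finite, and since each weight space is finite-dimensional we obtain $\dim L(\tilde{\nu})<\infty$; thus $L(\tilde{\nu})$ is the required representation.

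In short, the only genuinely delicate point is the finite-dimensionality of $L(\tilde{\nu})$; everything else is bookkeeping with the root data $\Pi,\Sigma,(\cdot,\cdot)$ and with elementary $\mathfrak{sl}(2,\mathbb{C})$-theory. An alternative to the Verma-module argument, more in the spirit of the compact setting of this paper, would be to invoke Weyl's unitary trick: finite-dimensional representations of $\mathfrak{k}$ correspond to those of its compact real form $\mathfrak{g}$, which integrate to the simply connected group $G$, and the existence of an irreducible with prescribed dominant highest weight then follows from the Peter--Weyl decomposition of $L^{2}(G)$ combined with the highest-weight analysis of part (1); I would keep the Verma-module proof as the primary route, since it is purely algebraic and self-contained.
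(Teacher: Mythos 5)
Your proof is correct, but it is worth pointing out that the paper does not actually prove this statement: Theorem \ref{Cartan:theorem} is invoked as a classical result, attributed to Cartan and available in the cited reference of Humphreys, and no argument is given in the text. What you have written is the standard textbook proof, and it is sound: the extraction of a maximal weight via a linear functional positive on $\Pi^{+}$, the cyclicity of $U(\mathfrak{n}^{-})v$, the $\mathfrak{sl}(2,\mathbb{C})$-integrality argument giving $\tilde{\nu}_{j}=\tilde{\nu}(h_{j})\in\mathbb{N}_{0}$, and the Verma-module construction of $L(\tilde{\nu})$ with the relation $f_{j}^{\,\tilde{\nu}_{j}+1}v_{+}=0$ forcing local finiteness, Weyl-invariance of the weight set, and hence finite-dimensionality. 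You correctly identify the finite-dimensionality of $L(\tilde{\nu})$ as the only delicate step. One small point you might tighten: in the equivalence argument, the ``dimension count'' deserves a sentence — the kernel of the projection $W\to V$ is $W\cap(0\oplus V')$, which by irreducibility of $V'$ is either $0$ or all of $0\oplus V'$; the latter would force $W=V\oplus V'$ and hence $\dim W_{\tilde{\nu}}=2$, contradicting $\dim W_{\tilde{\nu}}=1$. Your suggested alternative via Weyl's unitary trick and Peter--Weyl is indeed closer in spirit to the compact-group setting of the paper, but since the paper merely cites the result, either route is an acceptable way to make the exposition self-contained.
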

It is a known fact that if $\theta:G\rightarrow GL(V)$ is an irreducible complex finite-dimensional representation of the group $G$, then the function \begin{equation}\label{character}\varphi=\frac{1}{d}\chi_\theta,\end{equation} where $d=\textnormal{dim}(V)$ and $\chi_\theta$ is the character of the representation $\theta,$ is an eigenfunction of $\mathcal{L}_G$ and conversely, for each $\lambda\in\textnormal{Spec}(\mathcal{L}_G)\setminus\{0\}$ there exists a eigenfunction of the form \eqref{character} associated to the eigenvalue $\lambda.$ Furthermore, for each $\lambda\in\textnormal{Spec}(\mathcal{L}_G)\setminus\{0\}$, there exists a real irreducible representation $\theta_0:G\rightarrow GL(V)$ such that $\varphi=\frac{1}{d}\chi_{\theta_0}$ is an eigenfunction of $\mathcal{L}_G$ corresponding to $\lambda$ (see \cite[Corollary 3.2]{BeSvir2010}). The next proposition provides a formula to compute $\textnormal{Spec}(\mathcal{L}_G)$ in terms of real irreducible representations of $G.$ We refer to \cite[Theorem 3.2]{BeSvir2010} for a proof.

\begin{proposition}\label{eigenvalue:theorem}
Let $\theta_0:G\rightarrow GL(V)$ be an irreducible real $d$-dimensional representation of $G$ and $\rho_0:=(d\theta_0)_e:\mathfrak{g}\rightarrow \mathfrak{gl}(V)$ its corresponding infinitesimal representation. Then
\begin{equation}\label{eigenvalue:formula}
\lambda_{\theta_0}:=-\frac{1}{d}\sum\limits_{j=1}^n\textnormal{Tr}[\rho_0(X_j)^2]\in\textnormal{Spec}(\mathcal{L}_G).
\end{equation}  
\end{proposition}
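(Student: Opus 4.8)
The goal is to verify that the normalised character $\varphi=\tfrac1d\chi_{\theta_0}$, with $\chi_{\theta_0}(x):=\textnormal{Tr}\,\theta_0(x)$, is an eigenfunction of $\mathcal L_G$ precisely with the claimed eigenvalue $\lambda_{\theta_0}$; since $\varphi(e)=1\neq 0$, this already gives $\lambda_{\theta_0}\in\textnormal{Spec}(\mathcal L_G)$. First I would compute the derivatives of $\chi_{\theta_0}$ along the left-invariant vector fields in $\mathbb X$. Viewing $X_j$ at $x\in G$ as differentiation along $t\mapsto x\exp(tX_j)$ and using the homomorphism property of $\theta_0$, one gets $(X_j\chi_{\theta_0})(x)=\textnormal{Tr}\big(\theta_0(x)\rho_0(X_j)\big)$ and, applying $X_j$ once more, $(X_j^2\chi_{\theta_0})(x)=\textnormal{Tr}\big(\theta_0(x)\rho_0(X_j)^2\big)$. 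Summing over $j=1,\dots,n$,
\[
(\mathcal L_G\chi_{\theta_0})(x)=-\,\textnormal{Tr}\Big(\theta_0(x)\,\rho_0(\Omega)\Big),\qquad \rho_0(\Omega):=\sum_{j=1}^{n}\rho_0(X_j)^2 ,
\]
so everything reduces to identifying the operator $\rho_0(\Omega)$.

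The second step is to show that $\rho_0(\Omega)$ is a non-positive real scalar multiple of $\textnormal{Id}_V$. The Casimir element $\Omega=\sum_j X_j^2$ of the universal enveloping algebra is central, because the inner product $g=-\gamma\kappa$ defining $\mathbb X$ is $\textnormal{Ad}$-invariant; hence $\rho_0(\Omega)$ commutes with $\rho_0(\mathfrak g)$ and, $G$ being connected, with the whole group $\theta_0(G)$. Fixing a $G$-invariant inner product on $V$, each $\rho_0(X_j)$ is skew-symmetric, so $\rho_0(\Omega)=-\sum_j\rho_0(X_j)^*\rho_0(X_j)$ is symmetric and negative semidefinite; in particular it is diagonalisable over $\mathbb R$, and since it commutes with $\theta_0(G)$ each of its eigenspaces is a $G$-invariant subspace of $V$. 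Irreducibility of $\theta_0$ then forces $\rho_0(\Omega)=c\,\textnormal{Id}_V$ for a single $c\le 0$.

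Finally, taking the trace of $\rho_0(\Omega)=c\,\textnormal{Id}_V$ gives $cd=\sum_j\textnormal{Tr}[\rho_0(X_j)^2]$, i.e.\ $c=-\lambda_{\theta_0}$, and substituting into the displayed formula yields $(\mathcal L_G\chi_{\theta_0})(x)=-c\,\textnormal{Tr}\,\theta_0(x)=\lambda_{\theta_0}\chi_{\theta_0}(x)$ for all $x\in G$. Thus $\varphi=\tfrac1d\chi_{\theta_0}$ is a non-trivial eigenfunction of $\mathcal L_G$ with eigenvalue $\lambda_{\theta_0}\ge 0$, which proves the claim; this is the argument of \cite[Theorem 3.2]{BeSvir2010}, which I would follow. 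The one point that needs a little care is precisely the scalarity of $\rho_0(\Omega)$: over $\mathbb R$ Schur's lemma does not by itself produce a scalar, so one genuinely uses the self-adjointness of the Casimir (equivalently, one could complexify $V$ and note that $\Omega$ acts by the same real scalar on every irreducible summand of $V\otimes\mathbb C$); the remainder is a routine differentiation of matrix coefficients.
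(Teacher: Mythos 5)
Your proof is correct. The paper does not supply its own argument for this proposition, deferring instead to \cite[Theorem 3.2]{BeSvir2010}, so your write-up simply fills in the argument that the paper leaves to that reference. The two nontrivial steps are handled properly: the differentiation of the character along $t\mapsto x\exp(tX_j)$ giving $(X_j^2\chi_{\theta_0})(x)=\textnormal{Tr}\bigl(\theta_0(x)\rho_0(X_j)^2\bigr)$, and the scalarity of $\rho_0(\Omega)$. You correctly flag that real Schur's lemma alone does not yield scalarity (the commutant of a real irreducible representation may be $\mathbb{R}$, $\mathbb{C}$, or $\mathbb{H}$); using a $G$-invariant inner product (available by compactness) to make $\rho_0(\Omega)$ symmetric, hence real-diagonalizable with $G$-invariant eigenspaces, and then invoking irreducibility is the right fix, and the concluding trace computation identifying the scalar with $-\lambda_{\theta_0}$ so that $\mathcal{L}_G\varphi=\lambda_{\theta_0}\varphi$ with $\varphi(e)=1\neq 0$ is accurate.
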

 Since we are assuming that $G$ is simply connected, then the restriction $\rho\left|_{\mathfrak{g}}\right.$ of an irreducible complex representation $\rho$ of $\mathfrak{k}$  induces an irreducible real representation $\rho_0$ of $\mathfrak{g}$ whose character is uniquely determined by the highest weight $\tilde{\nu}$ of $\rho.$ On the other hand, irreducible real finite-dimensional representations of the Lie algebra $\mathfrak{g}$ are in bijective correspondence with irreducible real finite-dimensional representations of the group $G.$ So we can parametrise the nonzero eigenvalues of $\mathcal{L}_G$ with the set of all highest weights $\tilde{\nu}$ of irreducible complex finite-dimensional representations of $\mathfrak{k}$ which, due to Theorem \ref{Cartan:theorem}, is nothing but the set of all linear combinations of the fundamental weights with non-negative integral coefficients. Moreover, we have the following proposition:
 \begin{proposition}\textnormal{(\cite[Theorem 5.20]{BeSvir2010})} For each highest weight $\tilde{\nu}=\sum\limits_{j=1}^l\tilde{\nu}_j\overline{\omega}_j,\ \tilde{\nu}_j\in\mathbb{N}_0,$ the associated eigenvalue of $\mathcal{L}_G$ is given by 
 \begin{equation}\label{eigenvalue:formula:2}
    \lambda_{\tilde{\nu}}=(\eta,\eta)-(\tilde{\nu}+\eta,\tilde{\nu}+\eta),
\end{equation}
where
\begin{equation*}
    \eta:=\frac{1}{2}\sum\limits_{\alpha\in\Pi^+}\alpha.
\end{equation*}
\end{proposition}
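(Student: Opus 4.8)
The plan is to derive the eigenvalue formula \eqref{eigenvalue:formula:2} directly from Proposition \ref{eigenvalue:theorem}, reducing the trace expression there to a Casimir eigenvalue computation. First I would recognize that $-\frac{1}{d}\sum_{j=1}^n \textnormal{Tr}[\rho_0(X_j)^2]$ is, up to the normalization $\frac1d$, the trace of the Casimir element $\Omega = \sum_{j=1}^n \rho_0(X_j)^2$ associated to the $\textnormal{Ad}$-invariant form $g=-\gamma\kappa$ on $\mathfrak g$ (the minus sign absorbing the negative-definiteness of $\kappa$). Since $\{X_j\}$ is $g$-orthonormal and $g$ is $\textnormal{Ad}$-invariant, $\Omega$ is independent of the choice of orthonormal basis and commutes with $\rho_0(\mathfrak g)$; by Schur's lemma it acts on the irreducible module $V$ as a scalar $c_{\tilde\nu}$, so that $\lambda_{\theta_0} = -c_{\tilde\nu}$ and in particular $\frac1d\textnormal{Tr}[\Omega] = c_{\tilde\nu}$ automatically.

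Next I would compute $c_{\tilde\nu}$ by complexifying: extend $\rho_0$ to the complex representation $\rho$ of $\mathfrak k = \mathfrak g_{\mathbb C}$ with highest weight $\tilde\nu$, and express the Casimir in a root-adapted basis. Picking an orthonormal basis of $\mathfrak t(\mathbb R)$ together with root vectors $E_\alpha$ normalized so that $(E_\alpha, E_{-\alpha})$ matches the form, one gets the standard expression $\Omega = \sum_i H_i^2 + \sum_{\alpha\in\Pi^+}(E_\alpha E_{-\alpha} + E_{-\alpha}E_\alpha)$. Evaluating $\Omega$ on a highest weight vector $v_{\tilde\nu}$: the first sum contributes $(\tilde\nu,\tilde\nu)$, each $E_\alpha$ with $\alpha>0$ kills $v_{\tilde\nu}$, and the commutator $[E_\alpha, E_{-\alpha}] = H_\alpha$ contributes $\tilde\nu(H_\alpha) = (\tilde\nu,\alpha)$. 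Summing over $\Pi^+$ yields $(\tilde\nu,\tilde\nu) + 2(\tilde\nu,\eta) = (\tilde\nu+\eta,\tilde\nu+\eta) - (\eta,\eta)$, using $\eta = \frac12\sum_{\alpha\in\Pi^+}\alpha$. Hence $c_{\tilde\nu} = (\tilde\nu+\eta,\tilde\nu+\eta)-(\eta,\eta)$ and $\lambda_{\tilde\nu} = -c_{\tilde\nu} = (\eta,\eta)-(\tilde\nu+\eta,\tilde\nu+\eta)$, which is exactly \eqref{eigenvalue:formula:2}.

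The one genuine subtlety — and what I expect to be the main obstacle — is bookkeeping of normalizations so that the quadratic form appearing on the spectral side is literally the form $(\cdot,\cdot) = \gamma\kappa|_{\mathfrak t(\mathbb R)\times\mathfrak t(\mathbb R)}$ used to define the fundamental weights and $\eta$, and not some rescaled version. One must check that the $g$-orthonormal basis $\{X_j\}$ of $\mathfrak g$ produces, after complexification, a basis in which the Casimir's "Cartan part" $\sum_i H_i^2$ acts on a weight vector of weight $\mu$ by $(\mu,\mu)$ with the \emph{same} pairing, and that the root-vector normalization $(E_\alpha, E_{-\alpha})=1$ is consistent with $[E_\alpha,E_{-\alpha}] = H_\alpha$ where $H_\alpha$ is the element dual to $\alpha$ under this pairing. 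Since $g$ is a fixed multiple of $-\kappa$ and all of $\overline\omega_j$, $\eta$, and $(\cdot,\cdot)$ are built from this \emph{same} multiple, the constants cancel coherently, but this has to be verified once and for all; this is precisely the content already established in \cite[Theorem 5.20]{BeSvir2010}, so one may either reproduce the short computation or simply cite it. The remaining steps — invariance of $\Omega$, Schur's lemma, and the highest-weight evaluation — are routine.
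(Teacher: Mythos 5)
The paper does not prove this proposition at all: it is stated with the attribution \textnormal{(\cite[Theorem 5.20]{BeSvir2010})} and no \texttt{proof} environment follows, so the intended ``proof'' in this manuscript is simply a citation. You instead supply an actual argument, which is the standard Casimir computation: identify $-\tfrac1d\sum_j\operatorname{Tr}[\rho_0(X_j)^2]$ from Proposition~\ref{eigenvalue:theorem} as (minus) the Casimir eigenvalue on the irreducible module, invoke Schur's lemma, complexify, and evaluate the Casimir on a highest-weight vector to get the quadratic expression $(\tilde\nu+\eta,\tilde\nu+\eta)-(\eta,\eta)$. That is a legitimate and self-contained derivation, and in that sense it is \emph{more} informative than what the paper offers.

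Two points you should tighten if you want this to stand on its own rather than devolving back into ``cite [BeSvir2010]''. First, the normalization/sign bookkeeping you flag as ``the one genuine subtlety'' really is the whole game here, and you should carry it out rather than defer it: the form $(\cdot,\cdot)=\gamma\kappa|_{\mathfrak t(\mathbb R)\times\mathfrak t(\mathbb R)}$ in this paper is (per the displayed identity $(\cdot,\cdot)=-\tfrac{1}{b\gamma}\langle\cdot,\cdot\rangle$) a \emph{negative} multiple of the Euclidean pairing $\langle\cdot,\cdot\rangle$, while the basis $\{X_j\}$ is orthonormal for the \emph{positive}-definite form $g=-\gamma\kappa$ on $\mathfrak g$. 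When you complexify and write $\Omega=\sum_i H_i^2+\sum_{\alpha>0}(E_\alpha E_{-\alpha}+E_{-\alpha}E_\alpha)$, the Cartan piece acts on a weight-$\mu$ vector by $\sum_i\mu(H_i)^2$, and one must check whether this equals $(\mu,\mu)$ or $-(\mu,\mu)$ with respect to the paper's $(\cdot,\cdot)$; the same sign question reappears in $[E_\alpha,E_{-\alpha}]=H_\alpha$. If the signs are traced honestly, the Casimir eigenvalue comes out with exactly the sign needed so that $\lambda_{\tilde\nu}=(\eta,\eta)-(\tilde\nu+\eta,\tilde\nu+\eta)$ is positive (since $(\cdot,\cdot)$ is negative definite here), but with the convention you implicitly used in the sketch (positive-definite $(\cdot,\cdot)$, Cartan piece $=+(\tilde\nu,\tilde\nu)$, root piece $=+2(\tilde\nu,\eta)$) you would land on $\lambda_{\tilde\nu}=(\tilde\nu+\eta,\tilde\nu+\eta)-(\eta,\eta)$, i.e.\ the opposite sign. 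This is not a fatal error in the idea, but as written the sketch does not actually reproduce the paper's formula with the paper's conventions; a careful sign chase is required. Second, a minor but worthwhile clarification: the passage from the real representation $\rho_0$ of $\mathfrak g$ to the complex highest-weight module of $\mathfrak k$ needs a sentence, since the trace in Proposition~\ref{eigenvalue:theorem} is over the real module $V$ while Schur's lemma and the highest-weight evaluation take place in the complexified picture; the paper's surrounding text already sets up this correspondence, so a pointer suffices.
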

\begin{remark}\label{nonsimply:connected}
It is well-known that if $p:M\rightarrow N$ is a Riemannian submersion with totally geodesic fibers and $\varphi$ is an eigenfunction of the Laplacian operator in $N$, then $\varphi\circ \pi$ is an eigenfunction of the Laplacian operator in $M$ corresponding to the same eigenvalue \cite{Berger1971}. As a consequence of this fact we have that $\textnormal{Spec}(N)\subseteq\textnormal{Spec}(M)$. In particular, if $G$ is a connected (not necessarily simply-connected) compact simple Lie group, then $G\cong \tilde{G}/\Gamma$, where $\tilde{G}$ is a connected simply-connected compact simple Lie group (diffeomorphic to the universal cover of $G$) and $\Gamma$ is a discrete subgroup of $\tilde{G}$,  every bi-invariant metric $g$ on $G$ is induced by a bi-invariant metric $\tilde{g}$ on $\tilde{G}$ in such a way that the projection $\pi:\tilde{G}\rightarrow \tilde{G}/\Gamma\cong G$ is a Riemannian submersion with totally geodesic fibers. This allows us to conclude that, in this situation we have \begin{equation}\label{spectrum:comparision}
  \textnormal{Spec}(G)\subseteq\textnormal{Spec}(\tilde{G}),
\end{equation}
\end{remark}
With the notations in \cite[Chapter 12]{Humphreys}, we can construct all the irreducible root systems starting from an Euclidean space endowed with an inner product $\langle\cdot,\cdot\rangle$ and a $\langle\cdot,\cdot\rangle$-orthonormal basis $\{\varepsilon_j\}$. From this construction we have that $(\cdot,\cdot)=-\frac{1}{b\gamma}\langle\cdot,\cdot\rangle$ with $b=\langle\tilde{\alpha},\tilde{\alpha}\rangle+2\langle\tilde{\alpha},\eta\rangle$ and $\tilde{\alpha}$  the highest long root so that 
\begin{equation}\label{eigenvalue:formula:3}
    \lambda_{\tilde{\nu}}=(\eta,\eta)-(\tilde{\nu}+\eta,\tilde{\nu}+\eta)= \frac{1}{b\gamma}\left[\langle\tilde{\nu}+\eta,\tilde{\nu}+\eta\rangle-\langle\eta,\eta\rangle\right].
\end{equation}
We shall use the formula in \eqref{eigenvalue:formula:3} to compute explicitly the eigenvalues of $\mathcal{L}_G$ for the connected and simply-connected compact simple Lie groups classified by their root system.\\

\noindent $\bullet$ {\it Diagram} $A_\ell.$ In this case, one has that $G\cong \textnormal{SU}(\ell+1).$ The spectrum of the Laplacian is computed in the following theorem. 
\begin{theorem}\label{spectrum:theorem:Al} The spectrum of the Laplacian operator on \textnormal{SU($\ell+1$) ($\ell\geq 1$)} is given by
\begin{equation*}
\textnormal{Spec(SU(}\ell+1\textnormal{))}=\{\lambda_\nu:\nu=(\nu_1,...,\nu_\ell)\in\mathbb{N}^\ell\},
\end{equation*}
where
\begin{align}
    \lambda_\nu=&\frac{1}{2(\ell+1)^3\gamma}\left[\left(\sum\limits_{j=1}^\ell(\ell+1-j)\nu_j\right)^2\right.+\sum\limits_{k=2}^\ell\left(-\sum\limits_{j=1}^{k-1}j\nu_j+\sum\limits_{j=k}^\ell(\ell+1-j)\nu_j\right)^2 \nonumber\\
    &\hspace{2cm}+\left.\left(\sum\limits_{j=1}^\ell j\nu_j\right)^2-\frac{1}{4}\sum\limits_{k=1}^{\ell+1}(\ell-2(k-1))^2(\ell+1)^2\right].
\end{align}
\end{theorem}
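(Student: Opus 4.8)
The plan is to specialise the general eigenvalue formula \eqref{eigenvalue:formula:3} to the root system $A_\ell$ in its standard Euclidean realisation, following \cite[Chapter 12]{Humphreys}. First I would fix $\mathbb{R}^{\ell+1}$ with a $\langle\cdot,\cdot\rangle$-orthonormal basis $\varepsilon_1,\dots,\varepsilon_{\ell+1}$ and take $\Pi^+=\{\varepsilon_i-\varepsilon_j:1\le i<j\le \ell+1\}$, $\Sigma=\{\alpha_i=\varepsilon_i-\varepsilon_{i+1}:1\le i\le \ell\}$, and fundamental weights $\overline\omega_i=\sum_{k=1}^i\varepsilon_k-\frac{i}{\ell+1}\sum_{k=1}^{\ell+1}\varepsilon_k$. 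Counting how often each $\varepsilon_i$ occurs among the positive roots gives $\eta=\frac12\sum_{\alpha\in\Pi^+}\alpha=\sum_{i=1}^{\ell+1}\frac{\ell+2-2i}{2}\varepsilon_i$, and since the highest long root is $\tilde\alpha=\varepsilon_1-\varepsilon_{\ell+1}$ one computes $b=\langle\tilde\alpha,\tilde\alpha\rangle+2\langle\tilde\alpha,\eta\rangle=2+2\ell=2(\ell+1)$.

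The key point, which simultaneously explains the parametrisation by $\nu\in\mathbb{N}^\ell$ and linearises the computation, is the identity $\eta=\sum_{j=1}^\ell\overline\omega_j$ (verified directly in the coordinates above, or deduced from $(\eta,\alpha_j)=\tfrac12(\alpha_j,\alpha_j)$ together with the defining relations of the $\overline\omega_j$). Consequently, for a highest weight $\tilde\nu=\sum_{j=1}^\ell\tilde\nu_j\overline\omega_j$ with $\tilde\nu_j\in\mathbb{N}_0$ one has $\tilde\nu+\eta=\sum_{j=1}^\ell(\tilde\nu_j+1)\overline\omega_j$; writing $\nu_j:=\tilde\nu_j+1$, so that $\nu=(\nu_1,\dots,\nu_\ell)\in\mathbb{N}^\ell$ and conversely every such $\nu$ arises this way, and setting $\Lambda:=\sum_{j=1}^\ell\nu_j\overline\omega_j$, formula \eqref{eigenvalue:formula:3} becomes $\lambda_\nu=\frac{1}{2(\ell+1)\gamma}\big(\langle\Lambda,\Lambda\rangle-\langle\eta,\eta\rangle\big)$. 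Next I would expand $\Lambda$ in the basis $\{\varepsilon_i\}$: its $\varepsilon_i$-coordinate equals $\sum_{j\ge i}\nu_j-\frac{1}{\ell+1}\sum_{j=1}^\ell j\nu_j$, and multiplying by $\ell+1$ turns it into the integer linear form $L_i:=-\sum_{j=1}^{i-1}j\nu_j+\sum_{j=i}^\ell(\ell+1-j)\nu_j$, with boundary cases $L_1=\sum_{j=1}^\ell(\ell+1-j)\nu_j$ and $L_{\ell+1}=-\sum_{j=1}^\ell j\nu_j$. Since $\Lambda$ lies in the hyperplane $\{\sum x_i=0\}$, this gives $\langle\Lambda,\Lambda\rangle=\frac{1}{(\ell+1)^2}\sum_{i=1}^{\ell+1}L_i^2$.

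Substituting this into the expression for $\lambda_\nu$ yields $\lambda_\nu=\frac{1}{2(\ell+1)^3\gamma}\big(\sum_{i=1}^{\ell+1}L_i^2-(\ell+1)^2\langle\eta,\eta\rangle\big)$, and it only remains to record $\langle\eta,\eta\rangle=\sum_{i=1}^{\ell+1}\big(\tfrac{\ell+2-2i}{2}\big)^2=\tfrac14\sum_{k=1}^{\ell+1}(\ell-2(k-1))^2$; peeling off the $i=1$ and $i=\ell+1$ summands of $\sum_i L_i^2$ then reproduces verbatim the formula in the statement. Finally, by Theorem \ref{Cartan:theorem} and Proposition \ref{eigenvalue:theorem} the nonzero eigenvalues of $\mathcal{L}_G$ are exactly the $\lambda_{\tilde\nu}$ with $\tilde\nu$ ranging over all highest weights, and $\nu_j=\tilde\nu_j+1$ is a bijection onto $\mathbb{N}^\ell$ (the value $\nu=(1,\dots,1)$, i.e.\ $\tilde\nu=0$, giving $\lambda=0$), which gives the claimed description of the full spectrum. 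I expect no conceptual obstacle: the whole argument is explicit linear algebra in $\mathbb{R}^{\ell+1}$, and the only place demanding care is the index bookkeeping in the two coordinate computations — in particular handling the empty sums at $i=1$ and $i=\ell+1$ when splitting $\sum_{i=1}^{\ell+1}L_i^2$ into the three displayed blocks.
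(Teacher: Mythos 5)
Your proposal is correct and reaches exactly the stated formula, but it streamlines the computation by exploiting the classical Weyl-vector identity $\eta=\sum_{j=1}^{\ell}\overline\omega_j$ (equivalently, $(\eta,\alpha_j)=\tfrac12(\alpha_j,\alpha_j)$ for each simple root), which makes the shift $\tilde\nu+\eta=\sum_j(\tilde\nu_j+1)\overline\omega_j=\sum_j\nu_j\overline\omega_j$ immediate. The paper's proof does not invoke this identity: it first expands $\tilde\nu$ in the $\{\varepsilon_k\}$-basis via coefficients $c_k$, then adds $\eta$ coordinate-by-coordinate, obtaining expressions of the form $2c_k+(\ell-2(k-1))(\ell+1)$, and only afterwards substitutes $\nu_j=\tilde\nu_j+1$ and carries out three separate telescoping simplifications to show the constant pieces cancel. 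Your route replaces that bookkeeping with one conceptual step and then a single clean $\{\varepsilon_i\}$-coordinate expansion of $\Lambda=\sum_j\nu_j\overline\omega_j$, reading off the integer forms $L_i$ and squaring. Both arguments are elementary and reach the same normalisation $b=2(\ell+1)$, the same $\eta$, and the same final display; yours is shorter and makes transparent \emph{why} the substitution $\nu_j=\tilde\nu_j+1$ eliminates all the $\eta$-dependent constant terms, namely because $\eta$ is itself the sum of the fundamental weights. The only caution is that you must (as you note) handle the empty sums at $i=1$ and $i=\ell+1$ when peeling those two terms off $\sum_{i=1}^{\ell+1}L_i^2$, and must use that $\Lambda$ lies in the hyperplane $\{\sum x_i=0\}$ so that the Euclidean norm on $\mathbb{R}^{\ell+1}$ computes $\langle\Lambda,\Lambda\rangle$ correctly; both points are correctly addressed in your sketch.
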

\begin{proof} The root system associated to SU($\ell+1$) is $A_\ell$. The standard realisation of $A_\ell$ is given by a $(\ell+1)-$dimensional Euclidean space with an inner product $\langle\cdot,\cdot\rangle$ and an orthonormal basis $\{\varepsilon_1,...,\varepsilon_{\ell+1}\}$. The sets of positive roots and simple positive roots are $$\Pi^+=\{\varepsilon_i-\varepsilon_j:1\leq i<j\leq \ell+1\}\ \textnormal{and}\ \Sigma=\{\alpha_j=\varepsilon_j-\varepsilon_{j+1}:1\leq j\leq \ell\}$$
respectively. The fundamental weights with respect to $\Sigma$ are $$\overline{\omega}_j=\frac{1}{\ell+1}\left[(\ell-j+1)(\varepsilon_1+\cdots+\varepsilon_j)-j(\varepsilon_{j+1}+\cdots+\varepsilon_{\ell+1})\right],\ j=1,...,\ell,$$
the highest root is
$$\tilde{\alpha}=\alpha_1+\cdots+\alpha_\ell=\varepsilon_1-\varepsilon_{\ell+1}$$
and $$\eta=\frac{1}{2}\sum\limits_{1\leq i<j\leq \ell+1}(\varepsilon_i-\varepsilon_j)=\frac{1}{2}\sum\limits_{k=1}^{\ell+1}(\ell-2(k-1))\varepsilon_k.$$ Then, we can compute
$$b=\langle\varepsilon_1-\varepsilon_{\ell+1},\varepsilon_1-\varepsilon_{\ell+1}\rangle+2\left\langle\varepsilon_1-\varepsilon_{\ell+1},\frac{1}{2}\sum\limits_{k=1}^{\ell+1}(\ell-2(k-1))\varepsilon_k\right\rangle=2\ell+2.$$
Consider a highest weight $\tilde{\nu}=\tilde{\nu}_1\overline{\omega}_1+\cdots+\tilde{\nu}_1\overline{\omega}_\ell$, then we have that
\begin{align*}\tilde{\nu}=&\frac{\left(\sum\limits_{j=1}^\ell(\ell+1-j)\tilde{\nu}_j\right)\varepsilon_1+\sum\limits_{k=2}^\ell\left(-\sum\limits_{j=1}^{k-1}j\tilde{\nu}_j+\sum\limits_{j=k}^\ell(\ell+1-j)\tilde{\nu}_j\right)\varepsilon_k-\left(\sum\limits_{j=1}^\ell j\tilde{\nu}_j\right)\varepsilon_{\ell+1}}{\ell+1}\\
=&\frac{\sum\limits_{k=1}^{\ell+1}c_k\varepsilon_k}{\ell+1},
\end{align*}
where $c_1:=\sum\limits_{j=1}^\ell(\ell+1-j)\tilde{\nu}_j,\ c_k:=-\sum\limits_{j=1}^{k-1}j\tilde{\nu}_j+\sum\limits_{j=k}^\ell(\ell+1-j)\tilde{\nu}_j,\ k=2,...,\ell,$ and $c_{\ell+1}:=-\sum\limits_{j=1}^\ell j\tilde{\nu}_j.$ Therefore \begin{align*}
&\tilde{\nu}+\eta=\sum\limits_{k=1}^{\ell+1}\left(\frac{2c_k}{2(\ell+1)}+\frac{(\ell-2(k-1))(\ell+1)}{2(\ell+1)}\right)\varepsilon_k,\\
&\langle\tilde{\nu}+\eta,\tilde{\nu}+\eta\rangle=\frac{1}{4(\ell+1)^2}\sum\limits_{k=1}^{\ell+1}\left(2c_k+(\ell-2(k-1))(\ell+1)\right)^2,\\
&\langle\eta,\eta\rangle=\frac{1}{4(\ell+1)^2}\sum\limits_{k=1}^{\ell+1}(\ell-2(k-1))^2(\ell+1)^2,
\end{align*}
and the eigenvalue $\lambda_{\tilde{\nu}}$ associated to $\tilde{\nu}$ is given by
\begin{align*}
    \lambda_{\tilde{\nu}}=&\frac{1}{b\gamma}\left[\langle\tilde{\nu}+\eta,\tilde{\nu}+\eta\rangle-\langle\eta,\eta\rangle\right]\\
    =&\frac{1}{8(\ell+1)^3\gamma}\left[\sum\limits_{k=1}^{\ell+1}\left(2c_k+(\ell-2(k-1))(\ell+1)\right)^2-\sum\limits_{k=1}^{\ell+1}(\ell-2(k-1))^2(\ell+1)^2\right]\\
    =&\frac{1}{8(\ell+1)^3\gamma}\left[\left(2\sum\limits_{j=1}^\ell(\ell+1-j)\tilde{\nu}_j+\ell(\ell+1)\right)^2\right.\\
    &\hspace{2cm}+\sum\limits_{k=2}^\ell\left(-2\sum\limits_{j=1}^{k-1}j\tilde{\nu}_j+2\sum\limits_{j=k}^\ell(\ell+1-j)\tilde{\nu}_j+(\ell-2(k-1))(\ell+1)\right)^2\\
    &\hspace{2cm}+\left.\left(2\sum\limits_{j=1}^\ell j\tilde{\nu}_j+\ell(\ell+1)\right)^2-\sum\limits_{k=1}^{\ell+1}(\ell-2(k-1))^2(\ell+1)^2\right].
\end{align*}
Let $\nu_j=\tilde{\nu}_j+1,\ j=1,...,\ell.$ The highest weight $\tilde{\nu}$ is uniquely determined by $\nu=(\nu_1,...,\nu_l)\in\mathbb{N}^\ell$. Let us write $\lambda_{\tilde{\nu}}$ in terms of $\nu:$ 
\begin{align*}
    2\sum\limits_{j=1}^\ell(\ell+1-j)\tilde{\nu}_j+\ell(\ell+1)=&2\sum\limits_{j=1}^\ell(\ell+1-j)(\nu_j-1)+\ell(\ell+1)\\
    =&2\sum\limits_{j=1}^\ell(\ell+1-j)\nu_j-2\sum\limits_{j=1}^\ell(\ell+1-j)+\ell(\ell+1)\\
    =&2\sum\limits_{j=1}^\ell(\ell+1-j)\nu_j-\ell(\ell+1)+\ell(\ell+1)\\
    =&2\sum\limits_{j=1}^\ell(\ell+1-j)\nu_j,
\end{align*}
\begin{align*}
    &-2\sum\limits_{j=1}^{k-1}j\tilde{\nu}_j+2\sum\limits_{j=k}^\ell(\ell+1-j)\tilde{\nu}_j+(\ell-2(k-1))(\ell+1)\\
    =&-2\sum\limits_{j=1}^{k-1}j(\nu_j-1)+2\sum\limits_{j=k}^\ell(\ell+1-j)(\nu_j-1)+(\ell-2(k-1))(\ell+1)\\
    =&-2\sum\limits_{j=1}^{k-1}j\nu_j+2\sum\limits_{j=k}^\ell(\ell+1-j)\nu_j+2\sum\limits_{j=1}^{k-1}j-2\sum\limits_{j=k}^\ell(\ell+1-j)\\
    &+(\ell-2(k-1))(\ell+1)\\
    =&-2\sum\limits_{j=1}^{k-1}j\nu_j+2\sum\limits_{j=k}^\ell(\ell+1-j)\nu_j+k(k-1)-(\ell+1-k)(\ell+2-k)\\
    &+(\ell-2(k-1))(\ell+1)\\
    =&-2\sum\limits_{j=1}^{k-1}j\nu_j+2\sum\limits_{j=k}^\ell(\ell+1-j)\nu_j-(\ell-2(k-1))(\ell+1)+(\ell-2(k-1))(\ell+1)\\
    =&-2\sum\limits_{j=1}^{k-1}j\nu_j+2\sum\limits_{j=k}^\ell(\ell+1-j)\nu_j,
\end{align*}
\begin{align*}
    2\sum\limits_{j=1}^\ell j\tilde{\nu}_j+\ell(\ell+1)=&2\sum\limits_{j=1}^\ell j(\nu_j-1)+\ell(\ell+1)\\
    =&2\sum\limits_{j=1}^\ell j\nu_j-2\sum\limits_{j=1}^\ell j+\ell(\ell+1)\\
    =&2\sum\limits_{j=1}^\ell j\nu_j-\ell(\ell+1)+\ell(\ell+1)\\
    =&2\sum\limits_{j=1}^\ell j\nu_j,\hspace{8.1cm}
\end{align*}
then
\begin{align*}
    \lambda_{\tilde{\nu}}=&\frac{1}{8(\ell+1)^3\gamma}\left[\left(2\sum\limits_{j=1}^\ell(\ell+1-j)\tilde{\nu}_j+\ell(\ell+1)\right)^2\right.\\
    &\hspace{2cm}+\sum\limits_{k=2}^\ell\left(-2\sum\limits_{j=1}^{k-1}j\tilde{\nu}_j+2\sum\limits_{j=k}^\ell(\ell+1-j)\tilde{\nu}_j+(\ell-2(k-1))(\ell+1)\right)^2\\
    &\hspace{2cm}+\left.\left(2\sum\limits_{j=1}^\ell j\tilde{\nu}_j+\ell(\ell+1)\right)^2-\sum\limits_{k=1}^{\ell+1}(\ell-2(k-1))^2(\ell+1)^2\right]\\
    =&\frac{1}{8(\ell+1)^3\gamma}\left[\left(2\sum\limits_{j=1}^\ell(\ell+1-j)\nu_j\right)^2\right.+\sum\limits_{k=2}^\ell\left(-2\sum\limits_{j=1}^{k-1}j\nu_j+2\sum\limits_{j=k}^\ell(\ell+1-j)\nu_j\right)^2\\
    &\hspace{2cm}+\left.\left(2\sum\limits_{j=1}^\ell j\nu_j\right)^2-\sum\limits_{k=1}^{\ell+1}(\ell-2(k-1))^2(\ell+1)^2\right]\\
    =&\frac{1}{2(\ell+1)^3\gamma}\left[\left(\sum\limits_{j=1}^\ell(\ell+1-j)\nu_j\right)^2\right.+\sum\limits_{k=2}^\ell\left(-\sum\limits_{j=1}^{k-1}j\nu_j+\sum\limits_{j=k}^\ell(\ell+1-j)\nu_j\right)^2\\
    &\hspace{2cm}+\left.\left(\sum\limits_{j=1}^\ell j\nu_j\right)^2-\frac{1}{4}\sum\limits_{k=1}^{\ell+1}(\ell-2(k-1))^2(\ell+1)^2\right].
\end{align*}
Since $\tilde{\nu}_j\geq 0$, then $\nu_j\geq 1.$ By setting $\nu=(\nu_1,...,\nu_\ell)$ and $\lambda_\nu=\lambda_{\tilde{\nu}}$ we obtain the result.
\end{proof}
\noindent $\bullet$ {\it Diagram} $B_\ell$ In this case, one has that $G\cong \textnormal{Spin}(2\ell+1).$ The spectrum of the Laplacian is computed in the following theorem. We exclude the case  $\ell=1$  since $\textnormal{Spin(3)}\cong \textnormal{SU}(2)\cong \mathbb{S}^3$ and the Strichartz estimates in the case of the spheres have been analysed in the work of Burq, Gerard, and Tzvetkov \cite{BurqGerardTzvetkov2004}.
\begin{theorem}\label{spectrum:theorem:Bl} The spectrum of the Laplacian operator on \textnormal{Spin(}$2\ell+1$\textnormal{) (}$\ell\geq 2$\textnormal{)} is given by
\begin{equation*}
\textnormal{Spec(Spin(}2\ell+1\textnormal{))}=\{\lambda_\nu:\nu=(\nu_1,...,\nu_\ell)\in\mathbb{N}^\ell\},
\end{equation*}
where
\begin{equation}\label{spectrum:Bl}
\lambda_\nu=\frac{1}{(16\ell-8)\gamma}\left(\sum\limits_{k=1}^{\ell-1}\left(2\sum\limits_{j=k}^{\ell-1}\nu_j+\nu_\ell\right)^2+\nu_\ell^2-\sum\limits_{k=1}^{\ell}\left(2(\ell-k)+1\right)^2\right).
\end{equation}
\end{theorem}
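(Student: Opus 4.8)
The plan is to follow, step by step, the template already used in the proof of Theorem~\ref{spectrum:theorem:Al}, only now with the root data of the $B_\ell$ system taken from \cite[Chapter~12]{Humphreys}. First I would record the standard realisation of $B_\ell$ in the $\ell$-dimensional Euclidean space $(\mathbb{R}^\ell,\langle\cdot,\cdot\rangle)$ with orthonormal basis $\{\varepsilon_1,\dots,\varepsilon_\ell\}$: the positive roots are $\{\varepsilon_i-\varepsilon_j,\ \varepsilon_i+\varepsilon_j:1\le i<j\le\ell\}\cup\{\varepsilon_i:1\le i\le\ell\}$, the simple roots are $\alpha_j=\varepsilon_j-\varepsilon_{j+1}$ for $1\le j\le\ell-1$ and $\alpha_\ell=\varepsilon_\ell$, the fundamental weights are $\overline{\omega}_j=\varepsilon_1+\cdots+\varepsilon_j$ for $j\le\ell-1$ and $\overline{\omega}_\ell=\tfrac12(\varepsilon_1+\cdots+\varepsilon_\ell)$, the highest long root is $\tilde{\alpha}=\varepsilon_1+\varepsilon_2$, and $\eta=\tfrac12\sum_{\alpha\in\Pi^+}\alpha=\sum_{k=1}^\ell\bigl(\ell-k+\tfrac12\bigr)\varepsilon_k$.

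Next I would evaluate the geometric constant appearing in \eqref{eigenvalue:formula:3}: since $\langle\tilde{\alpha},\tilde{\alpha}\rangle=2$ and $\langle\tilde{\alpha},\eta\rangle=(\ell-\tfrac12)+(\ell-\tfrac32)=2\ell-2$, we obtain $b=\langle\tilde{\alpha},\tilde{\alpha}\rangle+2\langle\tilde{\alpha},\eta\rangle=4\ell-2$, so that $\lambda_{\tilde\nu}=\tfrac{1}{(4\ell-2)\gamma}\bigl[\langle\tilde\nu+\eta,\tilde\nu+\eta\rangle-\langle\eta,\eta\rangle\bigr]$ by \eqref{eigenvalue:formula:2}.

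For a highest weight $\tilde\nu=\sum_{j=1}^\ell\tilde\nu_j\overline{\omega}_j$ with $\tilde\nu_j\in\mathbb{N}_0$, I would expand in the $\varepsilon$-basis to get $\tilde\nu=\sum_{k=1}^\ell\bigl(\sum_{j=k}^{\ell-1}\tilde\nu_j+\tfrac{\tilde\nu_\ell}{2}\bigr)\varepsilon_k$, whence $\tilde\nu+\eta=\sum_{k=1}^\ell\bigl(\sum_{j=k}^{\ell-1}\tilde\nu_j+\tfrac{\tilde\nu_\ell}{2}+\ell-k+\tfrac12\bigr)\varepsilon_k$. The decisive simplification — exactly as in the $A_\ell$ case — is the reindexing $\nu_j:=\tilde\nu_j+1$: a direct check gives $\sum_{j=k}^{\ell-1}(\nu_j-1)+\tfrac{\nu_\ell-1}{2}+\ell-k+\tfrac12=\sum_{j=k}^{\ell-1}\nu_j+\tfrac{\nu_\ell}{2}$, i.e. the Weyl shift $\eta$ is completely absorbed, so $\tilde\nu+\eta=\sum_{k=1}^\ell\bigl(\sum_{j=k}^{\ell-1}\nu_j+\tfrac{\nu_\ell}{2}\bigr)\varepsilon_k$. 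Then $\langle\tilde\nu+\eta,\tilde\nu+\eta\rangle=\tfrac14\sum_{k=1}^\ell\bigl(2\sum_{j=k}^{\ell-1}\nu_j+\nu_\ell\bigr)^2$ — the $k=\ell$ summand being $\tfrac{\nu_\ell^2}{4}$ — and $\langle\eta,\eta\rangle=\tfrac14\sum_{k=1}^\ell(2(\ell-k)+1)^2$. Substituting into the formula for $\lambda_{\tilde\nu}$, splitting off the $k=\ell$ term, and using $\tfrac{1}{4(4\ell-2)\gamma}=\tfrac{1}{(16\ell-8)\gamma}$ yields precisely \eqref{spectrum:Bl}. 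Finally, $\tilde\nu_j\ge0$ is equivalent to $\nu_j\ge1$, so the eigenvalues are parametrised exactly by $\nu=(\nu_1,\dots,\nu_\ell)\in\mathbb{N}^\ell$; combined with Proposition~\ref{eigenvalue:theorem} and the highest-weight parametrisation of $\mathrm{Spec}(\mathcal{L}_G)$ recalled before the statement, this identifies the full spectrum.

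The one place that needs care is the bookkeeping with the half-integers forced by the spin fundamental weight $\overline{\omega}_\ell=\tfrac12(\varepsilon_1+\cdots+\varepsilon_\ell)$ — this is exactly what makes the relevant group $\mathrm{Spin}(2\ell+1)$ rather than $\mathrm{SO}(2\ell+1)$, and it is responsible for the overall factor $\tfrac14$ and the isolated $\nu_\ell^2$ term — together with correctly singling out $\tilde{\alpha}=\varepsilon_1+\varepsilon_2$ as the highest \emph{long} root so that $b$ comes out as $4\ell-2$. Everything else is the same elementary algebra already carried out in the proof of Theorem~\ref{spectrum:theorem:Al}.
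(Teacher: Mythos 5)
Your proof is correct and follows essentially the same route as the paper: same standard realisation of $B_\ell$, same computation of $\eta$, $\tilde\alpha$, and $b=4\ell-2$, and the same substitution $\nu_j=\tilde\nu_j+1$, with the only (harmless) reorganisation being that you absorb the Weyl shift into $\tilde\nu+\eta$ before squaring rather than after. One tiny slip: the eigenvalue formula you invoke at the end of the second paragraph is \eqref{eigenvalue:formula:3}, not \eqref{eigenvalue:formula:2}, since you are already working with the auxiliary Euclidean inner product $\langle\cdot,\cdot\rangle$.
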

\begin{proof} The root system associated to Spin($2\ell+1$) is  $B_\ell$ which can be constructed by considering a $\langle\cdot,\cdot\rangle$-orthonormal basis $\{\varepsilon_1,...,\varepsilon_\ell\}$ on a $\ell$-dimensional Euclidean space. The positive roots are $\varepsilon_j,\ 1\leq j\leq \ell$ (the short roots) and $\varepsilon_i\pm\varepsilon_j,\ 1\leq i<j\leq \ell$ (the long roots). The set $\Sigma=\{\alpha_1=\varepsilon_1-\varepsilon_2,,...,\alpha_{\ell-1}=\varepsilon_{\ell-1}-\varepsilon_\ell,\alpha_\ell=\varepsilon_\ell\}$ is a system of simple positive roots and the corresponding fundamental weights are $\overline{\omega}_j=\varepsilon_1+\cdots+\varepsilon_j,$ $1\leq j<\ell$ and $\overline{\omega}_\ell=\frac{\varepsilon_1+\cdots+\varepsilon_\ell}{2}$. The highest long root is $\tilde{\alpha}=\alpha_1+2(\alpha_2+\cdots+\alpha_\ell)=\varepsilon_1+\varepsilon_2$ and $$\eta=\frac{1}{2}\left(\sum\limits_{j=1}^\ell\varepsilon_j+\sum\limits_{1\leq i<j\leq \ell}(\varepsilon_i+\varepsilon_j)+\sum\limits_{1\leq i<j\leq \ell}(\varepsilon_i-\varepsilon_j)\right)=\frac{1}{2}\left(\sum\limits_{k=1}^\ell(2(\ell-k)+1)\varepsilon_k\right),$$ thus
$$b=\langle\varepsilon_1+\varepsilon_2,\varepsilon_1+\varepsilon_2\rangle+2\left\langle\varepsilon_1+\varepsilon_2,\frac{1}{2}\left(\sum\limits_{k=1}^\ell(2(\ell-k)+1)\varepsilon_k\right)\right\rangle=4\ell-2.$$
Given a highest weight $$\tilde{\nu}=\tilde{\nu}_1\overline{\omega}_1+\cdots+\tilde{\nu}_\ell\overline{\omega}_\ell=\sum\limits_{k=1}^{\ell-1}\left[\sum\limits_{j=k}^{\ell-1}\tilde{\nu}_j+\frac{\tilde{\nu}_\ell}{2}\right]\varepsilon_k+\frac{\tilde{\nu}_\ell}{2}\varepsilon_\ell,\ \tilde{\nu}_j\in\mathbb{N}_0,\ j=1,...,\ell;$$ we have (by formula \eqref{eigenvalue:formula:3}) that the eigenvalue $\lambda_{\tilde{\nu}}$ associated to $\tilde{\nu}$ is 
\begin{equation*}
\lambda_{\tilde{\nu}}=\frac{1}{(4\ell-2)\gamma}\left[\langle\tilde{\nu}+\eta,\tilde{\nu}+\eta\rangle-\langle\eta,\eta\rangle\right]
\end{equation*}
but
$$\tilde{\nu}+\eta=\sum\limits_{k=1}^{\ell-1}\left\{\left[\sum\limits_{j=k}^{\ell-1}\tilde{\nu}_j+\frac{\tilde{\nu}_\ell}{2}\right]+\frac{(2(\ell-k)+1)}{2}\right\}\varepsilon_k+\frac{\tilde{\nu}_\ell+1}{2}\varepsilon_\ell$$
so that
\begin{align*}
\langle\tilde{\nu}+\eta,\tilde{\nu}+\eta\rangle=&\frac{1}{4}\left\{\sum\limits_{k=1}^{\ell-1}\left(2\sum\limits_{j=k}^{\ell-1}\tilde{\nu}_j+\tilde{\nu}_\ell+(2(\ell-k)+1)\right)^2+(\tilde{\nu}_\ell+1)^2\right\}.
\end{align*}
On the other hand
$$\langle\eta,\eta\rangle=\frac{1}{4}\sum\limits_{k=1}^{l}\left(2(l-k)+1\right)^2,$$
hence
\begin{align*}
\lambda_{\tilde{\nu}}=&\frac{1}{(16\ell-8)\gamma}\left(\sum\limits_{k=1}^{\ell-1}\left(2\sum\limits_{j=k}^{\ell-1}\tilde{\nu}_j+\tilde{\nu}_\ell+(2(\ell-k)+1)\right)^2+(\tilde{\nu}_\ell+1)^2\right.\\
&\hspace{2cm}-\left.\sum\limits_{k=1}^{\ell}\left(2(\ell-k)+1\right)^2\right).
\end{align*}
If $\nu_j:=\tilde{\nu}_j+1,\ j=1,...,\ell$, $\nu:=(\nu_1,...,\nu_\ell)\in\mathbb{N}^\ell,$ and $\lambda_\nu:=\lambda_{\tilde{\nu}}$ we have
\begin{align*}
\lambda_\nu=\lambda_{\tilde{\nu}}=&\frac{1}{(16\ell-8)\gamma}\left(\sum\limits_{k=1}^{\ell-1}\left(2\sum\limits_{j=k}^{\ell-1}\tilde{\nu}_j+\tilde{\nu}_\ell+(2(\ell-k)+1)\right)^2+(\tilde{\nu}_\ell+1)^2\right.\\
&\hspace{2cm}-\left.\sum\limits_{k=1}^{\ell}\left(2(\ell-k)+1\right)^2\right)\\
=&\frac{1}{(16\ell-8)\gamma}\left(\sum\limits_{k=1}^{\ell-1}\left(2\sum\limits_{j=k}^{\ell-1}(\nu_j-1)+(\nu_\ell-1)+(2(\ell-k)+1)\right)^2+\nu_\ell^2\right.\\
&\hspace{2cm}-\left.\sum\limits_{k=1}^{\ell}\left(2(\ell-k)+1\right)^2\right)\\
=&\frac{1}{(16\ell-8)\gamma}\left(\sum\limits_{k=1}^{\ell-1}\left(2\sum\limits_{j=k}^{\ell-1}\nu_j-2\sum\limits_{j=k}^{\ell-1}1+\nu_\ell-1+(2(\ell-k)+1)\right)^2+\nu_\ell^2\right.\\
&\hspace{2cm}-\left.\sum\limits_{k=1}^{\ell}\left(2(\ell-k)+1\right)^2\right)\\
=&\frac{1}{(16\ell-8)\gamma}\left(\sum\limits_{k=1}^{\ell-1}\left(2\sum\limits_{j=k}^{\ell-1}\nu_j+\nu_\ell-2(\ell-k)-1+(2(\ell-k)+1)\right)^2+\nu_\ell^2\right.\\
&\hspace{2cm}-\left.\sum\limits_{k=1}^{\ell}\left(2(\ell-k)+1\right)^2\right)\\
=&\frac{1}{(16\ell-8)\gamma}\left(\sum\limits_{k=1}^{\ell-1}\left(2\sum\limits_{j=k}^{\ell-1}\nu_j+\nu_\ell\right)^2+\nu_\ell^2-\sum\limits_{k=1}^{\ell}\left(2(\ell-k)+1\right)^2\right),
\end{align*}completing the proof.
\end{proof}
\noindent $\bullet$ {\it Diagram} $C_{\ell}$. In this case, one has that $G\cong \textnormal{Sp}(\ell).$  Observe that $\textnormal{Sp(1)}\cong\textnormal{SU(2)}$ was treated in Theorem \ref{spectrum:theorem:Al} and $\textnormal{Sp(2)}\cong\textnormal{SO(5)}$ has a universal cover diffeomorphic to $\textnormal{Spin(5)}$ whose spectrum was described in Theorem \ref{spectrum:theorem:Bl}. For $\ell\geq 3$, the spectrum of the Laplacian is computed in the following theorem. 
\begin{theorem}\label{spectrum:theorem:Cl} The spectrum of the Laplacian operator on \textnormal{Sp(}$\ell$\textnormal{), (}$\ell\geq 3$\textnormal{)} is given by
\begin{equation*}
\textnormal{Spec(Sp(}\ell\textnormal{))}=\{\lambda_\nu:\nu=(\nu_1,...,\nu_\ell)\in\mathbb{N}^\ell\},
\end{equation*}
where
\begin{equation}
\lambda_\nu=\frac{1}{4(\ell+1)\gamma}\left[\sum\limits_{k=1}^{\ell}\left(\sum\limits_{j=k}^\ell\nu_j\right)^2-\frac{\ell(\ell+1)(2\ell+1)}{6}\right].
\end{equation}
\end{theorem}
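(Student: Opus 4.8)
The plan is to follow the same computational template used for the $A_\ell$ and $B_\ell$ cases, now specialized to the root system $C_\ell$. First I would fix the standard realization of $C_\ell$ in an $\ell$-dimensional Euclidean space with $\langle\cdot,\cdot\rangle$-orthonormal basis $\{\varepsilon_1,\dots,\varepsilon_\ell\}$ (following Humphreys, Chapter 12): the positive roots are $\varepsilon_i\pm\varepsilon_j$ for $1\le i<j\le\ell$ together with the long roots $2\varepsilon_j$ for $1\le j\le\ell$, the simple roots are $\alpha_j=\varepsilon_j-\varepsilon_{j+1}$ for $1\le j\le\ell-1$ and $\alpha_\ell=2\varepsilon_\ell$, the fundamental weights are $\overline{\omega}_j=\varepsilon_1+\cdots+\varepsilon_j$ for $1\le j\le\ell$, and the highest long root is $\tilde\alpha=2\varepsilon_1$. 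Then I would compute $\eta=\tfrac12\sum_{\alpha\in\Pi^+}\alpha$; summing the contributions of $\varepsilon_i-\varepsilon_j$, $\varepsilon_i+\varepsilon_j$, and $2\varepsilon_j$ gives $\eta=\sum_{k=1}^\ell(\ell-k+1)\varepsilon_k$, so that $\langle\eta,\eta\rangle=\sum_{k=1}^\ell(\ell-k+1)^2=\tfrac{\ell(\ell+1)(2\ell+1)}{6}$. Next I would evaluate the normalization constant $b=\langle\tilde\alpha,\tilde\alpha\rangle+2\langle\tilde\alpha,\eta\rangle=\langle 2\varepsilon_1,2\varepsilon_1\rangle+2\langle 2\varepsilon_1,\eta\rangle=4+2\cdot2\ell=4\ell+4=4(\ell+1)$.

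The second block of the computation is to express a highest weight $\tilde\nu=\sum_{j=1}^\ell\tilde\nu_j\overline{\omega}_j$ in the $\varepsilon$-coordinates. Since $\overline{\omega}_j=\varepsilon_1+\cdots+\varepsilon_j$, one gets the clean expression $\tilde\nu=\sum_{k=1}^\ell\big(\sum_{j=k}^\ell\tilde\nu_j\big)\varepsilon_k$, hence $\tilde\nu+\eta=\sum_{k=1}^\ell\big(\sum_{j=k}^\ell\tilde\nu_j+(\ell-k+1)\big)\varepsilon_k$. Then I would plug into the eigenvalue formula \eqref{eigenvalue:formula:3}:
\begin{equation*}
\lambda_{\tilde\nu}=\frac{1}{b\gamma}\big[\langle\tilde\nu+\eta,\tilde\nu+\eta\rangle-\langle\eta,\eta\rangle\big]=\frac{1}{4(\ell+1)\gamma}\left[\sum_{k=1}^\ell\Big(\sum_{j=k}^\ell\tilde\nu_j+(\ell-k+1)\Big)^2-\frac{\ell(\ell+1)(2\ell+1)}{6}\right].
\end{equation*}
Finally I would perform the shift $\nu_j:=\tilde\nu_j+1$, noting that $\sum_{j=k}^\ell\tilde\nu_j+(\ell-k+1)=\sum_{j=k}^\ell(\tilde\nu_j+1)=\sum_{j=k}^\ell\nu_j$, which collapses the inner term exactly to $\sum_{j=k}^\ell\nu_j$ and yields the stated formula, with the index set $\nu=(\nu_1,\dots,\nu_\ell)\in\mathbb{N}^\ell$ since $\tilde\nu_j\ge0$ forces $\nu_j\ge1$. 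I would also remark that Theorem \ref{Cartan:theorem} together with Proposition \ref{eigenvalue:theorem} guarantees that these values exhaust $\textnormal{Spec}(\textnormal{Sp}(\ell))\setminus\{0\}$ and that $\textnormal{Sp}(\ell)$ is simply connected, so the formula \eqref{eigenvalue:formula:3} applies directly.

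I do not anticipate a genuine obstacle here — the proof is a bookkeeping exercise entirely parallel to the $A_\ell$ and $B_\ell$ cases already proved. The one place requiring mild care is the computation of $\eta$ and $b$ for $C_\ell$: one must not confuse the $C_\ell$ root data with the $B_\ell$ data (the roles of short and long roots are interchanged, and the long root $2\varepsilon_1$ rather than $\varepsilon_1+\varepsilon_2$ is the relevant highest long root), so getting $b=4(\ell+1)$ and $\langle\eta,\eta\rangle=\tfrac{\ell(\ell+1)(2\ell+1)}{6}$ right is the crux. The pleasant feature of $C_\ell$ is that the fundamental weights $\overline{\omega}_j=\varepsilon_1+\cdots+\varepsilon_j$ have no fractional coefficients (unlike the spin weight in $B_\ell$), so the shift $\nu_j=\tilde\nu_j+1$ telescopes the expression immediately without the extra algebraic manipulations that appeared in the $A_\ell$ proof.
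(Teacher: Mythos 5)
Your proposal is correct and follows the same route as the paper: you use the same standard realization of $C_\ell$, compute the same $\eta=\sum_{k=1}^\ell(\ell-k+1)\varepsilon_k$, $\langle\eta,\eta\rangle=\frac{\ell(\ell+1)(2\ell+1)}{6}$, and $b=4(\ell+1)$, write $\tilde\nu+\eta$ in $\varepsilon$-coordinates, apply the eigenvalue formula, and absorb the constant $\ell-k+1$ into the shift $\nu_j=\tilde\nu_j+1$ exactly as the paper does. The only cosmetic difference is that the paper expands $\tilde\alpha=2(\alpha_1+\cdots+\alpha_{\ell-1})+\alpha_\ell$ before identifying it with $2\varepsilon_1$, whereas you quote $\tilde\alpha=2\varepsilon_1$ directly; the computation of $b$ and everything else is identical.
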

\begin{proof} The root system associated to the group Sp($\ell$) is $C_\ell$ which is obtained by considering a $\ell$-dimensional Euclidean space with an inner product $\langle\cdot,\cdot\rangle$ and a orthonormal basis $\{\varepsilon_1,...,\varepsilon_\ell\}$. The simple positive roots are $\alpha_j=\varepsilon_j-\varepsilon_{j+1},\ j=1,...,\ell-1$ and $\alpha_\ell=2\varepsilon_\ell.$ The positive roots are $2\varepsilon_i,\ i=1,...,\ell,\ \varepsilon_i\pm\varepsilon_j,\ 1\leq i<j\leq \ell$ and the fundamental weights are $\overline{w}_j=\varepsilon_1+\cdots+\varepsilon_j,\ j=1,...,\ell.$ In this case
\begin{align*}
    \tilde{\alpha}=&2(\alpha_1+\cdots+\alpha_{\ell-1})+\alpha_\ell=2\varepsilon_1\ \textnormal{and}\\
    \eta=&\frac{1}{2}\left(\sum\limits_{1\leq i<j\leq \ell}(\varepsilon_i+\varepsilon_j)+\sum\limits_{1\leq i<j\leq \ell}(\varepsilon_i-\varepsilon_j)+\sum\limits_{i=1}^\ell2\varepsilon_i\right)\\
    =&\sum\limits_{k=1}^{\ell}(\ell-k+1)\varepsilon_k,
\end{align*}
thus
\begin{align*}
    b=&\langle\tilde{\alpha},\tilde{\alpha}\rangle+2\left\langle\tilde{\alpha},\eta\right\rangle\\
    =&4\langle\varepsilon_1,\varepsilon_1\rangle+4\left\langle\varepsilon_1,\sum\limits_{k=1}^{\ell}(\ell-k+1)\varepsilon_k\right\rangle\\
    =&4(\ell+1).
\end{align*}
A highest weight has the form
$$\tilde{\nu}=\sum\limits_{k=1}^\ell\tilde{\nu}_k\overline{\omega}_k=\sum\limits_{k=1}^\ell\tilde{\nu}_k\sum\limits_{j=1}^k\varepsilon_j=\sum\limits_{k=1}^\ell\sum\limits_{j=1}^k\tilde{\nu}_k\varepsilon_j=\sum\limits_{k=1}^{\ell}\left[\sum\limits_{j=k}^\ell\tilde{\nu}_j\right]\varepsilon_k,$$ so $$\tilde{\nu}+\eta=\sum\limits_{k=1}^{\ell}\left[\sum\limits_{j=k}^\ell\tilde{\nu}_j\right]\varepsilon_k+\sum\limits_{k=1}^{\ell}(\ell-k+1)\varepsilon_k=\sum\limits_{k=1}^{\ell}\left[\sum\limits_{j=k}^\ell\tilde{\nu}_j+(\ell-k+1)\right]\varepsilon_k$$ which implies $$\langle\tilde{\nu}+\eta,\tilde{\nu}+\eta\rangle=\sum\limits_{k=1}^{\ell}\left(\sum\limits_{j=k}^\ell\tilde{\nu}_j+(\ell-k+1)\right)^2.$$
On the other hand $$\langle\eta,\eta\rangle=\sum\limits_{k=1}^{\ell}(\ell-k+1)^2=\frac{\ell(\ell+1)(2\ell+1)}{6}.$$
Hence, the eigenvalue $\lambda_{\tilde{\nu}}$ associated to $\tilde{\nu}$ is
\begin{align*}
    \lambda_{\tilde{\nu}}=&\frac{1}{b\gamma}\left[\langle\tilde{\nu}+\eta,\tilde{\nu}+\eta\rangle-\langle\eta,\eta\rangle\right]\\
    =&\frac{1}{4(\ell+1)\gamma}\left[\sum\limits_{k=1}^{\ell}\left(\sum\limits_{j=k}^\ell\tilde{\nu}_j+(\ell-k+1)\right)^2-\frac{\ell(\ell+1)(2\ell+1)}{6}\right]\\
    =&\frac{1}{4(\ell+1)\gamma}\left[\sum\limits_{k=1}^{\ell}\left(\sum\limits_{j=k}^\ell\tilde{\nu}_j+\sum\limits_{j=k}^{\ell}1\right)^2-\frac{\ell(\ell+1)(2\ell+1)}{6}\right]\\
    =&\frac{1}{4(\ell+1)\gamma}\left[\sum\limits_{k=1}^{\ell}\left(\sum\limits_{j=k}^\ell(\tilde{\nu}_j+1)\right)^2-\frac{\ell(\ell+1)(2\ell+1)}{6}\right]\\
    =&\frac{1}{4(\ell+1)\gamma}\left[\sum\limits_{k=1}^{\ell}\left(\sum\limits_{j=k}^\ell\nu_j\right)^2-\frac{\ell(\ell+1)(2\ell+1)}{6}\right],\\
\end{align*}
where $\nu_j=\tilde{\nu}_j+1,\ j=1,...,\ell$. By setting $\nu:=(\nu_1,...,\nu_\ell)$ and $\lambda_\nu:=\lambda_{\tilde{\nu}}$ we complete the proof.
\end{proof}
\noindent $\bullet$ {\it Diagram} $D_\ell.$ In this case, one has that $G\cong \textnormal{Spin}(2\ell).$  Note that $\textnormal{Spin}(2)\cong \mathbb{S}^1$ was treated in \cite{BurqGerardTzvetkov2004}, $\textnormal{Spin}(4)\cong \textnormal{SU}(2)\times \textnormal{SU}(2)$ is not simple and $\textnormal{Spin}(6)\cong \textnormal{SU}(4)$ was considered in Theorem \ref{spectrum:theorem:Al}. The spectrum of the Laplacian is computed in the following theorem for $\ell\geq 4.$
\begin{theorem}\label{spectrum:theorem:Dl} The spectrum of the Laplacian operator on \textnormal{Spin(}$2\ell$\textnormal{), (}$\ell\geq 4$\textnormal{)} is given by
\begin{equation*}
\textnormal{Spec(Spin(}2\ell\textnormal{))}=\{\lambda_\nu:\nu=(\nu_1,...,\nu_\ell)\in\mathbb{N}^\ell\},
\end{equation*}
where
\begin{align}
\lambda_\nu=&\frac{1}{(16\ell-16)\gamma}\left(\sum\limits_{k=1}^{\ell-2}\left(2\sum\limits_{j=k}^{\ell-2}\nu_j+\nu_{\ell-1}+\nu_\ell\right)^2+(\nu_\ell+\nu_{\ell-1})^2+(\nu_\ell-\nu_{\ell-1})^2\right.\nonumber\\
&\hspace{2cm}-\left.\sum\limits_{k=1}^{\ell}\left(2(\ell-k)\right)^2\right).
\end{align}
\end{theorem}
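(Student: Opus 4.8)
The plan is to run, for the root system $D_\ell$, exactly the computation carried out above for $A_\ell$, $B_\ell$ and $C_\ell$: realise $D_\ell$ concretely in a Euclidean space, read off $\eta$ and the integer $b$, expand a highest weight $\tilde{\nu}$ in the standard orthonormal basis, evaluate the quadratic form appearing in \eqref{eigenvalue:formula:3}, and finally make the shift $\nu_j=\tilde{\nu}_j+1$ to obtain a clean parametrisation.

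First I would recall the standard realisation of $D_\ell$ on an $\ell$-dimensional Euclidean space with a $\langle\cdot,\cdot\rangle$-orthonormal basis $\{\varepsilon_1,\dots,\varepsilon_\ell\}$: the positive roots are $\varepsilon_i-\varepsilon_j$ and $\varepsilon_i+\varepsilon_j$ with $1\le i<j\le\ell$ (all of the same length, since $D_\ell$ is simply laced), a system of simple positive roots is $\Sigma=\{\alpha_j=\varepsilon_j-\varepsilon_{j+1}:1\le j\le\ell-1\}\cup\{\alpha_\ell=\varepsilon_{\ell-1}+\varepsilon_\ell\}$, and the fundamental weights are $\overline{\omega}_j=\varepsilon_1+\cdots+\varepsilon_j$ for $1\le j\le\ell-2$ together with the two spinor weights $\overline{\omega}_{\ell-1}=\tfrac12(\varepsilon_1+\cdots+\varepsilon_{\ell-1}-\varepsilon_\ell)$ and $\overline{\omega}_\ell=\tfrac12(\varepsilon_1+\cdots+\varepsilon_{\ell-1}+\varepsilon_\ell)$. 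Summing the positive roots, the coefficient of $\varepsilon_k$ in $\sum_{\alpha\in\Pi^+}\alpha$ is $[(\ell-k)-(k-1)]+[(\ell-k)+(k-1)]=2(\ell-k)$, so $\eta=\sum_{k=1}^{\ell}(\ell-k)\varepsilon_k$; note that the $\varepsilon_\ell$-coefficient of $\eta$ vanishes. The highest long root is $\tilde{\alpha}=\alpha_1+2(\alpha_2+\cdots+\alpha_{\ell-2})+\alpha_{\ell-1}+\alpha_\ell=\varepsilon_1+\varepsilon_2$, so that $b=\langle\tilde{\alpha},\tilde{\alpha}\rangle+2\langle\tilde{\alpha},\eta\rangle=2+2[(\ell-1)+(\ell-2)]=4\ell-4$.

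Next I would expand $\tilde{\nu}=\sum_{j=1}^{\ell}\tilde{\nu}_j\overline{\omega}_j$ in the $\varepsilon$-basis. Collecting coefficients gives coefficient $\sum_{j=k}^{\ell-2}\tilde{\nu}_j+\tfrac12(\tilde{\nu}_{\ell-1}+\tilde{\nu}_\ell)$ at $\varepsilon_k$ for $1\le k\le\ell-2$, coefficient $\tfrac12(\tilde{\nu}_{\ell-1}+\tilde{\nu}_\ell)$ at $\varepsilon_{\ell-1}$, and coefficient $\tfrac12(\tilde{\nu}_\ell-\tilde{\nu}_{\ell-1})$ at $\varepsilon_\ell$. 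Adding $\eta$ and multiplying through by $4$ to clear the denominators coming from the two spinor weights, I get
$$4\langle\tilde{\nu}+\eta,\tilde{\nu}+\eta\rangle=\sum_{k=1}^{\ell-2}\Big(2\sum_{j=k}^{\ell-2}\tilde{\nu}_j+\tilde{\nu}_{\ell-1}+\tilde{\nu}_\ell+2(\ell-k)\Big)^2+\big(\tilde{\nu}_{\ell-1}+\tilde{\nu}_\ell+2\big)^2+\big(\tilde{\nu}_\ell-\tilde{\nu}_{\ell-1}\big)^2,$$
while $4\langle\eta,\eta\rangle=\sum_{k=1}^{\ell}\big(2(\ell-k)\big)^2$. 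Since $\lambda_{\tilde{\nu}}=\tfrac1{b\gamma}\big[\langle\tilde{\nu}+\eta,\tilde{\nu}+\eta\rangle-\langle\eta,\eta\rangle\big]=\tfrac1{4b\gamma}\big[4\langle\tilde{\nu}+\eta,\tilde{\nu}+\eta\rangle-4\langle\eta,\eta\rangle\big]$ and $4b=16\ell-16$, this already matches the claimed formula up to the additive constants sitting inside the squares.

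Finally I would substitute $\nu_j=\tilde{\nu}_j+1$; since $\tilde{\nu}_j\in\mathbb{N}_0$, the tuple $\nu=(\nu_1,\dots,\nu_\ell)\in\mathbb{N}^\ell$ parametrises the highest weights bijectively, and, exactly as in the $B_\ell$ and $C_\ell$ computations, the additive constants cancel: in the $k$-th square $2\sum_{j=k}^{\ell-2}(\nu_j-1)+(\nu_{\ell-1}-1)+(\nu_\ell-1)+2(\ell-k)=2\sum_{j=k}^{\ell-2}\nu_j+\nu_{\ell-1}+\nu_\ell$, in the second square $(\nu_{\ell-1}-1)+(\nu_\ell-1)+2=\nu_{\ell-1}+\nu_\ell$, and in the third $(\nu_\ell-1)-(\nu_{\ell-1}-1)=\nu_\ell-\nu_{\ell-1}$. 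Setting $\lambda_\nu:=\lambda_{\tilde{\nu}}$ then yields the stated expression. There is no genuine obstacle beyond the bookkeeping; the two points that need care are (i) clearing the factor $2$ in the spinor fundamental weights, which is what turns $b=4\ell-4$ into the normalising constant $16\ell-16$, and (ii) the vanishing of the $\varepsilon_\ell$-coefficient of $\eta$, which is responsible for the asymmetric pair of terms $(\nu_\ell+\nu_{\ell-1})^2$ and $(\nu_\ell-\nu_{\ell-1})^2$ appearing in place of the single term $\nu_\ell^2$ that occurred in the $B_\ell$ case.
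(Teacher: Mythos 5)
Your proposal is correct and follows essentially the same route as the paper: the same standard realisation of $D_\ell$, the same $\eta$, $b=4\ell-4$, the same expansion of $\tilde{\nu}$ in the $\varepsilon$-basis, and the same shift $\nu_j=\tilde{\nu}_j+1$ to cancel the additive constants. The only cosmetic difference is that you clear the factor $4$ coming from the spinor weights up front instead of carrying the fractions through, which gives the same final normalisation $16\ell-16$.
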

\begin{proof} The root system associated to Spin($2\ell$) is  $D_\ell$ which can be constructed by considering a $\langle\cdot,\cdot\rangle$-orthonormal basis $\{\varepsilon_1,...,\varepsilon_\ell\}$ on a $\ell$-dimensional Euclidean space. The positive roots are $\varepsilon_i\pm\varepsilon_j,\ 1\leq i<j\leq \ell$. The set $\Sigma=\{\alpha_1=\varepsilon_1-\varepsilon_2,,...,\alpha_{\ell-1}=\varepsilon_{\ell-1}-\varepsilon_\ell,\alpha_\ell=\varepsilon_{\ell-1}+\varepsilon_\ell\}$ is a system of simple positive roots and the corresponding fundamental weights are $\overline{\omega}_j=\varepsilon_1+\cdots+\varepsilon_j,$ $1\leq j<\ell-1,$ $\overline{\omega}_{\ell-1}=\frac{\varepsilon_1+\cdots+\varepsilon_{\ell-1}-\varepsilon_\ell}{2},$ and $\overline{\omega}_{\ell}=\frac{\varepsilon_1+\cdots+\varepsilon_{\ell-1}+\varepsilon_\ell}{2}$. The highest root is $$\tilde{\alpha}=\alpha_1+2(\alpha_2+\cdots+\alpha_{\ell-2})+\alpha_{\ell-1}+\alpha_\ell=\varepsilon_1+\varepsilon_2$$ and $$\eta=\frac{1}{2}\left(\sum\limits_{1\leq i<j\leq \ell}(\varepsilon_i+\varepsilon_j)+\sum\limits_{1\leq i<j\leq \ell}(\varepsilon_i-\varepsilon_j)\right)=\sum\limits_{k=1}^\ell(\ell-k)\varepsilon_k,$$ thus
$$b=\langle\varepsilon_1+\varepsilon_2,\varepsilon_1+\varepsilon_2\rangle+2\left\langle\varepsilon_1+\varepsilon_2,\sum\limits_{k=1}^\ell(\ell-k)\varepsilon_k\right\rangle=4\ell-4.$$
A highest weight has the form $$\tilde{\nu}=\tilde{\nu}_1\overline{\omega}_1+\cdots+\tilde{\nu}_\ell\overline{\omega}_\ell=\sum\limits_{k=1}^{\ell-2}\left[\sum\limits_{j=k}^{\ell-2}\tilde{\nu}_j+\frac{\tilde{\nu}_{\ell-1}+\tilde{\nu}_{\ell}}{2}\right]\varepsilon_k+\frac{\tilde{\nu}_{\ell-1}+\tilde{\nu}_\ell}{2}\varepsilon_{\ell-1}+\frac{\tilde{\nu}_\ell-\tilde{\nu}_{\ell-1}}{2}\varepsilon_\ell,$$ with $\tilde{\nu}_j\in\mathbb{N}_0,\ j=1,...,\ell.$ Let us compute $\langle\tilde{\nu}+\eta,\tilde{\nu}+\eta\rangle$ and $\langle\eta,\eta\rangle:$

$$\tilde{\nu}+\eta=\sum\limits_{k=1}^{\ell-2}\left\{\left[\sum\limits_{j=k}^{\ell-2}\tilde{\nu}_j+\frac{\tilde{\nu}_{\ell-1}+\tilde{\nu}_\ell}{2}\right]+(\ell-k)\right\}\varepsilon_k+\frac{\tilde{\nu}_{\ell-1}+\tilde{\nu}_{\ell}+2}{2}+\frac{\tilde{\nu}_\ell-\tilde{\nu}_{\ell-1}}{2}\varepsilon_\ell$$
so that
\begin{align*}
&\langle\tilde{\nu}+\eta,\tilde{\nu}+\eta\rangle\\
&=\frac{1}{4}\left\{\sum\limits_{k=1}^{\ell-2}\left(2\sum\limits_{j=k}^{\ell-2}\tilde{\nu}_j+\tilde{\nu}_{\ell-1}+\tilde{\nu}_\ell+2(\ell-k)\right)^2+(\tilde{\nu}_\ell+\tilde{\nu}_{\ell-1}+2)^2+(\tilde{\nu}_\ell-\tilde{\nu}_{\ell-1})^2\right\}
\end{align*}
and
$$\langle\eta,\eta\rangle=\sum\limits_{k=1}^{\ell}(\ell-k)^2=\frac{1}{4}\sum\limits_{k=1}^{\ell}(2(\ell-k))^2.$$
By formula \eqref{eigenvalue:formula:3}  we have
\begin{align*}
\lambda_{\tilde{\nu}}=&\frac{1}{(4\ell-4)\gamma}\left[\langle\tilde{\nu}+\eta,\tilde{\nu}+\eta\rangle-\langle\eta,\eta\rangle\right]\\
=&\frac{1}{(16\ell-16)\gamma}\left(\sum\limits_{k=1}^{\ell-2}\left(2\sum\limits_{j=k}^{\ell-2}\tilde{\nu}_j+\tilde{\nu}_{\ell-1}+\tilde{\nu}_\ell+2(\ell-k)\right)^2+(\tilde{\nu}_\ell+\tilde{\nu}_{\ell-1}+2)^2\right.\\
&\hspace{2cm}+(\tilde{\nu}_\ell-\tilde{\nu}_{\ell-1})^2-\left.\sum\limits_{k=1}^{\ell}\left(2(\ell-k)\right)^2\right).
\end{align*}
Set $\nu_j:=\tilde{\nu}_j+1,\ j=1,...,\ell$, $\nu:=(\nu_1,...,\nu_\ell)\in\mathbb{N}^\ell$, and $\lambda_\nu:=\lambda_{\tilde{\nu}}$, so that
\begin{align*}
\lambda_\nu=\lambda_{\tilde{\nu}}=&\frac{1}{(16\ell-16)\gamma}\left(\sum\limits_{k=1}^{\ell-2}\left(2\sum\limits_{j=k}^{\ell-2}(\nu_j-1)+(\nu_{\ell-1}-1)+(\nu_\ell-1)+2(\ell-k)\right)^2\right.\\
&\hspace{2cm}+(\nu_\ell-1+\nu_{\ell-1}-1+2)^2+(\nu_\ell-\nu_{\ell-1})^2-\left.\sum\limits_{k=1}^{\ell}\left(2(\ell-k)\right)^2\right)\\
=&\frac{1}{(16\ell-16)\gamma}\left(\sum\limits_{k=1}^{\ell-2}\left(2\sum\limits_{j=k}^{\ell-2}\nu_j-2\sum\limits_{j=k}^{\ell-2}1+\nu_{\ell-1}+\nu_\ell-2+2(\ell-k)\right)^2\right.\\
&\hspace{2cm}+(\nu_\ell+\nu_{\ell-1})^2+(\nu_\ell-\nu_{\ell-1})^2-\left.\sum\limits_{k=1}^{\ell}\left(2(\ell-k)\right)^2\right)\\
=&\frac{1}{(16\ell-16)\gamma}\left(\sum\limits_{k=1}^{\ell-2}\left(2\sum\limits_{j=k}^{\ell-2}\nu_j+\nu_{\ell-1}+\nu_\ell-2(\ell-k-1)-2+2(\ell-k)\right)^2\right.\\
&\hspace{2cm}+(\nu_\ell+\nu_{\ell-1})^2+(\nu_\ell-\nu_{\ell-1})^2-\left.\sum\limits_{k=1}^{\ell}\left(2(\ell-k)\right)^2\right)\\
=&\frac{1}{(16\ell-16)\gamma}\left(\sum\limits_{k=1}^{\ell-2}\left(2\sum\limits_{j=k}^{\ell-2}\nu_j+\nu_{\ell-1}+\nu_\ell\right)^2+(\nu_\ell+\nu_{\ell-1})^2+(\nu_\ell-\nu_{\ell-1})^2\right.\\
&\hspace{2cm}-\left.\sum\limits_{k=1}^{\ell}\left(2(\ell-k)\right)^2\right),
\end{align*}completing the proof.
\end{proof}
\noindent $\bullet$ {\it Diagram} $\textnormal{E}_8.$ Now, we consider the exceptional case of the compact Lie group $\textnormal{E}_8.$
\begin{theorem}\label{spectrum:theorem:E8}
    The spectrum of the Laplacian operator on $\textnormal{E}_8$ is given by $$\textnormal{Spec(E}_8\textnormal{)}=\{\lambda_\nu:\nu=(\nu_1,\nu_2,\nu_3,\nu_4,\nu_5,\nu_6,\nu_7,\nu_8)\in\mathbb{N}^8\}$$ where
    \begin{align}
        \lambda_\nu=&\frac{1}{240\gamma}\left[\left(\nu_2-\nu_3\right)^2+\left(\nu_2+\nu_3\right)^2+\left(\nu_2+\nu_3+2\nu_4\right)^2+\left(\nu_2+\nu_3+2\nu_4+2\nu_5\right)^2\right.\nonumber\\
    &\left.\hspace{1cm}+\left(\nu_2+\nu_3+2\nu_4+2\nu_5+2\nu_6\right)^2+\left(\nu_2+\nu_3+2\nu_4+2\nu_5+2\nu_6+2\nu_7\right)^2\right.\nonumber\\
    &\left.\hspace{1cm}+\left(\nu_2+\nu_3+2\nu_4+2\nu_5+2\nu_6+2\nu_7+2\nu_8\right)^2\right.\nonumber\\
    &\left.\hspace{1cm}+\left(4\nu_1+5\nu_2+7\nu_3+10\nu_4+8\nu_5+6\nu_6+4\nu_7+2\nu_8\right)^2-2480\right].
    \end{align}
\end{theorem}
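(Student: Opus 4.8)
The plan is to argue exactly as in Theorems~\ref{spectrum:theorem:Al}--\ref{spectrum:theorem:Dl}: realise the root system $E_8$ concretely, read off $\eta$, the highest root and the fundamental weights, apply the eigenvalue formula \eqref{eigenvalue:formula:3}, and then perform the affine shift $\nu_j=\tilde\nu_j+1$ to bring the eigenvalue to the stated form.

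Following \cite[Chapter~12]{Humphreys}, I would take $\mathbb{R}^8$ with a $\langle\cdot,\cdot\rangle$-orthonormal basis $\{\varepsilon_1,\dots,\varepsilon_8\}$ and the standard realisation of $E_8$, whose $120$ positive roots are the $56$ vectors $\varepsilon_i\pm\varepsilon_j$ $(1\le i<j\le 8)$ together with the $64$ half-integral positive roots of the form $\tfrac12(\pm\varepsilon_1\pm\cdots\pm\varepsilon_8)$ having an even number of minus signs. Fixing a base $\Sigma=\{\alpha_1,\dots,\alpha_8\}$ of simple roots, one obtains the fundamental weights $\overline\omega_1,\dots,\overline\omega_8$ by inverting the $E_8$ Cartan matrix; recording their $\varepsilon_k$-coordinates is the only point where $E_8$ demands more bookkeeping than the classical families, since several $\overline\omega_j$ have half-integer entries. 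Then $\eta=\tfrac12\sum_{\alpha\in\Pi^+}\alpha=\sum_{j=1}^8\overline\omega_j$ satisfies $\langle\eta,\eta\rangle=620$ (equivalently $\tfrac{1}{12}h^\vee\dim E_8=\tfrac{1}{12}\cdot 30\cdot 248$), and since $E_8$ is simply laced its highest long root $\tilde\alpha$ has $\langle\tilde\alpha,\tilde\alpha\rangle=2$, so
\begin{equation*}
b=\langle\tilde\alpha,\tilde\alpha\rangle+2\langle\tilde\alpha,\eta\rangle=2+2(h^\vee-1)=60,
\end{equation*}
as one may also check by the one-line inner-product computation used in the previous proofs.

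Given a highest weight $\tilde\nu=\sum_{j=1}^8\tilde\nu_j\overline\omega_j$ with $\tilde\nu_j\in\mathbb{N}_0$, I would expand $\tilde\nu=\sum_{k=1}^8 c_k\varepsilon_k$ with each $c_k$ an explicit linear form in $\tilde\nu_1,\dots,\tilde\nu_8$, add $\eta$, and invoke \eqref{eigenvalue:formula:3} (writing $\eta_k$ for the $k$-th coordinate of $\eta$) to get
\begin{align*}
\lambda_{\tilde\nu}&=\frac{1}{60\gamma}\Big[\langle\tilde\nu+\eta,\tilde\nu+\eta\rangle-\langle\eta,\eta\rangle\Big]
=\frac{1}{60\gamma}\Big[\sum_{k=1}^8(c_k+\eta_k)^2-620\Big].
\end{align*}
Since the half-integer entries force each $(c_k+\eta_k)^2$ to equal $\tfrac14$ times the square of an \emph{integer} linear form in $\nu$, a global factor $\tfrac14$ comes out, converting $\tfrac1{60\gamma}$ into $\tfrac1{240\gamma}$ and $620$ into $2480$; this is the source of the constants appearing in the statement. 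Finally, substituting $\nu_j=\tilde\nu_j+1$ exactly as in the $A_\ell$ and $B_\ell$ proofs, for each coordinate $k$ the constant contributed by $\eta_k$ is cancelled precisely by the constant produced in shifting the coefficients of $c_k$, leaving the eight linear forms $\nu_2-\nu_3,\ \nu_2+\nu_3,\ \nu_2+\nu_3+2\nu_4,\ \dots,\ 4\nu_1+5\nu_2+7\nu_3+10\nu_4+8\nu_5+6\nu_6+4\nu_7+2\nu_8$ displayed in the theorem. As $\tilde\nu_j\ge 0$ is equivalent to $\nu_j\ge 1$, the nonzero eigenvalues are parametrised by $\nu\in\mathbb{N}^8$, which gives the asserted formula.

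There is no conceptual difficulty here; the strategy is identical to that of the classical diagrams, and the work is entirely computational: inverting the $E_8$ Cartan matrix to obtain the fundamental weights in $\varepsilon$-coordinates, keeping track of the half-integer entries, and carrying out the eight coordinate-by-coordinate cancellations after the shift. I expect this last verification to be the most error-prone step, since $E_8$ carries the largest and least symmetric Cartan matrix among the simple Lie algebras; once it is completed, the result follows exactly as outlined above.
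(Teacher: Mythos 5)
Your proposal follows exactly the paper's route: realise $E_8$ in $\mathbb{R}^8$ with the standard $\{\varepsilon_k\}$-basis, compute $\eta$, $b$ and the fundamental weights in $\varepsilon$-coordinates, apply the eigenvalue formula \eqref{eigenvalue:formula:3}, and shift $\nu_j=\tilde\nu_j+1$; your checkpoints $b=60$, $\langle\eta,\eta\rangle=620$, and the global factor $\tfrac14$ turning $60\gamma$ into $240\gamma$ and $620$ into $2480$ all agree with the paper's computation. Your use of the Freudenthal--de Vries strange formula and the dual Coxeter number ($\langle\tilde\alpha,\eta\rangle=h^\vee-1=29$) to pin down $b$ and $\langle\eta,\eta\rangle$ without listing all $120$ positive roots is a tidy shortcut the paper does not use, but note that your proof is only a roadmap: you state that you ``would expand'' $\tilde\nu$ and that the coordinate-by-coordinate cancellation ``is expected'' to go through, whereas the paper actually writes out all eight fundamental weights $\overline\omega_1,\dots,\overline\omega_8$ in $\varepsilon$-coordinates and carries the cancellation to completion, which is the substantive content of the theorem.
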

\begin{proof} We can construct $\textnormal{E}_8$ on a 8-dimensional linear space with an inner product $\langle\cdot,\cdot\rangle$ and an orthonormal basis $\{\varepsilon_1,\cdots ,\varepsilon_8\}.$ The positive roots are given by
\begin{align*}
&\varepsilon_j\pm\varepsilon_i,\ 1\leq i< j\leq 8,\ \textnormal{and}\\
&\frac{1}{2}\left((-1)^{i_1}\varepsilon_1+\cdots+(-1)^{i_7}\varepsilon_7+\varepsilon_8\right),\ i_k\in\{0,1\},\ i_1+\cdots+i_7\equiv 0\ \textnormal{mod}\ 2
\end{align*}
and the simple roots are
\begin{align*}
    &\alpha_1=\frac{1}{2}\left(\varepsilon_1-\varepsilon_2-\varepsilon_3-\varepsilon_4-\varepsilon_5-\varepsilon_6-\varepsilon_7+\varepsilon_8\right),\\
    &\alpha_2=\varepsilon_1+\varepsilon_2\\
    &\alpha_j=\varepsilon_{j-1}-\varepsilon_{j-2},\ j=3,4,5,6,7,8.
\end{align*}
The corresponding fundamental weights are given by
\begin{align*}
    \overline{\omega}_1=&4\alpha_1+5\alpha_2+7\alpha_3+10\alpha_4+8\alpha_5+6\alpha_6+4\alpha_7+2\alpha_8=2\varepsilon_8\\
    \overline{\omega}_2=&5\alpha_1+8\alpha_2+10\alpha_3+15\alpha_4+12\alpha_5+9\alpha_6+6\alpha_7+3\alpha_8\\
    =&\frac{\varepsilon_1+\varepsilon_2+\varepsilon_3+\varepsilon_4+\varepsilon_5+\varepsilon_6+\varepsilon_7+5\varepsilon_8}{2}\\
    \overline{\omega}_3=&7\alpha_1+10\alpha_2+14\alpha_3+20\alpha_4+16\alpha_5+12\alpha_6+8\alpha_7+4\alpha_8\\
    =&\frac{-\varepsilon_1+\varepsilon_2+\varepsilon_3+\varepsilon_4+\varepsilon_5+\varepsilon_6+\varepsilon_7+7\varepsilon_8}{2}\\
    \overline{\omega}_4=&10\alpha_1+15\alpha_2+20\alpha_3+30\alpha_4+24\alpha_5+18\alpha_6+12\alpha_7+6\alpha_8\\
    =&\varepsilon_3+\varepsilon_4+\varepsilon_5+\varepsilon_6+\varepsilon_7+5\varepsilon_8\\
    \overline{\omega}_5=&8\alpha_1+12\alpha_2+16\alpha_3+24\alpha_4+20\alpha_5+15\alpha_6+10\alpha_7+5\alpha_8\\
    =&\varepsilon_4+\varepsilon_5+\varepsilon_6+\varepsilon_7+4\varepsilon_8\\
    \overline{\omega}_6=&6\alpha_1+9\alpha_2+12\alpha_3+18\alpha_4+15\alpha_5+12\alpha_6+8\alpha_7+4\alpha_8\\
    =&\varepsilon_5+\varepsilon_6+\varepsilon_7+3\varepsilon_8\\
    \overline{\omega}_7=&4\alpha_1+6\alpha_2+8\alpha_3+12\alpha_4+10\alpha_5+8\alpha_6+6\alpha_7+3\alpha_8\\
    =&\varepsilon_6+\varepsilon_7+2\varepsilon_8\\
    \overline{\omega}_8=&2\alpha_1+3\alpha_2+4\alpha_3+6\alpha_4+5\alpha_5+4\alpha_6+3\alpha_7+2\alpha_8\\
    =&\varepsilon_7+\varepsilon_8,
\end{align*}
the highest long root is $\tilde{\alpha}=\varepsilon_7+\varepsilon_8$ and
\begin{align*}
\eta=&\frac{1}{2}\sum\limits_{\alpha\in\Pi^+}\alpha\\
=&\frac{1}{2}\left(\sum\limits_{1\leq i<j\leq 8}(\varepsilon_j+\varepsilon_i)+\sum\limits_{1\leq i<j\leq 8}(\varepsilon_j-\varepsilon_i)+\frac{1}{2}\left[\sum\limits_{\begin{subarray}{c}
i_1,...,i_7=0\\
i_1+\cdots+i_7\equiv0\ \textnormal{mod}\ 2
\end{subarray}}^1\left(\sum\limits_{k=1}^7(-1)^{i_k}\varepsilon_k+\varepsilon_8\right)\right]\right)\\
=&\frac{1}{2}\left(\sum\limits_{1\leq i<j\leq 8}2\varepsilon_j+32\varepsilon_8\right)\\
=&\sum\limits_{k=2}^7(k-1)\varepsilon_k+23\varepsilon_8,
\end{align*}
so
\begin{align*}
    b=\langle\tilde{\alpha},\tilde{\alpha}\rangle+2\langle\tilde{\alpha},\eta\rangle=2+2(6+23)=60.
\end{align*}
Let $\tilde{\nu}=\sum\limits_{k=1}^8\tilde{\nu}_j\overline{\omega}_j$ be a highest weight, then
\begin{align*}
    \tilde{\nu}+\eta=&\left(\frac{\tilde{\nu}_2-\tilde{\nu}_3}{2}\right)\varepsilon_1+\left(\frac{\tilde{\nu}_2+\tilde{\nu}_3+2}{2}\right)\varepsilon_2+\left(\frac{\tilde{\nu}_2+\tilde{\nu}_3+2\tilde{\nu}_4+4}{2}\right)\varepsilon_3\\
    &+\left(\frac{\tilde{\nu}_2+\tilde{\nu}_3+2\tilde{\nu}_4+2\tilde{\nu}_5+6}{2}\right)\varepsilon_4+\left(\frac{\tilde{\nu}_2+\tilde{\nu}_3+2\tilde{\nu}_4+2\tilde{\nu}_5+2\tilde{\nu}_6+8}{2}\right)\varepsilon_5\\
    &+\left(\frac{\tilde{\nu}_2+\tilde{\nu}_3+2\tilde{\nu}_4+2\tilde{\nu}_5+2\tilde{\nu}_6+2\tilde{\nu}_7+10}{2}\right)\varepsilon_6\\
    &+\left(\frac{\tilde{\nu}_2+\tilde{\nu}_3+2\tilde{\nu}_4+2\tilde{\nu}_5+2\tilde{\nu}_6+2\tilde{\nu}_7+2\tilde{\nu}_8+12}{2}\right)\varepsilon_7\\
    &+\left(\frac{4\tilde{\nu}_1+5\tilde{\nu}_2+7\tilde{\nu}_3+10\tilde{\nu}_4+8\tilde{\nu}_5+6\tilde{\nu}_6+4\tilde{\nu}_7+2\tilde{\nu}_8+46}{2}\right)\varepsilon_8\\
    =&\left(\frac{\nu_2-\nu_3}{2}\right)\varepsilon_1+\left(\frac{\nu_2+\nu_3}{2}\right)\varepsilon_2+\left(\frac{\nu_2+\nu_3+2\nu_4}{2}\right)\varepsilon_3\\
    &+\left(\frac{\nu_2+\nu_3+2\nu_4+2\nu_5}{2}\right)\varepsilon_4+\left(\frac{\nu_2+\nu_3+2\nu_4+2\nu_5+2\nu_6}{2}\right)\varepsilon_5\\
    &+\left(\frac{\nu_2+\nu_3+2\nu_4+2\nu_5+2\nu_6+2\nu_7}{2}\right)\varepsilon_6\\
    &+\left(\frac{\nu_2+\nu_3+2\nu_4+2\nu_5+2\nu_6+2\nu_7+2\nu_8}{2}\right)\varepsilon_7\\
    &+\left(\frac{4\nu_1+5\nu_2+7\nu_3+10\nu_4+8\nu_5+6\nu_6+4\nu_7+2\nu_8}{2}\right)\varepsilon_8,\\
\end{align*}
with $\nu_j=\tilde{\nu}_j+1,\ j=1,2,3,4,5,6,7,8.$ By setting $\nu:=(\nu_1,...,\nu_8)$ and $\lambda_\nu:=\lambda_{\tilde{\nu}}$ we have that
\begin{align*}
    \lambda_{\nu}=&\frac{1}{b\gamma}\left[\langle\tilde{\nu}+\eta,\tilde{\nu}+
    \eta\rangle-\langle\eta,\eta\rangle\right]\\
    =&\frac{1}{60\gamma}\left[\left(\frac{\nu_2-\nu_3}{2}\right)^2+\left(\frac{\nu_2+\nu_3}{2}\right)^2+\left(\frac{\nu_2+\nu_3+2\nu_4}{2}\right)^2+\left(\frac{\nu_2+\nu_3+2\nu_4+2\nu_5}{2}\right)^2\right.\\
    &\left.\hspace{1cm}+\left(\frac{\nu_2+\nu_3+2\nu_4+2\nu_5+2\nu_6}{2}\right)^2+\left(\frac{\nu_2+\nu_3+2\nu_4+2\nu_5+2\nu_6+2\nu_7}{2}\right)^2\right.\\
    &\left.\hspace{1cm}+\left(\frac{\nu_2+\nu_3+2\nu_4+2\nu_5+2\nu_6+2\nu_7+2\nu_8}{2}\right)^2\right.\\
    &\left.\hspace{1cm}+\left(\frac{4\nu_1+5\nu_2+7\nu_3+10\nu_4+8\nu_5+6\nu_6+4\nu_7+2\nu_8}{2}\right)^2\right.\\
    &\left.\hspace{1cm}-\left(\sum\limits_{k=2}^7(k-1)^2+23^2\right)\right]\\
    =&\frac{1}{240\gamma}\left[\left(\nu_2-\nu_3\right)^2+\left(\nu_2+\nu_3\right)^2+\left(\nu_2+\nu_3+2\nu_4\right)^2+\left(\nu_2+\nu_3+2\nu_4+2\nu_5\right)^2\right.\\
    &\left.\hspace{1cm}+\left(\nu_2+\nu_3+2\nu_4+2\nu_5+2\nu_6\right)^2+\left(\nu_2+\nu_3+2\nu_4+2\nu_5+2\nu_6+2\nu_7\right)^2\right.\\
    &\left.\hspace{1cm}+\left(\nu_2+\nu_3+2\nu_4+2\nu_5+2\nu_6+2\nu_7+2\nu_8\right)^2\right.\\
    &\left.\hspace{1cm}+\left(4\nu_1+5\nu_2+7\nu_3+10\nu_4+8\nu_5+6\nu_6+4\nu_7+2\nu_8\right)^2-2480\right],
\end{align*}
as we wanted to prove.
\end{proof}
\noindent $\bullet$ {\it Diagram} $\textnormal{F}_4.$ Now, we consider the exceptional case of the compact Lie group $\textnormal{F}_4.$
\begin{theorem}\label{spectrum:theorem:F4}
    The spectrum of the Laplacian operator on $\textnormal{F}_4$ is given by $$\textnormal{Spec(F}_4\textnormal{)}=\{\lambda_\nu:\nu=(\nu_1,\nu_2,\nu_3,\nu_4)\in\mathbb{N}^4\},$$ where
    \begin{equation}
        \lambda_\nu=\frac{1}{72\gamma}\left[(2\nu_1+4\nu_2+3\nu_3+2\nu_4)^2+(2\nu_1+2\nu_2+\nu_3)^2+(2\nu_2+\nu_3)^2+\nu_4^2-156\right].
    \end{equation}
\end{theorem}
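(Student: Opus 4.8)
The plan is to repeat, for the exceptional group $\mathrm{F}_4$, the computation already carried out above for the series $A_\ell$--$D_\ell$ and for $\mathrm{E}_8$: realise the root system in a Euclidean space, list $\Pi^+$, $\Sigma$ and the fundamental weights, evaluate $\eta$, the highest long root $\tilde\alpha$ and the constant $b$, and substitute everything into formula \eqref{eigenvalue:formula:3}. Following \cite[Chapter~12]{Humphreys}, I would realise $\mathrm{F}_4$ inside a $4$-dimensional Euclidean space with a $\langle\cdot,\cdot\rangle$-orthonormal basis $\{\varepsilon_1,\varepsilon_2,\varepsilon_3,\varepsilon_4\}$; its $24$ positive roots are the $12$ long roots $\varepsilon_i\pm\varepsilon_j$ ($1\le i<j\le 4$), the $4$ short roots $\varepsilon_i$ ($1\le i\le 4$), and the $8$ short roots $\tfrac12(\varepsilon_1\pm\varepsilon_2\pm\varepsilon_3\pm\varepsilon_4)$. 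I would then fix a base $\Sigma=\{\alpha_1,\alpha_2,\alpha_3,\alpha_4\}$ (two long roots and two short ones, the spinorial root being one of the short simple roots) and recover the fundamental weights by solving the linear system $(\overline\omega_i,\alpha_j)=\delta_{ij}(\alpha_j,\alpha_j)/2$; up to reordering they are $\varepsilon_1+\varepsilon_2$, $2\varepsilon_1+\varepsilon_2+\varepsilon_3$, $\varepsilon_1$ and $\tfrac12(3\varepsilon_1+\varepsilon_2+\varepsilon_3+\varepsilon_4)$.

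Next I would evaluate the two scalars entering \eqref{eigenvalue:formula:3}. Summing the positive roots, the short roots $\varepsilon_i$ contribute $\varepsilon_1+\varepsilon_2+\varepsilon_3+\varepsilon_4$, the eight spinorial roots contribute $4\varepsilon_1$, and $\sum_{1\le i<j\le 4}\bigl[(\varepsilon_i+\varepsilon_j)+(\varepsilon_i-\varepsilon_j)\bigr]=2\sum_{i<j}\varepsilon_i=6\varepsilon_1+4\varepsilon_2+2\varepsilon_3$, so that $2\eta$ has coordinates $(11,5,3,1)$ in the chosen basis and hence $\langle\eta,\eta\rangle=\tfrac14(11^2+5^2+3^2+1^2)=39$. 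With the highest long root $\tilde\alpha=\varepsilon_1+\varepsilon_2$ one finds $b=\langle\tilde\alpha,\tilde\alpha\rangle+2\langle\tilde\alpha,\eta\rangle=2+16=18$. Since $\eta$ and one of the fundamental weights carry half-integer coordinates, writing $\tilde\nu+\eta=\tfrac12\sum_{k=1}^4 c_k\varepsilon_k$ with $c_k\in\mathbb{Z}$ turns \eqref{eigenvalue:formula:3} into $\lambda_{\tilde\nu}=\tfrac{1}{4b\gamma}\bigl(\sum_k c_k^2-4\langle\eta,\eta\rangle\bigr)=\tfrac{1}{72\gamma}\bigl(\sum_k c_k^2-156\bigr)$, which already pins down the normalising factor $\tfrac{1}{72\gamma}$ and the additive constant $156$ of the statement.

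It then remains to expand a general highest weight $\tilde\nu=\sum_{j=1}^4\tilde\nu_j\overline\omega_j$ (with $\tilde\nu_j\in\mathbb{N}_0$) in the basis $\{\varepsilon_k\}$, add $\eta$, read off the integers $c_k=c_k(\tilde\nu)$, and carry out the substitution $\nu_j=\tilde\nu_j+1$. As in all the earlier cases, the key point is that each coordinate of $2\eta$ equals the sum of the coefficients occurring in the corresponding form $c_k$, so that after the shift the spurious linear and constant terms cancel and the $c_k$ become precisely the four linear forms $2\nu_1+4\nu_2+3\nu_3+2\nu_4$, $2\nu_1+2\nu_2+\nu_3$, $2\nu_2+\nu_3$ and $\nu_4$ appearing in the statement; since $\tilde\nu_j\ge 0$ is equivalent to $\nu_j\ge 1$, the parameter $\nu$ ranges over $\mathbb{N}^4$. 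Finally, Proposition~\ref{eigenvalue:theorem} together with the theorem of the highest weight (Theorem~\ref{Cartan:theorem}) and the fact that $\mathrm{F}_4$ is simply connected (so that no passage to a quotient as in Remark~\ref{nonsimply:connected} is needed) guarantees that, conversely, every nonzero eigenvalue of the Laplacian is obtained in this way, which yields the claimed description of $\mathrm{Spec}(\mathrm{F}_4)$.

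The only real difficulty here is organisational rather than conceptual: unlike the classical families $A_\ell$--$D_\ell$ there is no recursion in the rank to exploit, and the half-integer coordinates carried by the spinorial positive roots and by one of the fundamental weights make the cancellations triggered by $\nu_j=\tilde\nu_j+1$ the most error-prone step, so I would treat the four coordinates $c_k$ one at a time and verify each cancellation explicitly, exactly as was done above for $\mathrm{E}_8$.
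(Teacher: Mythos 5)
Your plan is exactly the paper's proof: realise $F_4$ in $\mathbb{R}^4$, list $\Pi^+$, $\Sigma$, the $\overline\omega_j$, compute $\eta=\tfrac12(11\varepsilon_1+5\varepsilon_2+3\varepsilon_3+\varepsilon_4)$, $\tilde\alpha=\varepsilon_1+\varepsilon_2$, $b=18$, $\langle\eta,\eta\rangle=39$, and then shift $\nu_j=\tilde\nu_j+1$ in formula \eqref{eigenvalue:formula:3}; all these intermediate values you give agree with the paper's.

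One caution, though: the order in which you list the fundamental weights is not the one underlying the displayed formula, and you should not expect, as you assert, that the linear forms $c_k$ you obtain will be ``precisely'' the ones printed in the statement. You put $\varepsilon_1$ third and the spinorial weight $\tfrac12(3\varepsilon_1+\varepsilon_2+\varepsilon_3+\varepsilon_4)$ fourth; with that labelling the shifted coordinates come out as $2\nu_1+4\nu_2+2\nu_3+3\nu_4$, $2\nu_1+2\nu_2+\nu_4$, $2\nu_2+\nu_4$, $\nu_4$, i.e. the printed forms with $\nu_3\leftrightarrow\nu_4$ swapped in the first three squares. The paper uses the opposite convention ($\overline\omega_3$ dual to the short simple root $\alpha_3=\varepsilon_4$, so $\overline\omega_3=\tfrac12(3\varepsilon_1+\varepsilon_2+\varepsilon_3+\varepsilon_4)$ and $\overline\omega_4=\varepsilon_1$). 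Under the paper's convention the $\varepsilon_4$-coordinate of $\tilde\nu+\eta$ is $\tfrac{\tilde\nu_3+1}{2}=\tfrac{\nu_3}{2}$, so the last square should actually read $\nu_3^2$, not $\nu_4^2$ as printed; this is a typo in the statement (and in the paper's proof), which you will hit head-on when you do the bookkeeping. A quick sanity check confirms it: taking $\tilde\nu=\overline\omega_4=\varepsilon_1$, i.e. $\nu=(1,1,1,2)$ in the paper's labelling, one gets $\lambda=\tfrac{1}{18\gamma}(51-39)=\tfrac{2}{3\gamma}$, which the displayed formula reproduces only if the last term is $\nu_3^2=1$, not $\nu_4^2=4$. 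None of this affects the set $\mathrm{Spec}(F_4)$ (it is invariant under the relabelling), but be aware of it when matching your computation against the printed formula.
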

\begin{proof} Let $\{\varepsilon_1,\varepsilon_2,\varepsilon_3,\varepsilon_4\}$ an orthonormal basis of a 4-dimensional vector space endowed with an inner product $\langle\cdot,\cdot\rangle.$ The simple roots are $\alpha_1=\varepsilon_2-\varepsilon_3,\ \alpha_2=\varepsilon_3-\varepsilon_4,\ \alpha_3=\varepsilon_4,$ and $\alpha_4=\frac{1}{2}(\varepsilon_1-\varepsilon_2-\varepsilon_3-\varepsilon_4)$ and the positive roots are $\varepsilon_i,\ 1\leq i\leq 4,\ \varepsilon_i\pm\varepsilon_j,\ 1\leq i<j\leq 4,$ and $\frac{1}{2}(\varepsilon_1\pm\varepsilon_2\pm\varepsilon_3\pm\varepsilon_4).$ The fundamental weights with respect to $\Sigma=\{\alpha_1,\alpha_2,\alpha_3,\alpha_4\}$ are
$$\overline{\omega}_1=\varepsilon_1+\varepsilon_2,\ \overline{\omega}_2=2\varepsilon_1+\varepsilon_2+\varepsilon_3,\ \overline{\omega}_3=\frac{3\varepsilon_1+\varepsilon_2+\varepsilon_3+\varepsilon_4}{2},\ \overline{\omega}_4=\varepsilon_1,$$ the highest long root is $$\tilde{\alpha}=2\alpha_1+3\alpha_2+4\alpha_3+2\alpha_4=\varepsilon_1+\varepsilon_2,$$ and
\begin{align*}\eta=&\frac{1}{2}\sum\limits_{\alpha\in\Pi^+}\alpha\\
=&\frac{1}{2}\left(\sum\limits_{i=1}^4\varepsilon_i+\sum\limits_{1\leq i<j\leq 4}(\varepsilon_i+\varepsilon_j)+\sum\limits_{1\leq i<j\leq 4}(\varepsilon_i-\varepsilon_j)\right.\\
&\left.\hspace{0.5cm}+\frac{1}{2}\sum\limits_{i,j,k=0}^1(\varepsilon_1+(-1)^i\varepsilon_2+(-1)^j\varepsilon_3+(-1)^k\varepsilon_4\right)\\
=&\frac{11}{2}\varepsilon_1+\frac{5}{2}\varepsilon_2+\frac{3}{2}\varepsilon_3+\frac{1}{2}\varepsilon_4,\\
\Longrightarrow b=&\langle\tilde{\alpha},\tilde{\alpha}\rangle+2\langle\tilde{\alpha},\eta\rangle= 2+2\left(\frac{11}{2}+\frac{5}{2}\right)=18.\\
\end{align*}
For a highest weight \begin{align*}\tilde{\nu}=&\ \tilde{\nu}_1\overline{\omega}_1+\tilde{\nu}_2\overline{\omega}_2+\tilde{\nu}_3\overline{\omega}_3+\tilde{\nu}_4\overline{\omega}_4\\
=&\left(\tilde{\nu}_1+2\tilde{\nu}_2+\frac{3\tilde{\nu}_3}{2}+\tilde{\nu}_4\right)\varepsilon_1+\left(\tilde{\nu}_1+\tilde{\nu}_2+\frac{\tilde{\nu}_3}{2}\right)\varepsilon_2+\left(\tilde{\nu}_2+\frac{\tilde{\nu}_3}{2}\right)\varepsilon_3+\frac{\tilde{\nu}_4}{2}\varepsilon_4\\
=&\left(\frac{2\tilde{\nu}_1+4\tilde{\nu}_2+3\tilde{\nu}_3+2\tilde{\nu}_4}{2}\right)\varepsilon_1+\left(\frac{2\tilde{\nu}_1+2\tilde{\nu}_2+\tilde{\nu}_3}{2}\right)\varepsilon_2+\left(\frac{2\tilde{\nu}_2+\tilde{\nu}_3}{2}\right)\varepsilon_3+\frac{\tilde{\nu}_4}{2}\varepsilon_4
\end{align*}
we have that
\begin{align*}
    \tilde{\nu}+\eta=&\left(\frac{2\tilde{\nu}_1+4\tilde{\nu}_2+3\tilde{\nu}_3+2\tilde{\nu}_4+11}{2}\right)\varepsilon_1+\left(\frac{2\tilde{\nu}_1+2\tilde{\nu}_2+\tilde{\nu}_3+5}{2}\right)\varepsilon_2\\
    &+\left(\frac{2\tilde{\nu}_2+\tilde{\nu}_3+3}{2}\right)\varepsilon_3+\left(\frac{\tilde{\nu}_4+1}{2}\right)\varepsilon_4\\
    =&\left(\frac{2(\tilde{\nu}_1+1)+4(\tilde{\nu}_2+1)+3(\tilde{\nu}_3+1)+2(\tilde{\nu}_4+1)}{2}\right)\varepsilon_1\\
    &+\left(\frac{2(\tilde{\nu}_1+1)+2(\tilde{\nu}_2+1)+(\tilde{\nu}_3+1)}{2}\right)\varepsilon_2+\left(\frac{2(\tilde{\nu}_2+1)+(\tilde{\nu}_3+1)}{2}\right)\varepsilon_3\\
    &+\left(\frac{\tilde{\nu}_4+1}{2}\right)\varepsilon_4\\
    =&\left(\frac{2\nu_1+4\nu_2+3\nu_3+2\nu_4}{2}\right)\varepsilon_1+\left(\frac{2\nu_1+2\nu_2+\nu_3}{2}\right)\varepsilon_2+\left(\frac{2\nu_2+\nu_3}{2}\right)\varepsilon_3+\frac{\nu_4}{2}\varepsilon_4
\end{align*}
where $\nu_j=\tilde{\nu}_j+1,\ j=1,2,3,4.$ Hence, $\nu:=(\nu_1,\nu_2,\nu_3,\nu_4)\in\mathbb{N}^4$ and the corresponding eigenvalue $\lambda_{\nu}:=\lambda_{\tilde{\nu}}$ is
\begin{align*}
\lambda_{\nu}=&\frac{1}{b\gamma}\left[\langle\tilde{\nu}+\eta,\tilde{\nu}+\eta\rangle-\langle\eta,\eta\rangle\right]\\
=&\frac{1}{18\gamma}\left[\left(\frac{2\nu_1+4\nu_2+3\nu_3+2\nu_4}{2}\right)^2+\left(\frac{2\nu_1+2\nu_2+\nu_3}{2}\right)^2+\left(\frac{2\nu_2+\nu_3}{2}\right)^2+\left(\frac{\nu_4}{2}\right)^2\right.\\
&\left.\hspace{0.8cm}-\left(\left(\frac{11}{2}\right)^2+\left(\frac{5}{2}\right)^2+\left(\frac{3}{2}\right)^2+\left(\frac{1}{2}\right)^2\right)\right]\\
=&\frac{1}{72\gamma}\left[(2\nu_1+4\nu_2+3\nu_3+2\nu_4)^2+(2\nu_1+2\nu_2+\nu_3)^2+(2\nu_2+\nu_3)^2+\nu_4^2-156\right],
\end{align*}
which completes the proof.
\end{proof}
\noindent $\bullet$ {\it Diagram} $\textnormal{G}_2.$ Now, we consider the exceptional case of the compact Lie group $\textnormal{G}_2.$
\begin{theorem}\label{spectrum:theorem:G2} The spectrum of the Laplacian operator on $\textnormal{G}_2$ is given by
\begin{equation*}
\textnormal{Spec(G}_2\textnormal{)}=\{\lambda_\nu:\nu=(\nu_1,\nu_2)\in\mathbb{N}\times\mathbb{N}\},
\end{equation*}
where
\begin{equation}
    \lambda_\nu=\frac{1}{24\gamma}\left[(\nu_1+\nu_2)^2+(\nu_1+2\nu_2)^2+\nu_2^2-14\right].
\end{equation}
\end{theorem}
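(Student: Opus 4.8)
The plan is to run, for the diagram $\mathrm{G}_2$, the same four-step scheme already used in the proofs of Theorems~\ref{spectrum:theorem:Al}--\ref{spectrum:theorem:F4}: fix a concrete realisation of the root system; compute the Weyl vector $\eta$, the highest long root $\tilde\alpha$, and the constant $b=\langle\tilde\alpha,\tilde\alpha\rangle+2\langle\tilde\alpha,\eta\rangle$ from \eqref{eigenvalue:formula:3}; expand a generic highest weight $\tilde\nu=\tilde\nu_1\overline\omega_1+\tilde\nu_2\overline\omega_2$ in the ambient orthonormal basis; and finally substitute into \eqref{eigenvalue:formula:3} and re-index via $\nu_j=\tilde\nu_j+1$, using Theorem~\ref{Cartan:theorem} and Proposition~\ref{eigenvalue:theorem} to conclude that the resulting $\lambda_\nu$ exhaust $\textnormal{Spec}(\mathcal{L}_{G_2})$. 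Following \cite[Chapter~12]{Humphreys}, I would realise $\mathrm{G}_2$ inside the plane $E=\{x\in\mathbb{R}^3:\ x_1+x_2+x_3=0\}$ spanned by a $\langle\cdot,\cdot\rangle$-orthonormal triple $\varepsilon_1,\varepsilon_2,\varepsilon_3$, with simple roots $\alpha_1=\varepsilon_1-\varepsilon_2$ (short) and $\alpha_2=-2\varepsilon_1+\varepsilon_2+\varepsilon_3$ (long), positive roots $\varepsilon_1-\varepsilon_2,\ \varepsilon_3-\varepsilon_1,\ \varepsilon_3-\varepsilon_2$ (short) together with $-2\varepsilon_1+\varepsilon_2+\varepsilon_3,\ \varepsilon_1-2\varepsilon_2+\varepsilon_3,\ -\varepsilon_1-\varepsilon_2+2\varepsilon_3$ (long), and fundamental weights $\overline\omega_1=\varepsilon_3-\varepsilon_2=2\alpha_1+\alpha_2$, $\overline\omega_2=-\varepsilon_1-\varepsilon_2+2\varepsilon_3=3\alpha_1+2\alpha_2$ (read off from the relations $\langle\overline\omega_i,\alpha_j\rangle=\delta_{ij}\langle\alpha_j,\alpha_j\rangle/2$).

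Summing the six positive roots --- equivalently, using $\eta=\overline\omega_1+\overline\omega_2$ --- one finds $\eta=-\varepsilon_1-2\varepsilon_2+3\varepsilon_3$, so $\langle\eta,\eta\rangle=14$; the highest long root is $\tilde\alpha=3\alpha_1+2\alpha_2=-\varepsilon_1-\varepsilon_2+2\varepsilon_3$, which gives $\langle\tilde\alpha,\tilde\alpha\rangle=6$ and $\langle\tilde\alpha,\eta\rangle=9$, hence $b=6+18=24$. For a highest weight $\tilde\nu=\tilde\nu_1\overline\omega_1+\tilde\nu_2\overline\omega_2=-\tilde\nu_2\varepsilon_1-(\tilde\nu_1+\tilde\nu_2)\varepsilon_2+(\tilde\nu_1+2\tilde\nu_2)\varepsilon_3$, adding $\eta$ and then setting $\nu_j=\tilde\nu_j+1$ makes the shifts cancel and leaves $\tilde\nu+\eta=-\nu_2\varepsilon_1-(\nu_1+\nu_2)\varepsilon_2+(\nu_1+2\nu_2)\varepsilon_3$, whose coordinates still sum to $0$ (a handy sanity check). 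Then $\langle\tilde\nu+\eta,\tilde\nu+\eta\rangle=\nu_2^2+(\nu_1+\nu_2)^2+(\nu_1+2\nu_2)^2$, and \eqref{eigenvalue:formula:3} produces $\lambda_\nu=\frac{1}{24\gamma}\big[(\nu_1+\nu_2)^2+(\nu_1+2\nu_2)^2+\nu_2^2-14\big]$; since $\tilde\nu_j\in\mathbb{N}_0\iff\nu_j\in\mathbb{N}$, this is exactly the claimed description of $\textnormal{Spec}(\mathrm{G}_2)$.

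The one step that genuinely needs care --- and where a sign or bookkeeping slip would most easily occur --- will be the realisation: unlike $B_\ell$, $C_\ell$, $D_\ell$, the $\mathrm{G}_2$ root system does not sit inside a coordinate-orthonormal lattice, so one must work in the two-dimensional hyperplane $\{x_1+x_2+x_3=0\}\subset\mathbb{R}^3$ and carry the redundant third coordinate through every computation, keeping track of the signs in the positive roots, in $\overline\omega_1,\overline\omega_2$ and in $\eta$. This three-dimensional ambient space is also precisely what is responsible for the value $m=\ell+1=3$ when $\tilde{G}\cong\mathrm{G}_2$ in Theorem~\ref{Main:Theorem}. Once the realisation data are pinned down correctly, the rest is the same routine algebra already performed in the $E_8$ and $F_4$ cases.
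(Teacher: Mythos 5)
Your proposal is correct and follows exactly the same route as the paper's proof: same realisation of $\mathrm{G}_2$ in the hyperplane $\{x_1+x_2+x_3=0\}\subset\mathbb{R}^3$, same positive roots, same $\eta$, $\tilde\alpha$ and $b=24$, and the same substitution into \eqref{eigenvalue:formula:3}. One small remark: you state the reindexing as $\nu_j=\tilde\nu_j+1$, which is the correct shift (so that $\tilde\nu_j\in\mathbb{N}_0\iff\nu_j\in\mathbb{N}$); the paper's proof has a slip at the very end, writing ``with $\nu_j:=\tilde{\nu}_j$'' even though its own algebra clearly uses $\nu_j=\tilde\nu_j+1$, so your bookkeeping is actually the cleaner of the two.
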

\begin{proof} The root system $\textnormal{G}_2$ is constructed in a  3-dimensional space by considering the subspace $\{\varepsilon_1+\varepsilon_2+\varepsilon_3\}^{\perp}$ with respect to an inner product $\langle\cdot,\cdot\rangle$ and a  $\langle\cdot,\cdot\rangle$-orthonormal basis $\{\varepsilon_1,\varepsilon_2,\varepsilon_3\}.$  The sets of positive roots and simple positive roots are $$\Pi^+=\{\varepsilon_1-\varepsilon_2,\varepsilon_3-\varepsilon_2,\varepsilon_3-\varepsilon_1,\varepsilon_2+\varepsilon_3-2\varepsilon_1,\varepsilon_1+\varepsilon_3-2\varepsilon_2,2\varepsilon_3-\varepsilon_1-\varepsilon_2\}$$
\textnormal{and} 
$$\Sigma=\{\alpha_1=\varepsilon_1-\varepsilon_2,\alpha_2=\varepsilon_2+\varepsilon_3-2\varepsilon_1\}$$
respectively. The fundamental weights with respect to $\Sigma$ are $$\overline{\omega}_1=2\alpha_1+\alpha_2=\varepsilon_3-\varepsilon_2,\ \overline{\omega}_2=3\alpha_1+2\alpha_2=2\varepsilon_3-\varepsilon_1-\varepsilon_2,$$
the highest long root is
$$\tilde{\alpha}=3\alpha_1+2\alpha_2=2\varepsilon_3-\varepsilon_1-\varepsilon_2$$
and \begin{align*}\eta=&\frac{1}{2}(\varepsilon_1-\varepsilon_2+\varepsilon_3-\varepsilon_2+\varepsilon_3-\varepsilon_1+\varepsilon_2+\varepsilon_3-2\varepsilon_1+\varepsilon_1+\varepsilon_3-2\varepsilon_2+2\varepsilon_3-\varepsilon_1-\varepsilon_2)\\
=&-\varepsilon_1-2\varepsilon_2+3\varepsilon_3
\end{align*} so that
$$b=\langle\tilde{\alpha},\tilde{\alpha}\rangle+2\left\langle\tilde{\alpha},\eta\right\rangle=6+2(9)=24.$$
A highest weight $\tilde{\nu}=\tilde{\nu}_1\overline{\omega}_1+\tilde{\nu}_2\overline{\omega}_2$ is written in the basis $\{\varepsilon_1,\varepsilon_2,\varepsilon_3\}$ as
$$\tilde{\nu}=\tilde{\nu}_1(\varepsilon_3-\varepsilon_2)+\tilde{\nu}_2(2\varepsilon_3-\varepsilon_1-\varepsilon_2)=-\tilde{\nu}_2\varepsilon_1-(\tilde{\nu}_1+\tilde{\nu}_2)\varepsilon_2+(\tilde{\nu}_1+2\tilde{\nu}_2)\varepsilon_3,$$ hence $$\tilde{\nu}+\eta=-(\tilde{\nu}_2+1)\varepsilon_1-(\tilde{\nu}_1+\tilde{\nu}_2+2)\varepsilon_2+(\tilde{\nu}_1+2\tilde{\nu}_2+3)\varepsilon_3.$$
We can now find the eigenvalue $\lambda_{\tilde{\nu}}$ associated to $\tilde{\nu}:$
\begin{align*}
\lambda_{\tilde{\nu}}=&\frac{1}{b\gamma}\left[\langle\tilde{\nu}+\eta,\tilde{\nu}+\eta\rangle-\langle\eta,\eta\rangle\right]\\
=&\frac{1}{24\gamma}\left[(\tilde{\nu}_2+1)^2+(\tilde{\nu}_1+\tilde{\nu}_2+2)^2+(\tilde{\nu}_1+2\tilde{\nu}_2+3)^2-((-1)^2+(-2)^2+3^2)\right]\\
=&\frac{1}{24\gamma}\left[\nu_2^2+(\nu_1+\nu_2)^2+(\nu_1+2\nu_2)^2-14\right],
\end{align*}
with $\nu_j:=\tilde{\nu}_j,\ j=1,2.$ As before, we define $\nu:=(\nu_1,\nu_2)\in\mathbb{N}^2$ and $\lambda_{\nu}:=\lambda_{\tilde{\nu}}$ and obtain the result.
\end{proof}
The following important theorem summarises all the previous results.
\begin{theorem}\label{general:spectrum:theorem} Let $G$ be a connected compact simple Lie group of rank $\ell\geq 2$ and $\tilde{G}$ its universal cover. Then there exist $b,R_0\in\mathbb{N}$  and $\Omega\subseteq \mathbb{N}^\ell$ such that the spectrum of the Laplace operator on $G$ is
\begin{equation}\label{general:spectrum:set}
    \textnormal{Spec($G$)}=\{\lambda_\nu:\nu=(\nu_1,...,\nu_\ell)\in \Omega\},
\end{equation}
where $\lambda_\nu$ is given by the formula
\begin{align}
    \lambda_\nu=\frac{1}{b\gamma}(n_1^2+\cdots+n_m^2-R_0),
\end{align}
with $$m=\left\{\begin{array}{lll}
\ell+1, &&\textnormal{if}\ \tilde{G}\cong\textnormal{SU($\ell+1$)}\ \textnormal{or}\ \tilde{G}\cong G_2,\\
\ell, &&\textnormal{otherwise},
\end{array}\right.$$
and $$[n_1\ \cdots\ n_m]^T=A[\nu_1\ \cdots\ \nu_\ell]^T$$
for some integer $m\times \ell$ matrix $A$ with maximal rank equal to $\ell.$ 
\end{theorem}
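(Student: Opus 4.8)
The plan is to derive Theorem~\ref{general:spectrum:theorem} uniformly from the eigenvalue formula~\eqref{eigenvalue:formula:3} together with the explicit computations already performed in Theorems~\ref{spectrum:theorem:Al}--\ref{spectrum:theorem:G2} (the remaining exceptional types $E_6$, $E_7$ being treated in exactly the same way), and to dispose of the non-simply-connected groups by a short reduction. First I would handle the case where $G$ is not simply connected: then $G\cong\tilde G/\Gamma$ with $\tilde G$ connected, simply connected, compact and simple, the prescribed bi-invariant metric $g$ is induced from one on $\tilde G$, and Remark~\ref{nonsimply:connected} gives $\textnormal{Spec}(G)\subseteq\textnormal{Spec}(\tilde G)$. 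Hence, once the asserted formula is established on $\tilde G$ with index set $\Omega=\mathbb{N}^\ell$, it holds verbatim on $G$ with the \emph{same} $b$, $R_0$, $m$, $A$ and with $\Omega:=\{\nu\in\mathbb{N}^\ell:\lambda_\nu\in\textnormal{Spec}(G)\}$. So it suffices to treat $G=\tilde G$ simply connected.

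For simply connected $G$ I would isolate the structure common to all the computations above. By~\eqref{eigenvalue:formula:3}, the eigenvalues are $\lambda_{\tilde{\nu}}=\frac{1}{b'\gamma}\big[\langle\tilde{\nu}+\eta,\tilde{\nu}+\eta\rangle-\langle\eta,\eta\rangle\big]$, with $b'=\langle\tilde{\alpha},\tilde{\alpha}\rangle+2\langle\tilde{\alpha},\eta\rangle\in\mathbb{N}$ the integer computed explicitly in each case and $\tilde{\nu}=\sum_{j=1}^\ell\tilde{\nu}_j\overline{\omega}_j$, $\tilde{\nu}_j\in\mathbb{N}_0$. Using the identity $\eta=\sum_{j=1}^\ell\overline{\omega}_j$ --- which is precisely what makes the substitution $\nu_j:=\tilde{\nu}_j+1$ cancel all constant terms throughout Theorems~\ref{spectrum:theorem:Al}--\ref{spectrum:theorem:G2} --- one gets $\tilde{\nu}+\eta=\sum_{j=1}^\ell\nu_j\overline{\omega}_j$. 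Writing the fundamental weights in the orthonormal basis $\{\varepsilon_i\}_{i=1}^m$ of the ambient Euclidean space of the root system (so $m=\ell+1$ for $A_\ell$ and $G_2$, $m=\ell$ otherwise) as $\overline{\omega}_j=\sum_{i=1}^m B_{ij}\varepsilon_i$ with $B_{ij}\in\mathbb{Q}$ read off from \cite[Chapter 12]{Humphreys}, one obtains
\[
\langle\tilde{\nu}+\eta,\tilde{\nu}+\eta\rangle=\sum_{i=1}^m\Big(\sum_{j=1}^\ell B_{ij}\nu_j\Big)^2,\qquad \langle\eta,\eta\rangle=\sum_{i=1}^m\Big(\sum_{j=1}^\ell B_{ij}\Big)^2.
\]
Choosing $c\in\mathbb{N}$ to be a common denominator of the $B_{ij}$ and setting $A:=(cB_{ij})\in\mathbb{Z}^{m\times\ell}$, $n_i:=\sum_{j=1}^\ell A_{ij}\nu_j$, $b:=c^2b'$ and $R_0:=\sum_{i=1}^m\big(\sum_{j=1}^\ell A_{ij}\big)^2$, this becomes exactly $\lambda_\nu=\frac{1}{b\gamma}(n_1^2+\cdots+n_m^2-R_0)$ with $[n_1\ \cdots\ n_m]^T=A[\nu_1\ \cdots\ \nu_\ell]^T$ and $b,R_0\in\mathbb{N}$ (and $\nu=(1,\dots,1)$ recovers the eigenvalue $0$, so $\Omega=\mathbb{N}^\ell$ captures the whole spectrum). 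It remains to check that $\textnormal{rank}(A)=\ell$, the maximal value since $m\geq\ell$: this holds because $\textnormal{rank}(A)=\dim\textnormal{span}\{\overline{\omega}_1,\dots,\overline{\omega}_\ell\}$, the fundamental weights being linearly independent (the simple roots $\alpha_1,\dots,\alpha_\ell$ form a basis of $\mathfrak{t}(\mathbb{R})^*$, and the relations $(\overline{\omega}_i,\alpha_j)=\delta_{ij}(\alpha_j,\alpha_j)/2$ together with non-degeneracy of $(\cdot,\cdot)$ force independence), and the $\varepsilon$-realization embeds $\mathfrak{t}(\mathbb{R})^*$ isometrically into $\mathbb{R}^m$, so independence persists there.

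The step I expect to cost the most effort is the middle one. One must verify, for each Dynkin type, that the Weyl-vector identity $\eta=\sum_j\overline{\omega}_j$ together with the shift $\nu_j=\tilde{\nu}_j+1$ really removes \emph{every} constant term, so that the $n_i$ are genuinely homogeneous integer linear forms in $\nu$ --- this is what produces the clean ``sum of $m$ squares minus a constant'' normal form --- and one must control the common denominator $c$, which depends on the type (a factor $\ell+1$ for $A_\ell$, a factor $2$ for the types carrying half-integral fundamental weights such as $B_\ell$, $D_\ell$, $E_7$, $E_8$, and so on), so that $A$ is integral and $R_0\in\mathbb{N}$. This is exactly the bookkeeping already carried out in the individual spectrum theorems, extended by the identical argument to $E_6$ and $E_7$; beyond it there is no real obstacle, the non-simply-connected case adding nothing more than the spectral inclusion of Remark~\ref{nonsimply:connected}.
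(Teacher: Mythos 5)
Your proof is correct, and it takes a genuinely more uniform route than the paper's. The paper's own proof is essentially a reference to Theorems~\ref{spectrum:theorem:Al}--\ref{spectrum:theorem:G2} plus a table listing $b$, $R_0$, and the matrix $A$ case by case; it does not explain \emph{why} the shift $\nu_j=\tilde\nu_j+1$ always produces a homogeneous sum-of-squares normal form. You identify the structural reason: the Weyl-vector identity $\eta=\sum_j\overline\omega_j$, which immediately gives $\tilde\nu+\eta=\sum_j\nu_j\overline\omega_j$, so that $\langle\tilde\nu+\eta,\tilde\nu+\eta\rangle=\sum_i\bigl(\sum_jB_{ij}\nu_j\bigr)^2$ in the orthonormal $\varepsilon$-coordinates, after which a common denominator $c$ of the $B_{ij}$ turns the formula into the claimed integer normal form with $b=c^2b'$ and $R_0=\sum_i\bigl(\sum_jA_{ij}\bigr)^2$. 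The rank argument (linear independence of the $\overline\omega_j$ and the isometric embedding of $\mathfrak{t}(\mathbb{R})^*$ into $\mathbb{R}^m$) and the reduction of the non-simply-connected case via Remark~\ref{nonsimply:connected} match the paper's. One place where your argument actually does strictly more than the paper's proof: the theorem as stated covers \emph{all} simple compact groups of rank $\ge 2$, but the paper's classification and its table in the proof of Theorem~\ref{general:spectrum:theorem} treat only $A_\ell,B_\ell,C_\ell,D_\ell,E_8,F_4,G_2$ and omit $E_6$ and $E_7$ entirely; your uniform derivation handles those two types with no additional work, whereas the paper's proof, as written, leaves a gap there. (Conversely, what the paper's case-by-case approach buys is that it also exhibits the matrices $A$ explicitly, which is used later in the Strichartz estimate to bound $N_R$ via $r_{m,2}$; your argument yields the existence and rank of $A$, which is all that Theorem~\ref{general:spectrum:theorem} asserts.)
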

\begin{proof} By Remark \ref{nonsimply:connected} we may assume that $G$ is simply-connected. As a consequence of Theorems \ref{spectrum:theorem:Al}, \ref{spectrum:theorem:Bl}, \ref{spectrum:theorem:Cl}, \ref{spectrum:theorem:Dl}, \ref{spectrum:theorem:E8}, \ref{spectrum:theorem:F4}, and \ref{spectrum:theorem:G2} we can conclude that the spectrum of the Laplace operator on $G$ has, in fact, the form of \eqref{general:spectrum:set}, where $b$ and $R_0$ are as in the following table
\begin{table}[H]
\centering
    \begin{tabular}{|c|c|c|}\hline
        Type of the Lie algebra &  $b$ & $R_0$\\\hline
        $A_\ell$ & $2(\ell+1)^3$ & $\frac{1}{4}\sum\limits_{k=1}^{\ell+1}(\ell-2(k-1))^2(\ell+1)^2$\\\hline
        $B_\ell$ & $16\ell-8$ & $\sum\limits_{k=1}^{\ell}\left(2(\ell-k)+1\right)^2$\\\hline
        $C_\ell$ & $4(\ell+1)$ & $\frac{\ell(\ell+1)(2\ell+1)}{6}$\\\hline
        $D_\ell$ & $16\ell-16$ & $\sum\limits_{k=1}^{\ell}\left(2(\ell-k)\right)^2$\\\hline
        $\textnormal{E}_8$ & 240 & 2480\\\hline
        $\textnormal{F}_4$ & 72 & 156\\\hline
        $\textnormal{G}_2$ & 24 & 14\\\hline
    \end{tabular}
    \label{table:1}
    \caption{Values of $b$ and $R_0.$}
\end{table}
 and in the formula $[n_1\ \cdots\ n_m]^T=A[\nu_1\ \cdots\ \nu_\ell]^T$ we have that
\begin{align*}
    &A=\begin{bmatrix}
    \ell & \ell-1 & \ell-2 & \ell-3 &\cdots & 2 &   1 \\
    -1 & \ell-1 & \ell-2 & \ell-3  &\cdots & 2 &   1 \\
    -1 & -2 & \ell-2 & \ell-3 &\cdots & 2 &   1 \\
   -1 & -2 & -3 &\ell-3 &\cdots  & 2 &   1 \\
   \cdots  &\cdots  & \cdots  &\cdots  &\cdots  & \cdots  &   \cdots  \\
   -1 & -2 & -3 &-4 &\cdots  & -(\ell-1) &   1 \\
   1 & 2 & 3 &4 &\cdots  & \ell-1 &   \ell \\
\end{bmatrix}_{(\ell+1)\times \ell},\ \textnormal{if $G$ is of type $A_\ell$},\\
    &A=
\begin{bmatrix}
    2 & 2 & \cdots  &   2 & 1 \\
    0 & 2 & \cdots  &   2 & 1  \\
    0 & 0 & \cdots  &   2 & 1  \\
    0 & 0 & \cdots  &   2 & 1  \\
   0 & 0 & \cdots  &   0 & 1  \\
\end{bmatrix}_{\ell\times \ell},\ \textnormal{if $G$ is of type $B_\ell$,}\\
    &A=
\begin{bmatrix}
    1 & 1 & 1&  1 &\cdots & 1 &   1 \\
    0 & 1 & 1 & 1 & \cdots & 1 &   1 \\
   0 & 0 & 1 & 1  & \cdots & 1 &   1 \\
   \cdots  &\cdots  & \cdots  &\cdots  &\cdots  & \cdots  &   \cdots  \\
   0& 0 & 0 & 0  & \cdots  & 1 &   1 \\
   0 & 0 & 0 & 0 &\cdots  & 0 &   1 \\
\end{bmatrix}_{\ell\times \ell},\ \textnormal{if $G$ is of type $C_\ell$,}\\
    &A=
\begin{bmatrix}
    2 & 2 & 2& \cdots  &   2 & 1 &1 \\
    0 & 2 & 2& \cdots  &   2 & 1 &1 \\
   0 & 0 & 2& \cdots  &   2 & 1 &1 \\
   \cdots  & \cdots  & \cdots & \cdots  &  \cdots  & \cdots  &\cdots  \\
   0 & 0 & 0& \cdots  &   2 & 1 &1 \\
   0 & 0 & 0& \cdots  &   0 & 1 &1 \\
   0 & 0 & 0& \cdots  &   0 & -1 & 1 \\
\end{bmatrix}_{\ell\times \ell},\ \textnormal{if $G$ is of type $D_\ell$},\\
    &A=
\begin{bmatrix}
    0 & 1 & -1 & 0 & 0 & 0 & 0 & 0\\
    0 & 1 & 1  & 0 & 0 & 0 & 0 & 0\\
    0 & 1 & 1  & 2 & 0 & 0 & 0 & 0\\
    0 & 1 & 1  & 2 & 2 & 0 & 0 & 0\\
    0 & 1 & 1  & 2 & 2 & 2 & 0 & 0\\
    0 & 1 & 1  & 2 & 2 & 2 & 2 & 0\\
    0 & 1 & 1  & 2 & 2 & 2 & 2 & 2\\
    4 & 5 & 7  & 10 & 8 & 6 & 4 & 2\\
\end{bmatrix},\ \textnormal{if $G\cong E_8$},\\
    &A=
\begin{bmatrix}
    2 & 4 & 3 &   2 \\
    2 & 2 & 1 &   0 \\
    0 & 2 & 1 & 0\\
   0 & 0 & 1&0\\
\end{bmatrix},\ \textnormal{if $G\cong$F}_4,\ \textnormal{and}\ A=
\begin{bmatrix}
    1& 1 \\
    1 & 2 \\
    0 & 1 \\
\end{bmatrix},\ \textnormal{if $G\cong G_2.$}
\end{align*}
    It is easy to see that all these matrices have all their columns linearly independent which completes the proof.
\end{proof}
\subsection{Tools from analytic number theory}
In this section we record some results from the analytic number theory applied in this manuscript related to the Waring problem.  For our further analysis, we are interested in the problem of analysing the number of solutions in positive integers $n_1,n_2,\cdots , n_{s}$ of the equation
\begin{equation}
    n_1^k+\cdots n_s^k=R,\quad R\in \mathbb{N},
\end{equation} which is  denoted by $r_{s,k}(R).$  This problem has a long history. The case $k=2,$ that is counting representations of a number by sums of $s$-squares is well understood  in view of  the circle method due to Hardy, Littlewood and Ramanujan \cite{Hardy1920,HardyLittlewood1920,HardyLittlewood1924,HardyLitlewood1925,HardyRamanujan1917:,HardyRamanujan1917,HardyRamanujan1918,HardyRamanujan1919,Ramanujan1917,Ramanujan1918}, with other developements e.g. by  Bateman \cite{Bateman1951} and Kloosterman \cite{Kloosterman1926}.  The case $k\geq 3$ is extremely exciting.  With earlier fundamental contributions from Hardy and Littlewood and further developments by Vinogradov \cite{Vinogradov1954} the problem  of estimating $r_{s,k}(N)$ when $k\geq 3,$ is related to both fields, analytic number theory and harmonic analysis. It is related to the Vinogradov mean value theorem proved in 2015 by Bourgain, Demeter and Guth \cite{BourgainDemeterGuth2016}, using the decoupling theorem of Bourgain and Demeter \cite{BourgainDemeter2015}. The work \cite{BourgainDemeter2015} and further developments have shown deep connections between analytic number theory and other fundamental problems in Fourier analysis, like the Stein restriction conjecture and the Kakeya problem.   We refer to Pierce \cite{Pierce2019} for a complete historical perspective on the subject.

By the structure of the spectrum of the Laplace operator on a compact Lie group, we are interested in the case  $k=2,$ and in the case were $s\geq 2.$ In this case the estimates for $r_{s,k}(R)$ are essentially deduced from the Hardy-Littlewood-Ramanujan circle method. We record the following estimates of $r_{s,2}(R)$ for $s\geq 2,$ for our further analysis.
\begin{theorem}[Representing a number as sums of squares]\label{rsk:theorem}
 Let $R\in \mathbb{N}.$ For $k=2$ and $s\in \mathbb{N},$ with $s\geq 2,$ the following statements hold.
 \begin{itemize}
    \item Let $s=2.$ Then, for any $\varepsilon>0,$ there exists $C_\varepsilon>0,$ such that
    \begin{equation}\label{Gauss}
        r_{2,2}(R)\leq C_\varepsilon R^{\varepsilon},\,R\geq 1.
    \end{equation}
    \item Let $s=3.$ Then,
    \begin{equation}\label{Bateman3}
        r_{3,2}(R)=2\pi R^{\frac{1}{2}} \mathfrak{S}_3(R),\,\,|\mathfrak{S}_3(R)|\leq C|\log(4R)(\log(\log(4R)))|,\,\,R\geq e^e/4.
    \end{equation}
    \item Let $s=4.$ Then,
    \begin{equation}\label{Bateman:4}
        r_{4,2}(R)=\pi^2 R \mathfrak{S}_4(R),\,\, R\geq e^{e},\,\,\,\limsup_{n\rightarrow \infty}\frac{\mathfrak{S}_4(R)}{\log (\log(R))}=6\pi^{-2}e^{\gamma_{*}} ,
    \end{equation}where $$\gamma_{*}=0.57721566490153286060651209008240243104215933593992\cdots,$$ is the Euler constant. 
    \item For $s\geq 5,$ there are constants $c_1(s)$ and $c_{2}(s)$ such that 
    \begin{equation}\label{Wainger}
        c_{1}(s)R^{\frac{s-2}{2}}\leq r_{s,2}(R)\leq c_{2}(s)R^{\frac{s-2}{2}}.
    \end{equation}
\end{itemize} 
\end{theorem}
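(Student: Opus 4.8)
Since Theorem \ref{rsk:theorem} merely collects classical facts from the circle method, the plan is to treat each value of $s$ separately and, in each case, to reduce to an established result rather than to reprove it; for the history we refer to Pierce \cite{Pierce2019}, and for the original arguments to \cite{Hardy1920,HardyLittlewood1920,HardyLittlewood1924,Bateman1951,Kloosterman1926}. For $s=2$ the plan is to pass to the ring of Gaussian integers: unique factorisation in $\mathbb{Z}[i]$ yields the classical formula that expresses the number of lattice representations of $R$ as an ordered sum of two squares by $4\sum_{d\mid R}\chi_{-4}(d)$, where $\chi_{-4}$ is the non-principal character modulo $4$; restricting to positive components changes this only by a bounded factor and by the contribution of the axes, so that $r_{2,2}(R)\leq 4\,d(R)$ with $d(\cdot)$ the divisor function, and the elementary divisor bound $d(R)=O_\varepsilon(R^\varepsilon)$ gives \eqref{Gauss}.

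For $s=3$ and $s=4$ I would invoke Bateman \cite{Bateman1951}, whose sharpening of the Hardy--Littlewood--Ramanujan method supplies both the exact shapes $r_{3,2}(R)=2\pi R^{1/2}\mathfrak{S}_3(R)$ and $r_{4,2}(R)=\pi^2 R\,\mathfrak{S}_4(R)$ and the precise control on the singular series. For $s=4$ an alternative route starts from Jacobi's four--square identity (a consequence of the theory of theta functions), which writes $r_{4,2}(R)$, up to the usual bounded factor, in terms of $\sum_{d\mid R,\,4\nmid d}d$; the iterated--logarithm behaviour in \eqref{Bateman:4}, including the appearance of the Euler constant $\gamma_*$, is then exactly the Gr\"onwall--type asymptotic $\limsup_{R\to\infty}\sigma(R)/(R\log\log R)=e^{\gamma_*}$ for the sum--of--divisors function. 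For $s=3$ the size of $\mathfrak{S}_3(R)$ is governed by the class number $h(-4R)$ (equivalently the value $L(1,\chi_{-4R})$) together with the $2$--adic density, via Gauss' theorem on sums of three squares, and the bound $h(-4R)\ll R^{1/2}\log R$ together with the contribution of the small primes yields the factor $\log(4R)\log\log(4R)$ in \eqref{Bateman3}.

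For $s\geq 5$ the plan is the basic Hardy--Littlewood circle method. Writing $r_{s,2}(R)=\int_{0}^{1}\theta(\alpha)^{s}e^{-2\pi iR\alpha}\,d\alpha$ with $\theta(\alpha)=\sum_{1\leq n\leq\sqrt R}e^{2\pi in^{2}\alpha}$, one splits $[0,1)$ into major arcs around rationals of small denominator and the complementary minor arcs $\mathfrak{m}$. On the major arcs the integral contributes a main term of the form $c_{s}\,\mathfrak{S}_s(R)\,R^{s/2-1}$, where $c_s>0$ is an explicit constant (a Gamma--factor from the singular integral) and $\mathfrak{S}_s(R)=\prod_{p}\beta_{p}(R)$ is an absolutely convergent Euler product of local densities. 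On the minor arcs Weyl's inequality for the quadratic exponential sum gives the pointwise bound $\sup_{\alpha\in\mathfrak{m}}|\theta(\alpha)|\ll R^{1/2-\delta}$ for some $\delta>0$, and combined with the elementary estimate $\int_0^1|\theta(\alpha)|^4\,d\alpha\ll R\log R$ this makes the minor--arc contribution $O(R^{s/2-1-\delta(s-4)}\log R)$, which is negligible once $s\geq 5$. The key point is that for $s\geq 5$ each local density $\beta_p(R)$ lies in a fixed interval $[\beta^{-},\beta^{+}]$ with $\beta^{-}>0$ independent of $p$ and $R$, and $\sum_p|\beta_p(R)-1|$ is bounded uniformly in $R$, so that $c_1(s)\leq\mathfrak{S}_s(R)\leq c_2(s)$ for positive constants depending only on $s$; together with the main term this yields \eqref{Wainger} for all sufficiently large $R$.

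The genuine difficulty, in each case, is expected to lie not in the upper bounds -- which follow from the divisor bound, Weyl's inequality and elementary $L^4$ estimates -- but in the \emph{uniform lower} bounds and in pinning down the exact constants. For $s\geq 5$ one must show that the singular series never degenerates, i.e.\ that $R$ is locally solvable at every prime with density bounded below; for $s=3$ this is where Gauss' three--square theorem and the lower bound for $h(-4R)$ (ultimately Siegel's ineffective estimate) enter; and for $s=4$ it is the determination of the precise constant $6\pi^{-2}e^{\gamma_*}$ through Gr\"onwall's theorem. These are exactly the points at which we prefer to quote the classical literature rather than to give self-contained proofs.
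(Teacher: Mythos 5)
Your proposal is correct and matches the paper's treatment: the paper gives no proof of Theorem \ref{rsk:theorem} but only a subsequent remark citing the classical literature (Mahler for \eqref{Gauss}, Bateman for \eqref{Bateman3}--\eqref{Bateman:4}, Wainger and Iwaniec--Kowalski for \eqref{Wainger}), and you reduce each case to those same established results, supplementing them with a sketch of the underlying divisor-bound, singular-series, and circle-method arguments. The only minor caveat worth noting is that the lower bound in \eqref{Wainger} can only be asserted for $R$ sufficiently large when $r_{s,2}$ counts solutions in positive integers, a point you correctly flag with the qualifier ``for all sufficiently large $R$'' even though the paper states it without that restriction.
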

 \begin{remark} For \eqref{Gauss} and a historical conjecture about the asymptotic behaviour of the terms $r_{k,k}(R)$ (the {\it  Conjecture K}) we refer the reader to Mahler \cite{Mahler1935} as well as the discussion there about the circle method.  For \eqref{Bateman3} we refer the reader to Bateman \cite[Page 84]{Bateman1951} and for   \eqref{Bateman:4} we refer the reader to Bateman \cite[Page 79]{Bateman1951}. For the historical aspects about the estimate in \eqref{Wainger} we refer the reader to Wainger \cite{Wainger2021}. In general,  the circle method establishes  that for $s$  large enough with respect to $k,$ one has the expression 
 \begin{equation}\label{ExpresionHLR}
     r_{s,k}(R)=\frac{\Gamma(1+k/s)}{\Gamma(s/k)}\mathfrak{S}_s(R)+O(R^{s/k-1-\delta}),
 \end{equation}where $\delta>0,$ and $\mathfrak{S}_s(R)$ being a singular series. In particular, for $s>2^k,$ the precise statement about \eqref{ExpresionHLR} can be found in \cite[Page 456]{IwaniecKowalski2004}. 
\end{remark}

\section{Reverse Strichartz estimates}

In this section, we analyse the Strichartz estimates on compact connected simple Lie groups. It was mentioned in the introduction, we exclude the case  $\ell=1$  since $\textnormal{Spin(3)}\cong \textnormal{SU}(2)\cong \mathbb{S}^3$ and the Strichartz estimates in the case of the spheres have been analysed in the work of Burq, Gerard, and Tzvetkov \cite{BurqGerardTzvetkov2004}.  For this, we need several preparatory useful lemmas. We start our analysis with the following result due to Sogge and H\"ormander about the $L^p$-norm for the spectral projections of the Laplacian, see \cite{Sogge1988}.
\begin{lemma}[Sogge-H\"ormander]\label{lq:l2:estimate} Let $G$ be a compact Lie group.
Let $p\in[2,\infty]$ and $P_{\lambda_\nu}:L^2(G)\rightarrow H_{\lambda_\nu}$ be the orthogonal projection onto the eigenspace $H_{\lambda_\nu}$ corresponding to $\lambda_\nu$. Then 
\begin{equation}\label{Sogge:Hormander}
    \left|\left|P_{\lambda_\nu}u_0\right|\right|_{L^p(G)}\leq C \sqrt{\lambda_\nu}^{\sigma(p)}\left|\left|P_{\lambda_\nu}u_0\right|\right|_{L^2(G)},
\end{equation}
where
\begin{equation*}
  \sigma(p):=\left\{\begin{array}{lll}
     \frac{d-1}{2}\left(\frac{1}{2}-\frac{1}{p}\right),&& \textnormal{if}\ 2\leq p\leq \frac{2(d+1)}{d-1},\\
     \\
    \frac{d-1}{2}-\frac{d}{p},&& \textnormal{if}\ \frac{2(d+1)}{d-1}<p\leq \infty.
     \end{array}\right.
\end{equation*} and with the constant $C$ in \eqref{Sogge:Hormander} depending on geometric data of $G.$ 
\end{lemma}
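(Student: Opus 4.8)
The estimate \eqref{Sogge:Hormander} is the classical spectral cluster bound of Sogge, with the endpoint $p=\infty$ (sharp pointwise Weyl law) going back to H\"ormander; the plan is simply to reproduce Sogge's argument \cite{Sogge1988} on the Riemannian manifold $(G,g)$ of dimension $d$, the group structure being irrelevant. Write $\mathcal{L}=\mathcal{L}_G$, fix the eigenfunction $f:=P_{\lambda_\nu}u_0$, and set $\lambda:=\sqrt{\lambda_\nu}$. Choose $\rho\in\mathcal{S}(\mathbb{R})$, real valued, with $\rho(0)=1$ and $\widehat{\rho}$ supported in $(-\varepsilon_0,\varepsilon_0)$, where $\varepsilon_0$ is below the injectivity radius of $(G,g)$. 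Since $\sqrt{\mathcal{L}}f=\lambda f$ we have $\rho(\lambda-\sqrt{\mathcal{L}})f=f$, so
\begin{equation*}
  \Vert f\Vert_{L^p(G)}=\Vert\rho(\lambda-\sqrt{\mathcal{L}})f\Vert_{L^p(G)}\leq\Vert\rho(\lambda-\sqrt{\mathcal{L}})\Vert_{L^2(G)\to L^p(G)}\,\Vert f\Vert_{L^2(G)},
\end{equation*}
and it suffices to prove the operator bound $\Vert\rho(\lambda-\sqrt{\mathcal{L}})\Vert_{L^2\to L^p}\lesssim\lambda^{\sigma(p)}$ for $\lambda\geq 1$.

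The second step is to write $\rho(\lambda-\sqrt{\mathcal{L}})=\frac{1}{2\pi}\int_{\mathbb{R}}\widehat{\rho}(t)e^{it\lambda}e^{-it\sqrt{\mathcal{L}}}\,dt$, so that only the short-time half-wave group enters. For $|t|<\varepsilon_0$ one inserts the Hadamard parametrix for $e^{-it\sqrt{\mathcal{L}}}$, whose Schwartz kernel is, in geodesic coordinates, a finite sum of oscillatory integrals with phase $\langle x-y,\xi\rangle-t|\xi|_g$ (up to lower order terms) plus a smoothing remainder; integrating in $t$ by stationary and nonstationary phase produces the kernel $K_\lambda(x,y)$ of $\rho(\lambda-\sqrt{\mathcal{L}})$ together with the pointwise bounds
\begin{equation*}
  |K_\lambda(x,y)|\lesssim\lambda^{\frac{d-1}{2}}\bigl(1+\lambda\operatorname{dist}_g(x,y)\bigr)^{-\frac{d-1}{2}}\quad\text{for }\operatorname{dist}_g(x,y)\leq\varepsilon_0/2,
\end{equation*}
and $|K_\lambda(x,y)|\lesssim_N\lambda^{-N}$ otherwise (finite propagation speed). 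A $TT^*$ step reduces matters to an $L^{p'}\to L^p$ bound, since by self-adjointness $\Vert\rho(\lambda-\sqrt{\mathcal{L}})\Vert_{L^2\to L^p}^2=\Vert\rho(\lambda-\sqrt{\mathcal{L}})^2\Vert_{L^{p'}\to L^p}$ and $\rho(\lambda-\sqrt{\mathcal{L}})^2$ is an operator of the same type. Splitting $K_\lambda$ into the near-diagonal piece $\operatorname{dist}_g(x,y)\lesssim\lambda^{-1}$, where the crude bound $\Vert K_\lambda\Vert_{L^\infty}\lesssim\lambda^{(d-1)/2}$ together with the $O(\lambda^{-d})$ volume of the support gives an acceptable $L^{p'}\to L^p$ estimate, and the dyadic annuli $\operatorname{dist}_g(x,y)\sim 2^j\lambda^{-1}$ with $1\leq 2^j\lesssim\lambda$, and then summing, yields $\sigma(p)=\frac{d-1}{2}(\frac12-\frac1p)$ precisely in the range $2\leq p\leq\frac{2(d+1)}{d-1}$. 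For larger $p$ one interpolates the trivial bound $\sigma(\infty)=\frac{d-1}{2}$ (from $\Vert K_\lambda\Vert_{L^\infty}\lesssim\lambda^{(d-1)/2}$) against the critical exponent, obtaining $\sigma(p)=\frac{d-1}{2}-\frac{d}{p}$ for $\frac{2(d+1)}{d-1}<p\leq\infty$.

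I expect the genuine obstacle to be the estimate on the dyadic annuli: there one must control, uniformly in $j$, an oscillatory integral operator whose phase is the geodesic distance $\operatorname{dist}_g(x,y)$, and the decay in $j$ making the sum converge up to the Stein--Tomas exponent $\frac{2(d+1)}{d-1}$ is exactly the sharp Carleson--Sj\"olin (variable-coefficient $L^2$-restriction) estimate. This is the one place where Riemannian geometry is genuinely used: the symbol $|\xi|_g$ is elliptic with strictly convex level sets, so the phase satisfies the required nondegeneracy (the Hessian of $\xi\mapsto|\xi|_g$ restricted to a sphere has rank $d-1$, equivalently the co-sphere bundle has non-vanishing Gaussian curvature). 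Everything else --- the parametrix construction, the stationary-phase bookkeeping, the $TT^*$ reduction, the dyadic summation, and the interpolation for $p>\frac{2(d+1)}{d-1}$ --- is routine; and since all constants originate from a fixed coordinate patch and the injectivity radius, the resulting constant depends only on the geometry of $(G,g)$, as claimed.
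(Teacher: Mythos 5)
The paper does not supply a proof of this lemma; it is quoted from Sogge \cite{Sogge1988} as a black box, so there is no in-paper argument to compare against. Your plan of reproducing Sogge's original argument on the Riemannian manifold $(G,g)$ --- reduction to $\rho(\lambda-\sqrt{\mathcal{L}})$, Hadamard parametrix for the half-wave group, $TT^*$, dyadic decomposition around the diagonal, Carleson--Sj\"olin/oscillatory-integral bounds for the annuli, and interpolation past the Stein--Tomas exponent --- is exactly the right skeleton, and the group structure is indeed irrelevant.

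There is, however, a concrete error in the parametrix kernel bound which then propagates. The kernel of $\rho(\lambda-\sqrt{\mathcal{L}})$ satisfies
\begin{equation*}
|K_\lambda(x,y)|\lesssim\lambda^{\,d-1}\bigl(1+\lambda\operatorname{dist}_g(x,y)\bigr)^{-\frac{d-1}{2}},
\end{equation*}
not $\lambda^{(d-1)/2}(1+\lambda\operatorname{dist}_g(x,y))^{-(d-1)/2}$ as you wrote. Already in the model case $G=\mathbb{R}^d$ one has $K_\lambda(0)=(2\pi)^{-d}\int\rho(\lambda-|\xi|)\,d\xi\asymp\lambda^{d-1}$, and away from the diagonal stationary phase gives $\lambda^{(d-1)/2}\operatorname{dist}_g(x,y)^{-(d-1)/2}$, which is larger by a factor $\lambda^{(d-1)/2}$ than your bound in the range $\operatorname{dist}_g\gtrsim\lambda^{-1}$. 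With the correct bound the near-diagonal Young/Schur estimate becomes $\lambda^{d-1}\cdot\lambda^{-2d/p}$, which is $\lesssim\lambda^{2\sigma(p)}$ exactly when $p\leq\frac{2(d+1)}{d-1}$ --- so the near-diagonal piece is genuinely tight at the Stein--Tomas exponent, whereas your value made it look unconditionally harmless. The same slip recurs at the $p=\infty$ endpoint: the claim ``$\sigma(\infty)=\frac{d-1}{2}$ from $\|K_\lambda\|_{L^\infty}\lesssim\lambda^{(d-1)/2}$'' is doubly off, since the sup of the kernel is $\lambda^{d-1}$ and in any case a kernel sup only controls $L^1\to L^\infty$; the $L^2\to L^\infty$ bound $\lambda^{(d-1)/2}$ follows from $\sup_x\|K_\lambda(x,\cdot)\|_{L^2(G)}\lesssim\lambda^{(d-1)/2}$, equivalently the local Weyl law $\chi_\lambda(x,x)\lesssim\lambda^{d-1}$. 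The exponents $\sigma(p)$ you assert at the end are nevertheless the correct ones --- they are what the corrected kernel bound yields --- but the argument as written does not produce them.
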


The following lemma will be crucial in our further analysis. We provide some crude Strichartz estimates that for a range of values of $(p,q)$ that could be non-sharp with respect to the values provided by Burq, Gerard and Tzvetkov \cite{BurqGerardTzvetkov2004}, specially if the pair $(p,q)$ is admissible. However, in the next sections we will clarify, for which values of $(p,q)$ the following lemma provides improvements of the regularity orders in \cite{BurqGerardTzvetkov2004}, in our setting.  We observe that these improvements are with respect to the established regularity orders for the Lie groups that we analyse below, but not in the setting of the manifolds considered in \cite{BurqGerardTzvetkov2004}, where their result is sharp on any manifold having closed geodesics with a common period. As it was mentioned in the introduction, we conclude that the geometry of the manifold has a strong incidence in the improvement of Strichartz estimates.

\begin{lemma}\label{Main:Lemma} Let $p\in [2,\infty),$ $q\in[2,\infty],$ and let $b$ be as in Table 1. Let $T=2\pi b \gamma.$ Then 
\begin{equation*}\label{main:estimative}
\left|\left|u\right|\right|_{L^q(G,L^p[0,T])}\lesssim\left[\sum\limits_{R=0}^\infty N_RR^{2(\alpha+\textcolor{red}{\frac{\sigma(q)}{2})}}\left|\left|P_{\frac{R}{b\gamma}}u_0\right|\right|^2_{L^2(G)}\right]^{\frac{1}{2}},
\end{equation*}
where, for any $R\in \mathbb{N},$ $N_R=\#\left\{\nu:\lambda_\nu=\frac{R}{b\gamma}\right\}$ is the number of solutions of the equation $\lambda_\nu=\frac{R}{b\gamma},$ and $\alpha=1/2-1/p$ is a loss of regularity order of the $L^p$-estimate with respect to the $L^2$-norm in the time variable. Moreover, for $p=2,$ we have the identity
\begin{equation}\label{Ort}
\forall x\in G,\, ||u(\cdot,x)||_{L^2([0,T])}=\left[\sum \limits_{R=0}^{\infty}\ 2TN_R\left|P_{\frac{R}{b\gamma}}u_0(x)\right|^2\right]^{\frac{1}{2}}.
\end{equation} 
\end{lemma}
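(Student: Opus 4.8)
The plan is to expand $u(t,x)=e^{it\mathcal{L}_G}u_0$ in the eigenbasis, group the eigenfunctions according to the value of the eigenvalue $\lambda_\nu$, and then exploit the fact that the time interval $[0,T]$ with $T=2\pi b\gamma$ is exactly a period for the relevant time-frequency factors. Concretely, writing $u_0=\sum_{\nu} P_{\lambda_\nu}u_0$ we have
\begin{equation*}
  u(t,x)=\sum_{\nu} e^{it\lambda_\nu}\,P_{\lambda_\nu}u_0(x)
  =\sum_{R=0}^{\infty} e^{it\frac{R}{b\gamma}}\Big(\sum_{\lambda_\nu=\frac{R}{b\gamma}} P_{\lambda_\nu}u_0(x)\Big),
\end{equation*}
using that by Theorem \ref{general:spectrum:theorem} every eigenvalue has the form $\lambda_\nu=\frac{1}{b\gamma}(n_1^2+\cdots+n_m^2-R_0)$, so the distinct eigenvalues are indexed by integers $\frac{R}{b\gamma}$ with $R\in\mathbb{Z}$ (shifted by $R_0$), and the inner sum is a finite sum over the $N_R$ lattice points producing that value. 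Since $t\mapsto e^{it\frac{R}{b\gamma}}$ has period $2\pi b\gamma=T$ in a way that makes the family $\{e^{it R/(b\gamma)}\}_R$ orthogonal on $[0,T]$, for fixed $x$ the function $t\mapsto u(t,x)$ is (after relabelling) a $2\pi$-periodic trigonometric series in the variable $t/(b\gamma)$.

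For the $p=2$ identity \eqref{Ort} I would argue as follows. Fix $x\in G$. Because the exponentials $e^{it\frac{R}{b\gamma}}$, $R\in\mathbb{Z}$, are pairwise orthogonal on $[0,T]$ with $\int_0^T |e^{it R/(b\gamma)}|^2\,dt=T$, Parseval on $[0,T]$ gives
\begin{equation*}
  \|u(\cdot,x)\|_{L^2([0,T])}^2=T\sum_{R} \Big|\sum_{\lambda_\nu=\frac{R}{b\gamma}} P_{\lambda_\nu}u_0(x)\Big|^2.
\end{equation*}
Here I must be slightly careful: if several genuinely distinct eigenvalues happened to collapse to the same residue they would be collected, but as written the sum over $R$ runs over the actual attained values $\frac{R}{b\gamma}$ and $P_{\frac{R}{b\gamma}}u_0$ denotes the projection onto the \emph{whole} eigenspace for that eigenvalue; the multiplicity $N_R$ counts the lattice representations, not the dimension, so one should simply identify $\sum_{\lambda_\nu=R/(b\gamma)}P_{\lambda_\nu}u_0 = P_{\frac{R}{b\gamma}}u_0$. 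That yields $\|u(\cdot,x)\|_{L^2([0,T])}^2=T\sum_R |P_{\frac{R}{b\gamma}}u_0(x)|^2$, and then the factor $2N_R$ in the statement comes from the intended bookkeeping convention in which each eigenvalue is counted with the number of lattice representations and the factor $2$ absorbs the $\pm$ sign symmetry $n_j\mapsto -n_j$ in the quadratic form; the cleanest route is to state the Parseval identity first and then re-index the frequency sum using $N_R$, checking the constant on a model case. Taking square roots gives \eqref{Ort}.

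For the main inequality I would interpolate/bound the $L^p_t$ norm by the $L^2_t$ norm with the loss $\alpha=\frac12-\frac1p$. The key point is Bourgain-type control of the $\ell^p$ or $L^p$ norm of a trigonometric polynomial with frequencies in an arithmetic progression of length $O(N_R)$: each frequency block $e^{it R/(b\gamma)} P_{\frac{R}{b\gamma}}u_0(x)$ contributes, and by H\"older in $t$ together with the fact that there are $N_R$ frequencies clustered at $\frac{R}{b\gamma}\sim \frac{R}{b\gamma}$ one pays a factor $N_R^{1/2}$ and a weight $(\sqrt{\lambda_\nu})^{2\alpha}\sim (R/(b\gamma))^{\alpha}$ from passing $|t|^{?}$... more precisely, one first writes $\|u\|_{L^q_x L^p_t}\le \|u\|_{L^q_x L^2_t}$-type bound is false in general, so instead I would: (i) use the $p=2$ computation above to handle the $t$-variable up to the loss $R^{2\alpha}$ coming from a Bernstein/Sobolev-in-time estimate on each frequency cluster (a cluster of $N_R$ frequencies near $R/(b\gamma)$ behaves, after rescaling, like frequencies of size $\lesssim R$, costing $R^{\alpha}$ in $L^p_t$ versus $L^2_t$), then (ii) apply the Sogge--H\"ormander spectral projection bound of Lemma \ref{lq:l2:estimate}, $\|P_{\frac{R}{b\gamma}}u_0\|_{L^q(G)}\lesssim \sqrt{\lambda_\nu}^{\sigma(q)}\|P_{\frac{R}{b\gamma}}u_0\|_{L^2(G)}\sim (R/(b\gamma))^{\sigma(q)/2}\|P_{\frac{R}{b\gamma}}u_0\|_{L^2(G)}$, to move from $L^q_x$ to $L^2_x$, which produces the $R^{\sigma(q)}$ (i.e.\ the $R^{2\cdot\sigma(q)/2}$) weight, and (iii) use an $\ell^2$ triangle inequality / almost-orthogonality over $R$ together with the bound $\#\{\nu:\lambda_\nu=R/(b\gamma)\}=N_R$ to assemble the square-function on the right-hand side, the $N_R$ entering precisely because collapsing $N_R$ unit vectors into one block costs $N_R$ in an $L^2$-square sum. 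The main obstacle is step (i): justifying cleanly the $R^{\alpha}$-loss when replacing $L^p_t$ by $L^2_t$ on each arithmetic-progression frequency cluster, i.e.\ getting the right power of $R$ from a time-side Bernstein inequality for exponential sums with frequencies in $\{R/(b\gamma): R\}$ — this is where one has to be careful that the interval $[0,T]$ is a full period so no boundary losses occur, and where the exponent $2(\alpha+\sigma(q)/2)$ in the statement is pinned down; the remaining steps are the bookkeeping of $N_R$ and a direct application of Lemma \ref{lq:l2:estimate}.
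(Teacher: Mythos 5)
Your high-level architecture matches the paper's: expand $u$ in the eigenbasis, exploit that on $[0,T]$ with $T=2\pi b\gamma$ the phases $e^{itR/(b\gamma)}$ are orthogonal (which gives the $p=2$ identity by Parseval), pass from $L^p_t$ to $L^2_t$ at the cost of a power $R^\alpha$, swap the $L^q_x$-norm with the $R$-sum, and finally invoke the Sogge--H\"ormander bound from Lemma~\ref{lq:l2:estimate} to replace $\|P_{R/(b\gamma)}u_0\|_{L^q(G)}$ by $(R/(b\gamma))^{\sigma(q)/2}\|P_{R/(b\gamma)}u_0\|_{L^2(G)}$. The swap you describe as ``$\ell^2$ triangle inequality / almost-orthogonality'' is in the paper just Minkowski's integral inequality, applicable because $q/2\geq 1$; that part is fine. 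And you are right to be suspicious of the $2N_R$ factor in \eqref{Ort}: a clean Parseval on $[0,T]$ gives $\|u(\cdot,x)\|_{L^2([0,T])}^2 = T\sum_R |P_{R/(b\gamma)}u_0(x)|^2$ with no $N_R$; the paper produces the extra $N_R$ (and the factor $2$) by summing $P_{\lambda_\nu}$ over all $\nu$ with $\lambda_\nu = R/(b\gamma)$, which is either an over-counting convention or a bookkeeping slip, so your hesitation is well founded.

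The genuine gap is your step~(i), the mechanism behind the $R^\alpha$ loss from $L^p_t$ to $L^2_t$. You attribute it to ``Bernstein on a cluster of $N_R$ frequencies near $R/(b\gamma)$,'' but there is no such cluster: every $\nu$ with $\lambda_\nu = R/(b\gamma)$ contributes the \emph{same} frequency $R/(b\gamma)$, so the ``cluster'' is a single exponential $e^{itR/(b\gamma)}$ of unit modulus, for which $L^p$ and $L^2$ norms on a fixed interval agree up to a constant and Bernstein gives you nothing. The loss is a global phenomenon, not a per-block one. What the paper uses is the Wainger--Sobolev embedding for periodic functions: after rescaling $[0,T]\to[0,2\pi]$ by $t=b\gamma\tau$, one has $\bigl\|\sum_{R} a_R e^{iR\tau}\bigr\|_{L^p([0,2\pi])}\lesssim\bigl\|\sum_{R} R^{\alpha}a_R e^{iR\tau}\bigr\|_{L^2([0,2\pi])}$ with $\alpha = \tfrac12 - \tfrac1p$ (i.e.\ $H^{1/2-1/p}(\mathbb{T})\hookrightarrow L^p(\mathbb{T})$), applied to $a_R = \sum_{\nu:\,b\gamma\lambda_\nu=R}P_{\lambda_\nu}u_0(x)$ for each fixed $x$, and then undoing the rescaling. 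That one-line Sobolev embedding replaces your step~(i) entirely; without it, your argument as written does not produce the factor $R^{2\alpha}$.
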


\begin{proof}[Proof of Lemma \ref{Main:Lemma}] Since $u_0\in C^{\infty}(G)$, the solution of the linear Schr\"odinger equation is given by $$\forall t\in [0,T],\,\forall x\in G,\,\,u(t,x)=\displaystyle\sum\limits_{\nu}e^{i\lambda_{\nu}t}P_{\lambda_{\nu}}u_0(x).$$
In order to estimate the $L^p([0,T])$-norm of $u$, we shall use the Wainger-Sobolev embedding theorem, which states that for $1<r<p<\infty,$ we have that
\begin{equation*}\label{Weinger:Sobolev}
    \left|\left|\sum\limits_{n\in\mathbb{Z}}a_ne^{itn}\right|\right|_{L^p([0,2\pi])}\lesssim\left|\left|\sum_{n\in \mathbb{Z}}|n|^{\alpha}a_{n}e^{itn}\right|\right|_{L^r([0,2\pi])},\  \alpha=\frac{1}{r}-\frac{1}{p}.
\end{equation*}
In particular, when $r=2,$ and for any fixed  $x\in G,$  let us consider the function
$$   f(t)= \sum_{R=0}^\infty a_{R}e^{i tR} ,$$ defined by the Fourier coefficients
$$ a_{R}=\sum_{\nu\in \mathbb{N}^n: b\gamma\lambda_\nu=R }P_{\lambda_\nu}u_{0}(x),$$ or in other words, defined by
$$  f(t)= \sum_{R=0}^\infty a_{R}e^{i tR} =\sum_{\nu} e^{itb\gamma\lambda_\nu}P_{\lambda_\nu}u_{0}(x). $$
Then the Wainger Sobolev embedding theorem implies that
\begin{align*}\label{Brian}
    \left|\left|\sum\limits_{\nu}e^{i\lambda_{\nu}b\gamma \tau }P_{\lambda_{\nu}}u_0(x)\right|\right|_{L^p([0,2\pi])}&= \left|\left|\sum\limits_{R=0}^{\infty}a_Re^{iR\tau}\right|\right|_{L^p([0,2\pi])} \\
    & \lesssim\left|\left|\sum\limits_{R=0}^\infty R^{\alpha} a_R e^{i R \tau }\right|\right|_{L^2([0,2\pi])}\\
    &=\left|\left|\sum\limits_{\nu}e^{i\lambda_{\nu}b\gamma \tau }(\lambda_{\nu}b\gamma )^{\alpha}P_{\lambda_{\nu}}u_0(x)\right|\right|_{L^2([0,2\pi])}.
\end{align*}
 To estimate the $L^p$-norm $||u(t,x)||_{L^p([0,2\pi b\gamma])},$ let us apply  the change of variables $t\mapsto \tau,$  $t:=b\gamma\tau, $ and the estimate $L^p$-$L^2$-estimate above as follows,
\begin{align*}
  ||u(t,x)||_{L^p([0,2\pi b\gamma])} &=\left|\left|\sum\limits_{\nu}e^{i\lambda_{\nu}t}P_{\lambda_{\nu}}u_0(x)\right|\right|_{L^p([0,2\pi b\gamma])}\\
  &=(b\gamma)^{\frac{1}{p}}\left|\left|\sum\limits_{\nu}e^{i\lambda_{\nu}b\gamma \tau }P_{\lambda_{\nu}}u_0(x)\right|\right|_{L^p([0,2\pi])}  \\
  &\lesssim (b\gamma)^{\frac{1}{p}}\left|\left|\sum\limits_{\nu}e^{i\lambda_{\nu}b\gamma \tau }(\lambda_{\nu}b\gamma )^{\alpha}P_{\lambda_{\nu}}u_0(x)\right|\right|_{L^2([0,2\pi])} . 
\end{align*} The change of variables  $\tau \mapsto s,$  $s= b\gamma\tau,$ gives 
\begin{align*}
&(b\gamma)^{\frac{1}{p}}\left|\left|\sum\limits_{\nu}e^{i\lambda_{\nu}b\gamma \tau }(\lambda_{\nu}b\gamma )^{\alpha}P_{\lambda_{\nu}}u_0(x)\right|\right|_{L^2([0,2\pi])}\\
&=(b\gamma)^{\frac{1}{p}-\frac{1}{2}}\left|\left|\sum\limits_{\nu}e^{i\lambda_{\nu}s }(\lambda_{\nu}b\gamma )^{\alpha}P_{\lambda_{\nu}}u_0(x)\right|\right|_{L^2([0,2\pi b\gamma])}\\
    &=(b\gamma)^{\frac{1}{p}-\frac{1}{2}}(b\gamma)^{\alpha}\left|\left|\sum\limits_{\nu}e^{i\lambda_{\nu}s }\lambda_{\nu}^{\alpha}P_{\lambda_{\nu}}u_0(x)\right|\right|_{L^2([0,2\pi b\gamma])}\\
    &=\left|\left|\sum\limits_{\nu}e^{i\lambda_{\nu}s }\lambda_{\nu}^{\alpha}P_{\lambda_{\nu}}u_0(x)\right|\right|_{L^2([0,2\pi b\gamma])},
\end{align*}
where $\alpha=1/2-1/p.$ The last expression above can be computed as
follows:
\begin{align*}
    \left|\left|\sum\limits_{\nu}e^{i\lambda_{\nu}s}\lambda_\nu^\alpha P_{\lambda_\nu}u_0(x)\right|\right|^2_{L^2([0,T])}&=\int_0^T\left|\sum\limits_{\nu}e^{i\lambda_{\nu}s}\lambda_\nu^\alpha P_{\lambda_\nu}u_0(x)\right|^2 ds\\
    &=\int_0^T\left(\sum\limits_{\nu}e^{i\lambda_\nu s}\lambda_\nu^\alpha P_{\lambda_\nu}u_0(x)\right)\left(\sum\limits_{\nu'}e^{-i\lambda_{\nu'}s}\lambda_{\nu'}^\alpha \overline{P_{\lambda_{\nu'}}u_0(x)}\right)ds\\
    &=\int_0^T\left(\sum\limits_{\nu}\sum\limits_{\nu'}e^{i(\lambda_\nu-\lambda_{\nu'})s}(\lambda_\nu\lambda_{\nu'})^\alpha P_{\lambda_\nu}u_0(x)\overline{P_{\lambda_{\nu'}}u_0(x)}\right)ds\\
    &=\sum\limits_{\begin{subarray}{c}\nu,\nu'\\\lambda_\nu=\lambda_{\nu'}\end{subarray}}\left[\int_0^Te^{i(\lambda_\nu-\lambda_{\nu'})s}ds\right](\lambda_\nu\lambda_{\nu'})^\alpha P_{\lambda_\nu}u_0(x)\overline{P_{\lambda_{\nu'}}u_0(x)}\\
    &\ \ \ +\sum\limits_{\begin{subarray}{c}\nu,\nu'\\\lambda_\nu\neq\lambda_{\nu'}\end{subarray}}\left[\int_0^Te^{i(\lambda_\nu-\lambda_{\nu'})s}ds\right](\lambda_\nu\lambda_{\nu'})^\alpha P_{\lambda_\nu}u_0(x)\overline{P_{\lambda_{\nu'}}u_0(x)}.
\end{align*}
In the last equality, we can interchange the integration and summation symbols since $u_0\in C^{\infty}(G).$ Recalling that $T=2\pi b\gamma ,$  we have that $$\int_0^Te^{i(\lambda_\nu-\lambda_{\nu'})s}ds=\left\{\begin{array}{lll}
T,&&\textnormal{if}\ \lambda_\nu=\lambda_{\nu'},\\
0,&&\textnormal{if}\ \lambda_\nu\neq\lambda_{\nu'}.
\end{array}\right.$$ Hence, these orthogonality relations allow us to write the equalities
\begin{align*}
     \left|\left|\sum\limits_{\nu}e^{i\lambda_{\nu}s}\lambda_\nu^\alpha P_{\lambda_\nu}u_0(x)\right|\right|^2_{L^2([0,T])}&=\sum\limits_{\begin{subarray}{c}\nu,\nu'\\\lambda_\nu=\lambda_{\nu'}\end{subarray}}T(\lambda_\nu\lambda_{\nu'})^\alpha P_{\lambda_\nu}u_0(x)\overline{P_{\lambda_{\nu'}}u_0(x)}\\
     &=\sum\limits_{R=0}^{\infty}\ \sum\limits_{\nu:\lambda_\nu=
     \frac{R}{b\gamma}}\ \sum\limits_{\nu':\lambda_{\nu'}=\frac{R}{b  \gamma}} T\left(\frac{R}{b\gamma}\right)^{2\alpha}\left|P_{\frac{R}{b\gamma}}u_0(x)\right|^2\\
    &=\sum \limits_{R=0}^{\infty}\ 2T\left(\frac{R}{b\gamma}\right)^{2\alpha}\left|P_{\frac{R}{b\gamma}}u_0(x)\right|^2\sum\limits_{\nu:\lambda_\nu=\frac{R}{b\gamma}}1\\
    &=\sum \limits_{R=0}^{\infty}\ 2T\left(\frac{R}{b\gamma}\right)^{2\alpha}\left|P_{\frac{R}{b\gamma}}u_0(x)\right|^2N_R,
\end{align*}
where $$N_R:=\sum\limits_{\nu:\lambda_\nu=\frac{R}{b\gamma}}1=\#\left\{\nu:\lambda_\nu=\frac{R}{b\gamma}\right\}.$$ 
So far, we have shown that 
\begin{equation*}
 ||u(t,x)||^2_{L^p([0,T])}\lesssim \sum \limits_{R=0}^{\infty}\ 2T\left(\frac{R}{b\gamma}\right)^{2\alpha}\left|P_{\frac{R}{b\gamma}}u_0(x)\right|^2N_R,
\end{equation*}
or, equivalently,
\begin{equation*}
 ||u(t,x)||_{L^p([0,T])}\lesssim \left[\sum \limits_{R=0}^{\infty}\ 2T\left(\frac{R}{b\gamma}\right)^{2\alpha}\left|P_{\frac{R}{b\gamma}}u_0(x)\right|^2N_R\right]^{\frac{1}{2}},
\end{equation*} and also the following equality when $p=2,$
\begin{equation*}
 ||u(t,x)||_{L^2([0,T])}=\left[\sum \limits_{R=0}^{\infty}\ 2T\left|P_{\frac{R}{b\gamma}}u_0(x)\right|^2N_R\right]^{\frac{1}{2}}.
\end{equation*} 
By computing the $L^q(G)$-norm of $||u(t,x)||_{L^p([0,T])}$ we have that
\begin{align*}
    ||u(t,x)||_{L^q(G,L^p[0,T])}&=\left\{\int_{G}||u(t,x)||^q_{L^p([0,T])}dx\right\}^{\frac{1}{q}}\\
    &\lesssim \left\{\int_{G}\left[\sum \limits_{R=0}^{\infty}\ 2T\left(\frac{R}{b\gamma}\right)^{2\alpha}\left|P_{\frac{R}{b\gamma}}u_0(x)\right|^2N_R\right]^{\frac{q}{2}}dx\right\}^{\frac{1}{q}}\\
    &\leq \left\{\sum\limits_{R=0}^\infty\left[\int_{G}\left(2T\left(\frac{R}{b\gamma}\right)^{2\alpha}N_R\left|P_{\frac{R}{b\gamma}}u_0(x)\right|^2\right)^{\frac{q}{2}}dx\right]^{\frac{2}{q}}\right\}^{\frac{1}{2}},
\end{align*}
where the last inequality is nothing but the Minkowski integral inequality. Note that 
    \begin{align*}
   &\left\{\sum\limits_{R=0}^\infty\left[\int_{G}\left(2T\left(\frac{R}{b\gamma}\right)^{2\alpha}N_R\left|P_{\frac{R}{b\gamma}}u_0(x)\right|^2\right)^{\frac{q}{2}}dx\right]^{\frac{2}{q}}\right\}^{\frac{1}{2}}\\
   &=\left\{\sum_{R=0}^{\infty}2T\left(\frac{R}{b\gamma}\right)^{2\alpha}N_R\left[\int_{G}\left|P_{\frac{R}{b\gamma}}u_0(x)\right|^qdx\right]^{\frac{2}{q}}\right\}^{\frac{1}{2}}\\
    &=\left\{\sum_{R=0}^{\infty}2T\left(\frac{R}{b\gamma}\right)^{2\alpha}N_R\left|\left|P_{\frac{R}{b\gamma}}u_0\right|\right|^2_{L^q(G)}\right\}^{\frac{1}{2}},
\end{align*}
and, by Lemma \ref{lq:l2:estimate}, we know that $\left|\left|P_{\frac{R}{b\gamma}}u_0\right|\right|^2_{L^q(G)}\lesssim \left(\frac{R}{b\gamma}\right)^{\textcolor{red}{\sigma(q)}}\left|\left|P_{\frac{R}{b\gamma}}u_0\right|\right|^2_{L^2(G)}$ so that
\begin{align*}
    &\left\{\sum_{R=0}^{\infty}2T\left(\frac{R}{b\gamma}\right)^{2\alpha}N_R\left|\left|P_{\frac{R}{b\gamma}}u_0\right|\right|^2_{L^q(G)}\right\}^{\frac{1}{2}}\\
    &\lesssim\left\{\sum_{R=0}^{\infty}2T\left(\frac{R}{b\gamma}\right)^{2\alpha}N_R\left(\frac{R}{b\gamma}\right)^{\textcolor{red}{\sigma(q)}}\left|\left|P_{\frac{R}{b\gamma}}u_0\right|\right|^2_{L^2(G)}\right\}^{\frac{1}{2}}\\
    &=\frac{\sqrt{2T}}{(b\gamma)^{(\alpha+\textcolor{red}{\frac{\sigma(q)}{2}})}}\left[\sum\limits_{R=0}^\infty N_RR^{2(\alpha+\textcolor{red}{\frac{\sigma(q)}{2}})}\left|\left|P_{\frac{R}{b\gamma}}u_0\right|\right|^2_{L^2(G)}\right]^{\frac{1}{2}}.
\end{align*}
Hence
\begin{equation*}
    ||u(t,x)||_{L^q(G,L^p[0,T])}\lesssim \frac{\sqrt{2T}}{(b\gamma)^{(\alpha+\textcolor{red}{\frac{\sigma(q)}{2}})}}\left[\sum\limits_{R=0}^\infty N_RR^{2(\alpha+\textcolor{red}{\frac{\sigma(q)}{2}})}\left|\left|P_{\frac{R}{b\gamma}}u_0\right|\right|^2_{L^2(G)}\right]^{\frac{1}{2}},
\end{equation*}
and the proof is complete.
\end{proof}

\begin{theorem}[{Reverse Strichartz estimates for compact simple Lie groups}]\label{LemmaSpin2l:1} Let {$p\in[2,\infty)$ and $q\in[2,\infty].$} Let $G$ be a compact connected simple Lie group of rank $\ell\geq 2$, let $b$ as in Theorem \ref{general:spectrum:theorem}, and let $m$ be defined by $$m=\left\{\begin{array}{lll}
\ell+1, &&\textnormal{if}\ \tilde{G}\cong\textnormal{SU($\ell+1$)}\ \textnormal{or}\ \tilde{G}\cong G_2,\\
\ell, &&\textnormal{otherwise},
\end{array}\right.$$ where $\tilde{G}$ is the universal cover of $G$.
Let $T=2\pi b  \gamma.$ Then 
\begin{equation}\label{main:estimative:Spin2l:1}
\left|\left|u\right|\right|_{L^q(G,L^p[0,2\pi b  \gamma])}\lesssim_s  \Vert u_0\Vert_{H^{ s} (G) },
\end{equation} 
where $s\geq  \frac{m}{2}-\frac{2}{p}+\sigma(q)$ if $m\geq 5,$ $s>\frac{m}{2}-\frac{2}{p}+\sigma(q)$ if $m=2,3,4$ and 
\begin{equation*}
    \sigma(q):=\left\{\begin{array}{lll}
     \frac{d-1}{2}\left(\frac{1}{2}-\frac{1}{q}\right),&& \textnormal{if}\ 2\leq q\leq \frac{2(d+1)}{d-1},\\
     \\
     \frac{d-1}{2}-\frac{d}{q},&& \textnormal{if}\ \frac{2(d+1)}{d-1}<q\leq \textcolor{red}{\infty.}
     \end{array}\right.
\end{equation*} 
\end{theorem}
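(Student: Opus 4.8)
\noindent The plan is to derive \eqref{main:estimative:Spin2l:1} by feeding the explicit sum‑of‑squares description of $\mathrm{Spec}(G)$ from Theorem \ref{general:spectrum:theorem} and the circle‑method bounds for $r_{m,2}$ from Theorem \ref{rsk:theorem} into the abstract clustering inequality of Lemma \ref{Main:Lemma}. First I would recall that, since $\mathcal L_G$ has discrete spectrum, the Sobolev norm satisfies
\[
\Vert u_0\Vert_{H^s(G)}^2\;\asymp\;\sum_{R\geq 0}\Bigl(1+\tfrac{R}{b\gamma}\Bigr)^{s}\,\bigl\Vert P_{\frac{R}{b\gamma}}u_0\bigr\Vert_{L^2(G)}^{2},
\]
where the sum runs over those $R$ with $\tfrac{R}{b\gamma}\in\mathrm{Spec}(G)$. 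Comparing this with the right–hand side of Lemma \ref{Main:Lemma} (whose $R$–weight is $R^{2\alpha+\sigma(q)}$ with $\alpha=\tfrac12-\tfrac1p$), it suffices to establish the termwise bound
\[
N_R\,R^{\,1-\frac2p+\sigma(q)}\;\lesssim\;\Bigl(1+\tfrac{R}{b\gamma}\Bigr)^{s}
\]
for every admissible $R$; the finitely many small values of $R$ (including $R=0$, i.e.\ the zero eigenvalue and the constant projection) are harmless since there the left side is bounded, so for large $R$ the task reduces to showing $N_R\lesssim R^{\,s-1+\frac2p-\sigma(q)}$.

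Next I would bound the multiplicity $N_R=\#\{\nu\in\Omega:\lambda_\nu=\tfrac{R}{b\gamma}\}$. By Theorem \ref{general:spectrum:theorem}, writing $[n_1\ \cdots\ n_m]^T=A[\nu_1\ \cdots\ \nu_\ell]^T$ and $\lambda_\nu=\tfrac{1}{b\gamma}(n_1^2+\cdots+n_m^2-R_0)$, the equation $\lambda_\nu=\tfrac{R}{b\gamma}$ is equivalent to $n_1^2+\cdots+n_m^2=R+R_0$. Since $A$ has maximal rank $\ell$, the map $\nu\mapsto(n_1,\dots,n_m)$ is injective, hence
\[
N_R\;\leq\;\#\bigl\{(n_1,\dots,n_m)\in\mathbb Z^m:\ n_1^2+\cdots+n_m^2=R+R_0\bigr\}\;\lesssim_m\;r_{m,2}(R+R_0),
\]
where in the last step the contribution of signs and of vanishing coordinates is absorbed into the constant (the $\mathbb Z^m$–count differs from $r_{m,2}$ by at most a factor $2^m$ together with lower‑order sum‑of‑squares terms, which does not affect the order of growth). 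Applying Theorem \ref{rsk:theorem} then gives $N_R\lesssim_\varepsilon R^\varepsilon$ for $m=2$, $N_R\lesssim R^{1/2}\log(R)\log\log(R)$ for $m=3$, $N_R\lesssim R\log\log(R)$ for $m=4$, and the clean bound $N_R\lesssim R^{\frac m2-1}$ for $m\geq5$; in all of the low‑rank cases this is $\lesssim_\varepsilon R^{\frac m2-1+\varepsilon}$.

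It remains to match the exponents. The required inequality is $N_R\lesssim R^{\,s-1+\frac2p-\sigma(q)}$, and $s_0^R(p,q)=\tfrac m2-\tfrac2p+\sigma(q)$ makes the right‑hand exponent equal to $\tfrac m2-1$. For $m\geq5$ this coincides exactly with the sharp bound $N_R\lesssim R^{\frac m2-1}$, so \eqref{main:estimative:Spin2l:1} holds for all $s\geq s_0^R(p,q)$. For $m\in\{2,3,4\}$ we only have $N_R\lesssim_\varepsilon R^{\frac m2-1+\varepsilon}$, which forces $s\geq s_0^R(p,q)+\varepsilon$; letting $\varepsilon\to0^+$ yields \eqref{main:estimative:Spin2l:1} for every $s>s_0^R(p,q)$, the logarithmic and $\varepsilon$–losses being exactly what prevents reaching the endpoint. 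Summing the termwise estimate against $\bigl\Vert P_{\frac{R}{b\gamma}}u_0\bigr\Vert_{L^2(G)}^2$ and taking square roots then gives the claimed bound by $\Vert u_0\Vert_{H^s(G)}$.

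I expect the genuinely delicate point to be the passage from eigenvalue multiplicities to lattice points on spheres: one must use the full‑rank structure of the matrix $A$ from Theorem \ref{general:spectrum:theorem} to see that $N_R$ is controlled by the representation function $r_{m,2}(R+R_0)$, and then invoke the correct Hardy–Littlewood–Ramanujan asymptotics in each regime of $m$. The remaining ingredients — the reduction through Lemma \ref{Main:Lemma}, the comparison with the $H^s$–norm, and the arithmetic of exponents — are routine; a secondary care point is that, because $A$ need not be onto $\mathbb Z^m$ and the $n_j$ need not be positive, the constants and lower‑order terms in $N_R\lesssim r_{m,2}(R+R_0)$ should be spelled out, though they do not change the order of growth in $R$.
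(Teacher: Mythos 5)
Your proposal is correct and follows essentially the same route as the paper: pass through Lemma \ref{Main:Lemma} to reduce to the multiplicity sequence $N_R$, use the injectivity of the rank-$\ell$ matrix $A$ from Theorem \ref{general:spectrum:theorem} to bound $N_R$ by the representation function $r_{m,2}(R+R_0)$, invoke Theorem \ref{rsk:theorem} to get $N_R\lesssim R^{m/2-1}$ (or $R^{m/2-1+\varepsilon}$ for $m=2,3,4$), and absorb the resulting $R$-weights into the dyadic characterization of $\Vert u_0\Vert_{H^s}$. Your termwise reformulation is a cosmetic repackaging of the paper's final chain of inequalities, and your extra caveat about signs and vanishing coordinates of the $n_j$ (the $\mathbb Z^m$ vs.\ positive-integer count) is a fair point of care that the paper glosses over but which, as you note, does not change the order of growth.
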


\begin{proof} Let $G$ be a compact connected simple Lie group of rank $\ell\geq 2$. 
By Theorem \ref{general:spectrum:theorem}, there exist integers $b,R_0$ and an integer matrix of rank $\ell$ such that every eigenvalue of the Laplacian $\mathcal{L}_{G}$ can be written as
\begin{equation}\label{spectrum:spinl:2}
   \lambda_{\nu}= \frac{1}{b\gamma}\left(n_1^2+\cdots+n_m^2-R_0\right), 
\end{equation}
where $[n_1\ \cdots\ n_m]^T=A[\nu_1\ \cdots \nu_\ell]^T$ for some $(\nu_1,...,\nu_\ell)\in\mathbb{N}^\ell.$ On the other hand,  in view of Lemma \ref{Main:Lemma} we have the following estimate
\begin{equation}\label{To:estimate:Bl}
\left|\left|u\right|\right|_{L^q(G,L^p[0,2\pi b  \gamma])}\lesssim\left[\sum\limits_{R=0}^\infty N_RR^{2(\alpha+\textcolor{red}{\frac{\sigma(q)}{2}})}\left|\left|P_{\frac{R}{b\gamma}}u_0\right|\right|^2_{L^2(G)}\right]^{\frac{1}{2}},
\end{equation}
with 
$$N_R=\#\left\{\nu:\lambda_\nu=\frac{R}{b\gamma}\right\}$$ and $\alpha=1/2-1/p$. To estimate $N_R$ observe that since $A$ has maximal rank equal to $\ell$, then the linear map $T_A:\nu\mapsto A\nu,$ from $\mathbb{R}^\ell$ to $\mathbb{R}^m$ is injective. 
Define the exceptional set 
\begin{equation}
   \mathscr{N}_R= \{Y=(n_i)_{1\leq i\leq m}\in \textnormal{Im}(T_A): n_1^2+\cdots+n_m^2=R+R_0,\,\,\textnormal{ and } A\nu=Y \Longrightarrow \nu \in\mathbb{N}^\ell\}.
\end{equation}Since there is a bijective correspondence between elements $Y=(n_1,n_2,\cdots ,n_m)\in \mathscr{N}_R$ with solutions of the equation $\lambda_{\nu}=\frac{R}{b\gamma},$ that indeed is provided by the identity $A\nu=Y$, we have that
\begin{equation}
    N_R=\#\left\{\nu:\lambda_\nu=\frac{R}{b\gamma}\right\}=\# \mathscr{N}_R\leq r_{m,2}(R+R_0).
\end{equation} 
Recalling Theorem \ref{rsk:theorem} we obtain the following inequalities for any $\varepsilon>0$:
\begin{align*}
        \bullet\ r_{2,2}(R+R_0)\lesssim_\varepsilon&(R+R_0)^\varepsilon\lesssim_\varepsilon R^\epsilon,\\
        \bullet\ r_{3,2}(R+R_0)\lesssim \ &(R+R_0)^{\frac{1}{2}}[\log(4(R+R_0))(\log(\log(4(R+R_0)))]\\
        \lesssim_\varepsilon&(R+R_0)^{\frac{1}{2}+\varepsilon}\\
        \lesssim_\varepsilon&R^{\frac{1}{2}+\varepsilon},\\
        \bullet\ r_{4,2}(R+R_0)\lesssim\ &(R+R_0)\log(\log(R+R_0))\lesssim_\varepsilon (R+R_0)^{1+\varepsilon}\lesssim_\varepsilon R^{1+\varepsilon},\\
        \bullet\ r_{m,2}(R+R_0)\lesssim\ &(R+R_0)^{\frac{m-2}{2}}\lesssim R^{\frac{m-2}{2}}
        \end{align*}
or, equivalently, $r_{m,2}(R+R_0)\lesssim R^{\frac{m-2}{2}+\varepsilon}$ for all $\varepsilon>0$ if $m=2,3,4$ and for all $\varepsilon\geq 0$  if $m\geq 5.$

Now, we can estimate the right hand side of \eqref{To:estimate:Bl} as follows
\begin{align*}
   & \left[\sum\limits_{R=0}^\infty N_RR^{2(\alpha+\textcolor{red}{\frac{\sigma(q)}{2}})}\left|\left|P_{\frac{R}{b\gamma}}u_0\right|\right|^2_{L^2(G) }\right]^{\frac{1}{2}}\\
    &\lesssim \left[\sum\limits_{R=0}^\infty R^{2\left(\frac{\varepsilon}{2}+\frac{m-2}{4}+\alpha+\textcolor{red}{\frac{\sigma(q)}{2}}\right)}\left|\left|P_{\frac{R}{b\gamma}}u_0\right|\right|^2_{L^2(G)}\right]^{\frac{1}{2}}\\
    &\asymp \Vert u_0\Vert_{H^{\varepsilon+\frac{m-2}{2}+2\alpha+\sigma(q)  }}.
\end{align*}    Observing that $\alpha=1/2-1/p,$ and that 
\begin{equation}
    \frac{m-2}{2}+2\alpha+\sigma(q)=\frac{m}{2}-\frac{2}{p}+\sigma(q)= \frac{m}{2}-\frac{2}{p}+\sigma(q),
\end{equation}
we have proved \eqref{main:estimative:Spin2l:1}. The proof is complete.
\end{proof}
\begin{remark} Note that $m$ always can be written in terms of the dimension $d$ of $G.$ For instance, when $G=\textnormal{Spin}(2\ell+1)$ we have that $d=\ell(2\ell+1),$ thus
$$m=\ell=\frac{-1+\sqrt{1+8d}}{4},$$ and
the inequality for the regularity order $s$ in \eqref{main:estimative:Spin2l:1} can be written as  \begin{equation}
    s\geq \frac{\sqrt{8d+1}-1}{8}-\frac{2}{p}+\sigma(q).
\end{equation} In terms of the dimension $d=\ell(\ell+2)$ of $\textnormal{SU}(\ell+1),$ we can write the inequality for the regularity order in  \eqref{main:estimative:Spin2l:1}  as follows
\begin{equation}
    {s\geq  \frac{-1+\sqrt{1+d}+1}{2}-\frac{2}{p}+\sigma(q)=\frac{\sqrt{1+d}}{2}-\frac{2}{p}+\sigma(q).}
\end{equation}  As for the case of $\textnormal{Spin}(2\ell)$ whose  dimension is $d=\ell(2\ell-1),$ we have that
$$\ell=\frac{1+\sqrt{1+8d}}{4},$$ and
the inequality for the regularity order $s$ in \eqref{main:estimative:Spin2l:1} can be written as  \begin{equation}
    {s\geq \frac{1+\sqrt{8d+1}}{8}-\frac{2}{p}+\sigma(q).}
\end{equation}   Note that in the case of $\textnormal{Spin}(2\ell)$ which has  dimension $d=\ell(2\ell-1),$ we have that
$$\ell=\frac{1+\sqrt{1+8d}}{4},$$ and
the inequality for the regularity order $s$ in \eqref{main:estimative:Spin2l:1} can be written as  \begin{equation}
    {s\geq \frac{1+\sqrt{8d+1}}{8}-\frac{2}{p}+\sigma(q).}
\end{equation}  Also, in terms of the dimension $d=\ell(2\ell+1)$ of $\textnormal{Sp}(\ell),$ we have that
$$ \ell= \frac{-1+\sqrt{1+8d}}{4}, $$ and then, we
can write the inequality for the regularity order in  \eqref{main:estimative:Spin2l:1}  as follows
\begin{equation}
    {s\geq \frac{-1+\sqrt{1+8d}}{8}-\frac{2}{p}+\sigma(q).}
 \end{equation}   
\end{remark}

Next, we observe that from the reverse Strichartz estimates in Theorem \ref{LemmaSpin2l:1} one can deduce classical Strichartz estimates on $G.$

\begin{corollary}[Strichartz estimates on compact Lie groups] Let { $2\leq q\leq p<\infty.$} Let $G$ be a compact connected simple Lie group of rank $\ell\geq 2$, let $b$ as in Theorem \ref{general:spectrum:theorem}, and let $m$ be defined by $$m=\left\{\begin{array}{lll}
\ell+1, &&\textnormal{if}\ \tilde{G}\cong\textnormal{SU($\ell+1$)}\ \textnormal{or}\ \tilde{G}\cong G_2,\\
\ell, &&\textnormal{otherwise},
\end{array}\right.$$ where $\tilde{G}$ is the universal cover of $G$.
Let $T=2\pi b  \gamma.$ Then 
\begin{equation}\label{corolary:th}
\left|\left|u\right|\right|_{L^p([0,2\pi\gamma b],L^q(G))}\lesssim_s  \Vert u_0\Vert_{H^{ s} (G) },
\end{equation} 
where $s\geq  \frac{m}{2}-\frac{2}{p}+\sigma(q)$ if $m\geq 5,$ $s>\frac{m}{2}-\frac{2}{p}+\sigma(q)$ if $m=2,3,4$ and 
\begin{equation*}
    \sigma(q):=\left\{\begin{array}{lll}
     \frac{d-1}{2}\left(\frac{1}{2}-\frac{1}{q}\right),&& \textnormal{if}\ 2\leq q\leq \frac{2(d+1)}{d-1},\\
     \\
     \frac{d-1}{2}-\frac{d}{q},&& \textnormal{if}\ \frac{2(d+1)}{d-1}<q\leq \textcolor{red}{\infty.}
     \end{array}\right.
\end{equation*} 
 \begin{proof} Let us use the Minkowski integral inequality with $p/q\geq 1.$ Indeed, observe that
 \begin{align*}
    \left|\left|u\right|\right|_{L^p([0,2\pi\gamma b],L^q(G))} &=\left(\smallint_{0}^{2\pi \gamma \beta}\Vert u(t,\cdot)\Vert^p_{L^q(G)} dt\right)^{1/p}=\left(\left(\smallint_{0}^{2\pi \gamma \beta}\Vert u(t,\cdot)\Vert^p_{L^q(G)}dt\right)^{q/p}\right)^{1/q} \\
    &= \left(\left(\smallint_{0}^{2\pi \gamma \beta}\left(\smallint_{G} |u(t,x)|^qdx\right)^{p/q}dt\right)^{q/p}\right)^{1/q}\\
    &\leq \left( \smallint_{G}\left(\smallint_{0}^{2\pi \gamma \beta} |u(t,x)|^{q\times p/q }dt\right)^{q/p} dx\right)^{1/q}\\
    &= \left( \smallint_{G}\Vert u(\cdot,x)\Vert^q_{L^p([0,2\pi \gamma b])}dx \right)^{1/q}=\left|\left|u\right|\right|_{L^q(G,L^p[0,2\pi b  \gamma])}.
 \end{align*}In view of the previous inequality, it follows from \eqref{main:estimative:Spin2l:1} the estimate in \eqref{corolary:th}. The proof is complete.     
 \end{proof}   
\end{corollary}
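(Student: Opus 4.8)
The plan is to deduce the classical (space-then-time) Strichartz estimate \eqref{corolary:th} directly from the reverse estimate \eqref{main:estimative:Spin2l:1} of Theorem \ref{LemmaSpin2l:1}, by exchanging the order in which the time and space integrations are carried out. The only analytic tool required is the Minkowski integral inequality, and the hypothesis $2\leq q\leq p<\infty$ is exactly what makes it applicable, since it guarantees $p/q\geq 1$.

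Concretely, I would first write the left-hand side of \eqref{corolary:th} as the iterated integral
\[
\left|\left|u\right|\right|_{L^p([0,2\pi\gamma b],L^q(G))}=\left(\int_{0}^{2\pi\gamma b}\left(\int_{G}|u(t,x)|^{q}\,dx\right)^{p/q}dt\right)^{1/p},
\]
then raise to the power $q$ so that the outer $t$-integral carries the exponent $p/q\geq 1$ and the inner $x$-integral carries the exponent $1$. Applying the Minkowski integral inequality with exponent $p/q$ pulls the $x$-integration outside, which after taking the $q$-th root gives
\[
\left|\left|u\right|\right|_{L^p([0,2\pi\gamma b],L^q(G))}\leq\left|\left|u\right|\right|_{L^q(G,L^p[0,2\pi b\gamma])}.
\]
It then remains only to invoke Theorem \ref{LemmaSpin2l:1}, whose hypotheses $p\in[2,\infty)$ and $q\in[2,\infty]$ are met here, to bound the right-hand side by $\|u_0\|_{H^{s}(G)}$ for every $s\geq\frac{m}{2}-\frac{2}{p}+\sigma(q)$ when $m\geq 5$, and every $s>\frac{m}{2}-\frac{2}{p}+\sigma(q)$ when $m\in\{2,3,4\}$. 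Chaining the two inequalities yields \eqref{corolary:th}.

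Since the argument is a one-step reduction, there is no real obstacle; the only point deserving care is the direction of the Minkowski inequality, i.e. ensuring the outer exponent $p$ dominates the inner exponent $q$. This is precisely why the extra restriction $q\leq p$ appears in the corollary but not in Theorem \ref{LemmaSpin2l:1}, where the roles of the two norms are interchanged.
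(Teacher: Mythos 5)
Your argument is correct and is exactly the proof given in the paper: both reduce the $L^p_tL^q_x$ norm to the $L^q_xL^p_t$ norm via Minkowski's integral inequality with exponent $p/q\geq 1$, and then invoke Theorem \ref{LemmaSpin2l:1}. You also correctly identify that the extra hypothesis $q\leq p$ is precisely what makes the Minkowski step go in the needed direction.
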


\subsection{Data statement} No data were obtained during the realization of this work.
\bibliographystyle{amsplain}

\end{document}